\title{Paired coalition in graphs}
\author{{\small  Mohammad Reza Samadzadeh$^{a}$\thanks{Corresponding author}, Doost Ali Mojdeh$^{b}$\thanks{Corresponding author}, Reza Nadimi$^{c}$}\\ {\small $^{a,b}$Department of
		Mathematics, Faculty of Mathematical Sciences}\\{\small University of Mazandaran, Babolsar, Iran}\\
 {\small $^{c}$Department of
		Computer science, Faculty of Mathematical Sciences}\\{\small University of Mazandaran, Babolsar, Iran}\\
	{\small $^a$m.samadzadeh02@umail.umz.ac.ir}\\ {\small $^b$damojdeh@umz.ac.ir} \\{\small $^c$nadimi@umz.ac.ir}}
\date{}
\newtheorem{theorem}{Theorem}[section]
\newtheorem{corollary}[theorem]{Corollary}
\newtheorem{lemma}[theorem]{Lemma}
\newtheorem{observation}[theorem]{Observation}
\newtheorem{proposition}[theorem]{Proposition}
\newtheorem{p}{Problem}
\newtheorem{con}{Conjecture}
\theoremstyle{definition}
\newtheorem{definition}[theorem]{Definition}
\theoremstyle{remark}
\newtheorem{rem}[theorem]{Remark}
\begin{document}
	
	\maketitle
	\begin{abstract}
		
		\noindent A paired  coalition in a graph $G=(V,E)$ consists of two disjoint sets of vertices $V_1$ and $V_2$, neither of which is a paired dominating set but whose union $V_1 \cup V_2$ is a paired dominating set. A paired coalition partition (abbreviated $pc$-partition) in a graph $G$ is a vertex partition $\pi= \lbrace V_1,V_2,\dots ,V_k \rbrace$ such that each set $V_i$ of $\pi$  is not a paired dominating set but forms a paired coalition with another set $V_j \in \pi$. The paired coalition graph $PCG(G,\pi) $ of the graph $G$ with the $pc$-partition $\pi$ of $G$, is the graph whose vertices correspond to the sets of $\pi$, and two vertices $V_i$ and $V_j$ are adjacent in  $PCG(G,\pi) $ if and only if their corresponding sets $V_i$ and $V_j$ form a paired coalition in $G$. In this paper, we  initiate the  study of  paired coalition partitions and paired coalition graphs. In particular, we determine the paired coalition number of paths and cycles, obtain some results on paired coalition partitions in trees and   characterize pair coalition graphs of paths, cycles and trees. We also characterize  triangle-free graphs $G$ of order $n$ with $PC(G)=n$ and unicyclic graphs $G$ of order $n$ with $PC(G)=n-2$.   	
\end{abstract}

	{\bf Keywords:} Paired coalition,  paired coalition partition, paired dominating set, paired coalition graph.\vspace{1mm}\\
	{\bf MSC 2020:} 05C69.
	\section{Introduction}
	All graphs considered in this paper are simple, finite and undirected.	Let $G=(V,E)$ denote a   graph of order $n$ with vertex set $V=V(G)$ and edge set $E=E(G)$. The {\em open
		neighborhood} of a vertex $v\in V$ is the set $N(v)=\lbrace u \lvert \lbrace u,v\rbrace \in E \rbrace$, and its {\em closed
		neighborhood} is the set $N[v]=N(v) \cup \lbrace v \rbrace$. Each vertex of $N(v)$ is called a {\em neighbor}
	of $v$, and  the cardinality of $N(v)$ is called the {\em degree} of $v$, denoted by $deg(v)$ or $deg_G (v)$. A
	vertex $v$ of degree $1$ is called a {\em pendant vertex} or {\em leaf}, and its neighbor is called a {\em support vertex}.  A vertex of degree $n-1$ is called a {\em full vertex} while a vertex of degree $0$ is called an {\em isolated vertex}. The {\em minimum} and the {\em maximum
		degree} of $G$ is denoted by $\delta (G)$ and $\Delta (G)$, respectively.  For a set $S$ of vertices of $G$, the subgraph induced by $S$ is denoted by  $G[S]$.
	For two  sets $X$ and $Y$ of vertices, let  $[X,Y]$ denote the set of edges between $X$ and $Y$.	
	If every vertex of $X$ is adjacent to every vertex of  $Y$, we say that $[X,Y]$ is {\em full}, while
	if there are no edges between them, we say that $[X,Y]$ is {\em empty}.
	A subset $V_i \subseteq V$ is called a {\em singleton set} if $\lvert V_i \rvert =1$, and is called a {\em non-singleton set} if $\lvert V_i \rvert \geq 2$.

	 We denote the  path, cycle, complete graph and	
	star of order $n$ by $P_n$, $C_n$, $K_n$ and $K_{1,n-1}$, respectively.
	A double star
	with respectively $p$ and $q$ leaves adjacent to each support vertex is denoted by $S_{p,q}$. If $G$ has a $u,v$-path, then the distance from $u$ to $v$, is the length of a shortest $u,v$-path. The {\em girth} of a graph $G$ is the length of its shortest cycle and is denoted by $g(G)$. The complete graph $K_3$ is called a {\em triangle}, and a graph is {\em triangle-free}
	if it has no $K_3$ as an induced subgraph  \cite{ref13}.
	In a tree $T$, a vertex is called a {\em strong support vertex} if it has at least two leaf neighbors.  A graph $G$ is unicyclic if it has exactly one cycle. 
	
	A set $S\subseteq V$  in a graph $G=(V,E)$ is called a {\em dominating set} if every vertex $v\in V$ is either an element of $S$ or is adjacent to an element of $S$. The minimum cardinality of a dominating set of $G$ is called domination number of $G$, denoted by $\gamma (G)$ \cite{ref10}. A {\em paired dominating set} of a graph $G = (V ,E)$
is a dominating set $S \subseteq V$ such that the induced subgraph $G[S]$ contains a perfect matching. 
The minimum cardinality of a paired dominating set of $G$ is called paired domination number of $G$, denoted by $\gamma_{pr} (G)$ \cite{ref14}.  
	
	The term {\em coalition} was introduced by   Haynes et al, \cite{ref6} and has been studied further in \cite{ref3,ref4,ref5,ref7,ref8} and elsewhere.  Some  variants of this concept have been investigated in papers such as \cite{ref1,ref9,ref2}. Alikhani et al, studied the concepts of {\em total coalition} and {\em connected coalition}  in papers \cite{ref1} and \cite{ref9}. Furthermore, The concept of {\em independent coalition} has been studied in \cite{ref2}.    
	In this paper, we inrtroduce the concept of {\em paired coalition}  and initiate the study of this concept. In what follows, we define the terms paired coalition, {\em paired coalition partition} and {\em paired coalition graph}.

	\begin{definition}\label{pc-def}
		Let $G=(V,E)$ be a graph. A paired coalition in a graph $G$ consists of  two disjoint sets of  vertices $V_1$ and $V_2$, neither of which is a paired dominating set but whose union $V_1\cup V_2$ is a paired dominating set. We say the sets $V_1$ and $V_2$ form a paired coalition, and are
		$pc$-partners.
		
	\end{definition}
	\begin{definition}\label{pcp-def}
		A paired coalition partition, abbreviated $pc$-partition, in a graph $G$ is a vertex partition
		$\pi=\{V_1,V_2,\dots ,V_k\}$ such that  every set $V_i$ of $\pi$ is not a paired dominating
		set but forms a paired coalition with another set $V_j$ in $\pi$.  The paired coalition number $PC(G)$ equals the maximum order $k$ of
		a $pc$-partition of $G$, and a $pc$-partition of $G$ having order $PC(G)$ is called a $PC(G)$-partition.
	\end{definition}
As we will see later, not every graph $G$ admits a $pc$-partition. In such situations we will say that $PC(G)=0$.
\begin{definition}
	Given a graph $G$ with a $pc$-partition $\pi =\lbrace V_1,V_2,\dots ,V_k\rbrace$ of $G$,
	the paired coalition graph $PCG(G,\pi)$ is the graph with $k$ vertices labeled $\lbrace V_1,V_2,\dots ,V_k\rbrace$,
	corresponding one-to-one with the elements of $\pi$, and two vertices $V_i$ and $V_j$ are
	adjacent in $PCG(G,\pi)$ if and only if the sets $V_i$ and $V_j$ are paired coalition partners in $\pi$,
	that is, neither $V_i$ nor $V_j$ is a paired dominating set of $G$, but $V_i \cup V_j$ is a paired dominating
	set of $G$.
\end{definition}
We will denote the paired coalition graphs of paths, cycles and trees by $PCP$-graphs, $PCC$-graphs and $PCT$-graphs, respectively.

This paper is organized as follows. Section $2$ is devoted to some preliminary results most of which are used in subsequent sections. In Section $3$, we determine  paired coalition number of paths and cycles. In Section $4$, we discuss the concept of paired coalition in trees focusing on perfect binary trees and trees with large paired coalition number. In Section $5$, we characterize triangle-free graphs  of order $n$ with paired coalition number $n$, and unicyclic graphs of order $n$ with paired coalition number $n-2$. Finally, we close the paper with some research problems.

\section{Preliminaries}
The following observations are immediate.
\begin{observation} \label{obs-full} 
	If $G$ is a graph of order $n\geq 2$ with at least one full vertex, then $PC(G)=n$.
\end{observation}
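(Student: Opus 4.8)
The plan is to prove the equality by establishing the two inequalities $PC(G)\le n$ and $PC(G)\ge n$ separately, with essentially all of the content in the latter. The upper bound is immediate: any $pc$-partition of $G$ is in particular a partition of the $n$-element vertex set $V$ into nonempty blocks, so it has at most $n$ parts; hence $PC(G)\le n$ whenever $G$ admits a $pc$-partition. It therefore suffices to exhibit one $pc$-partition of order exactly $n$.

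For the lower bound I would take $\pi$ to be the partition of $V$ into singletons, $\pi=\{\{u\}:u\in V\}$, which has order $n$, and verify that it satisfies Definition~\ref{pcp-def}. Let $v$ denote a full vertex of $G$. First, no block of $\pi$ is a paired dominating set, since every block is a singleton $\{u\}$ and the induced subgraph $G[\{u\}]$ is a single vertex, which admits no perfect matching. Second, I claim that for each $u\ne v$ the pair $\{v,u\}$ is a paired dominating set: because $v$ is full it is adjacent to all other $n-1$ vertices, so $\{v\}$ already dominates $V$ and hence so does $\{v,u\}$, while the edge $vu\in E$ is a perfect matching of $G[\{v,u\}]$.

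Combining these two facts shows that $\{v\}$ and $\{u\}$ form a paired coalition for every $u\ne v$: neither singleton is a paired dominating set, yet their union $\{v,u\}$ is. Consequently every block of $\pi$ has a $pc$-partner — the block $\{v\}$ partners with each of the remaining blocks, and each block $\{u\}$ with $u\ne v$ partners with $\{v\}$ — so $\pi$ is a $pc$-partition of order $n$, giving $PC(G)\ge n$. Together with the upper bound this yields $PC(G)=n$.

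I expect no serious obstacle here; the only points requiring care are the hypothesis $n\ge2$, which guarantees that the full vertex has at least one neighbor to be paired with so that the coalitions above actually exist, and the elementary observation that a one-vertex induced subgraph has no perfect matching, so that singletons are genuinely excluded from being paired dominating sets as the definition requires.
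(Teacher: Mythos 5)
Your proof is correct and is exactly the argument the paper intends: the authors state this as an immediate observation without proof, and the singleton partition in which the full vertex partners with every other vertex is the standard justification. No issues.
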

	\begin{observation} \label{obs-multi} 
	If $G$ is a  complete multipartite graph of order $n$, then  $PC(G)=n$.
	\end{observation}
	\begin{lemma} \label{l5}
	Let $G$ be a  graph of order $n$ with  $\delta(G)=1$. Further, let $x$ be a vertex of degree $1$, and let $y$ be the support vertex of $x$. Then, $PC(G)=n$ if and only if $y$ is a full vertex.
\end{lemma}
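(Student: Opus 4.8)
The plan is to translate the equality $PC(G)=n$ into a purely neighborhood-theoretic condition and then exploit the rigidity forced by the leaf $x$. Note first that $\delta(G)=1$ forces $n\ge 2$. The direction $(\Leftarrow)$ is then immediate: if $y$ is a full vertex, then $G$ contains a full vertex and has order $n\ge 2$, so Observation \ref{obs-full} gives $PC(G)=n$ at once. Hence essentially all the work lies in the direction $(\Rightarrow)$.

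For the forward direction I would begin with a reduction. A $pc$-partition of order $n$ must be the partition of $V(G)$ into $n$ singleton blocks, since a partition of an $n$-element set into $n$ nonempty parts has every part of size one. Each singleton $\{v\}$ induces $K_1$, which has no perfect matching, so no singleton is a paired dominating set; thus the ``not a paired dominating set'' requirement of Definition \ref{pcp-def} holds automatically. The only real content is the coalition requirement: each $\{v\}$ must partner with some $\{u\}$, i.e. $\{u,v\}$ must be a paired dominating set. Because $G[\{u,v\}]$ must contain a perfect matching, this forces $uv\in E$, and the domination property forces $N[u]\cup N[v]=V$. I would record the resulting equivalence explicitly: $PC(G)=n$ if and only if every vertex $v$ has a neighbor $u$ with $N[u]\cup N[v]=V$.

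With this equivalence in hand, the conclusion follows by applying it to the leaf $x$. Since $x$ has degree $1$, its unique neighbor is $y$, so the only possible partner for $\{x\}$ is $\{y\}$; the equivalence therefore forces $N[x]\cup N[y]=V$. But $N[x]=\{x,y\}\subseteq N[y]$, whence $N[x]\cup N[y]=N[y]$, and we conclude $N[y]=V$, that is, $\deg(y)=n-1$ and $y$ is a full vertex. I expect the main (indeed essentially the only) obstacle to be the reduction step, namely verifying that a size-two set is a paired dominating set exactly when the two vertices are adjacent and their closed neighborhoods cover $V$; once that equivalence is set up, the observation that a pendant vertex pins down its coalition partner uniquely, and thereby transfers the whole domination burden onto its support vertex, makes the rest short.
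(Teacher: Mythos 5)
Your proof is correct, and it follows the same overall strategy as the paper: reduce $PC(G)=n$ to the statement that the singleton partition is a $pc$-partition, and then use the leaf $x$ to force $\{y\}$ to be the unique possible partner of $\{x\}$. Where you diverge is in how fullness of $y$ is extracted. The paper argues that every \emph{other} singleton $\{v_i\}$ must have its partner in $\{\{x\},\{y\}\}$ (to dominate $x$), that $\{x\}$ cannot partner with any $\{v_i\}$ for lack of a perfect matching, and hence that $\{y\}$ must partner with all $n-2$ remaining singletons, which forces $y$ to be adjacent to everything. You instead apply the coalition requirement to the single block $\{x\}$: its partner must be $\{y\}$, so $\{x,y\}$ is a paired dominating set, so $N[x]\cup N[y]=V$, and since $N[x]\subseteq N[y]$ this already gives $N[y]=V$. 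Your route is slightly more economical, using the requirement on one block rather than on $n-2$ of them, and it isolates the clean intermediate characterization that $PC(G)=n$ holds exactly when every vertex has a neighbor whose closed neighborhood together with its own covers $V$; the paper's version, by contrast, makes explicit the fact (reused later in the paper) that $y$ must serve as the coalition partner of every other singleton. Both arguments are complete and correct.
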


\begin{proof}
	Let $V(G)=\lbrace x,y,v_1,v_2,\dots ,v_{n-2}\rbrace$.
	If $y$ is a full vertex, then Observation \ref{obs-full} implies that $PC(G)=n$. Conversely, assume that $PC(G)=n$. Consider the singleton partition $\pi_1 =\lbrace X,Y,V_1,V_2,\dots ,V_{n-2} \rbrace$, where $x\in X$, $y\in Y$, and $v_i \in V_i$, for each $i=1,2,\dots ,n-2$. Since $N(x)=\lbrace y\rbrace$, each set in $\pi_1 \setminus \lbrace X,Y\rbrace$ must have a $pc$-partner in $\lbrace X,Y\rbrace$. But $X$ can only form a paired coalition with $Y$, since for each $i=1,2,\dots ,n-2$, $G[X \cup V_i]$ has no perfect matching. Hence, each set in $\pi_1 \setminus \lbrace X,Y\rbrace$ must form a paired coalition with $Y$. Therefore, $y$ is a full vertex.
\end{proof}

\begin{lemma} \label{lem.sup1} 
	Let  $G $ be a graph with $\delta (G)=1$, and    the set of  support vertices $S$. If $\pi$ is a $pc$-partition of $G$, then for each pair $(A,B)$ of $pc$-partners in $G$, $S \subseteq A \cup B$.
\end{lemma}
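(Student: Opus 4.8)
The plan is to fix an arbitrary $pc$-partner pair $(A,B)$, set $D = A \cup B$, and exploit the fact that $D$ is a paired dominating set to show that every support vertex is forced into $D$. The whole argument is local: it examines a single leaf--support pair at a time and plays the domination requirement off against the perfect-matching requirement. Since $A \cup B$ is a paired dominating set by the definition of a paired coalition, $G[D]$ contains a perfect matching and $D$ dominates $V(G)$, and these are the only two facts I would need.

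First I would take an arbitrary support vertex $y \in S$ and fix one of its leaf neighbors $x$, so that $N(x) = \{y\}$. Because $D$ is a dominating set, $x$ must be dominated, which means either $x \in D$ or else a neighbor of $x$ lies in $D$. As $y$ is the unique neighbor of $x$, this reduces to the dichotomy $x \in D$ or $y \in D$.

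The key step is to resolve both cases in favor of $y \in D$. If $y \in D$ we are done for this vertex. If instead $y \notin D$, then domination of $x$ forces $x \in D$; but $G[D]$ admits a perfect matching, so $x$ must be matched to some neighbor inside $D$. Since the only neighbor of $x$ is $y$ and $y \notin D$, the vertex $x$ has no possible partner, contradicting the existence of the perfect matching. Hence $y \in D$ in every case. As $y$ was an arbitrary element of $S$, this gives $S \subseteq D = A \cup B$, as required.

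The one point I would flag as most easily overlooked—and what I regard as the crux rather than the domination bookkeeping—is the subcase in which the leaf itself, rather than its support vertex, absorbs the domination of the leaf. It is precisely the perfect-matching condition that rules this out: a leaf placed inside $D$ has nowhere to be matched except to its support vertex, so the support vertex is dragged into $D$ regardless. Without the matching requirement (that is, for ordinary domination) the conclusion would simply be false, so this is exactly where the ``paired'' hypothesis does the work.
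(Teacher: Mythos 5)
Your proof is correct and takes essentially the same route as the paper's: both reduce, via domination of the leaf $x$, to the dichotomy ``$x \in A\cup B$ or $y \in A\cup B$,'' and then use the perfect-matching requirement on $G[A\cup B]$ to kill the case where the leaf lies in $A\cup B$ but its support vertex does not (the leaf would be isolated in the induced subgraph). The only difference is presentational: the paper phrases it as a proof by contradiction, while you argue directly vertex by vertex.
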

\begin{proof}
	Suppose, to the contrary, that $S \nsubseteq A \cup B$, that is,  there exists a vertex $u\in S$ such that $u \notin A \cup B$. Let $v$ be the leaf attached to   $u$. if $v \notin A \cup B$, then $v$ is not dominated by $A \cup B$, a contradiction. Otherwise, $v$ is an isolated vertex in $G[A \cup B]$ which implies that $G[A\cup B]$ has no perfect matching, again a contradiction.
\end{proof}

\begin{lemma} \label{lem.sup2}
		Let $G $ be a graph with $\delta (G)=1$ and $PC(G)\geq 3$, and let $\pi$ be a $PC(G)$-partition. Then there exists a set in $\pi$  containing all support vertices of $G$.
\end{lemma}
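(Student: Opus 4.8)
The plan is to argue by contradiction, leaning on Lemma~\ref{lem.sup1}, which already guarantees that every pair of $pc$-partners jointly captures all of $S$. Let $S$ denote the set of support vertices of $G$, and suppose, to the contrary, that no single set of $\pi$ contains all of $S$. Since $\pi$ partitions $V(G)$, each support vertex lies in exactly one set of $\pi$, so our assumption means that $S$ meets at least two distinct sets of $\pi$. I would then split the argument according to how many sets of $\pi$ have nonempty intersection with $S$.

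First I would dispose of the case in which $S$ meets three or more sets. Pick any set $V_i \in \pi$ together with a $pc$-partner $V_j$, which exists because $\pi$ is a $pc$-partition. By Lemma~\ref{lem.sup1} we have $S \subseteq V_i \cup V_j$. But if $S$ contains support vertices lying in three pairwise distinct sets of $\pi$, then $S$ cannot be covered by the union of only two members of $\pi$, since the sets of $\pi$ are pairwise disjoint. This contradiction rules out the case of three or more sets meeting $S$.

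It remains to treat the case in which $S$ meets exactly two sets, say $V_a$ and $V_b$, so that $S \subseteq V_a \cup V_b$ with $S \cap V_a \neq \emptyset$ and $S \cap V_b \neq \emptyset$. Here the hypothesis $PC(G) \geq 3$ enters: because $\pi$ has at least three sets, there is a set $V_c \in \pi$ with $c \notin \{a,b\}$, and since $S \subseteq V_a \cup V_b$ and the sets of $\pi$ are disjoint, $V_c \cap S = \emptyset$. Let $V_d$ be a $pc$-partner of $V_c$. Applying Lemma~\ref{lem.sup1} to the pair $(V_c,V_d)$ gives $S \subseteq V_c \cup V_d$, and as $V_c$ contains no support vertex, this forces $S \subseteq V_d$. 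But then $V_d$ is a single set of $\pi$ containing all of $S$, contradicting our standing assumption; equivalently, to absorb the support vertices from both $V_a$ and $V_b$ the set $V_d$ would have to coincide with both of these distinct disjoint parts, which is impossible.

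Having reached a contradiction in both cases, I conclude that some set of $\pi$ contains all support vertices of $G$. The only delicate point is the bookkeeping in the two-set case: one must invoke $PC(G)\geq 3$ to produce a third set that is support-free, and then apply Lemma~\ref{lem.sup1} to \emph{that} set's partner; the three-or-more case is a direct pigeonhole argument on the disjointness of the parts.
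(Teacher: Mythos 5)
Your proof is correct and follows essentially the same route as the paper: both argue by contradiction, use Lemma~\ref{lem.sup1} to force $S$ into the union of any two $pc$-partners, and exploit $PC(G)\geq 3$ to produce a third set whose partner cannot absorb support vertices sitting in two distinct parts. Your explicit split into the ``three or more parts'' and ``exactly two parts'' cases is slightly more detailed bookkeeping than the paper's single-sentence argument, but the underlying idea is identical.
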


\begin{proof}
 Let $S$ be the set of support vertices of $G$.	If $|S|=1$, then the result is obtained. Hence, we may assume that  $|S|\ge 2$. Suppose, to the contrary, that there are two vertices  $u,v$ in $S$ such that $u$ and $v$ are contained in different members of $\pi$. Let $u \in A$ and $v \in B$, where $\lbrace A,B\rbrace \subset \pi$. Further, let $C$ be a set in $\pi$, such that $C \notin \lbrace A,B\rbrace$. Now Lemma \ref{lem.sup1} implies that $C$ has no $pc$-partner, a contradiction.
\end{proof}

Using Lemmas \ref{lem.sup1} and \ref{lem.sup2}, we obtain the following result.
\begin{corollary} \label{coro-all}
	Let $G$ be a graph with $\delta (G)=1$ and $PC(G)\geq 3$, and let $\pi$ be a $PC(G)$-partition. Further, let $V_s$ be the member of $\pi$ containing support vertices of $G$. Then for any paired $(A,B)$ of $pc$-partners in $G$, $V_s \in \lbrace A,B\rbrace$.
\end{corollary}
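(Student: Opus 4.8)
The plan is to deduce the corollary by combining Lemmas \ref{lem.sup1} and \ref{lem.sup2} with the single extra ingredient that the blocks of a partition are pairwise disjoint. Write $W$ for the set of all support vertices of $G$ (this is the object called $S$ in the statement of Lemma \ref{lem.sup1}, renamed here to avoid a clash with the block $S \in \pi$ named in the corollary). First I would observe that $W$ is nonempty: since $\delta(G)=1$, the graph has at least one vertex of degree $1$, and its neighbor is a support vertex, so $W \neq \emptyset$. This nonemptiness is exactly what makes the argument go through, so it is worth recording explicitly.

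Next I would unpack the hypotheses. Because $PC(G)\ge 3$ and $\pi$ is a $PC(G)$-partition, Lemma \ref{lem.sup2} guarantees a block $S \in \pi$ with $W \subseteq S$; this is the member referred to in the corollary. Now fix an arbitrary pair $(A,B)$ of $pc$-partners in $\pi$. Applying Lemma \ref{lem.sup1} to this pair yields $W \subseteq A \cup B$, i.e.\ every support vertex of $G$ lies in $A$ or in $B$.

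The conclusion is then immediate from disjointness. Choose any $w \in W$ (possible since $W \neq \emptyset$). On one hand $w \in S$, because $W \subseteq S$; on the other hand $w \in A \cup B$, so without loss of generality $w \in A$. Since $\pi$ is a partition, the block of $\pi$ containing $w$ is unique, and both $S$ and $A$ contain $w$; hence $S = A$. The same reasoning gives $S = B$ in the case $w \in B$. In either case $S \in \{A,B\}$, and as $(A,B)$ was an arbitrary pair of $pc$-partners, the claim follows.

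There is essentially no serious obstacle here: the statement is a clean corollary of the two lemmas, and the only point that requires a moment of care is confirming $W \neq \emptyset$ (so that a witness support vertex $w$ exists) together with the observation that partition blocks are disjoint, which together force the block $S$ to coincide with one of the two partners rather than merely intersecting their union.
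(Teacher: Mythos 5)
Your proof is correct and follows exactly the route the paper intends: the corollary is stated there as an immediate consequence of Lemmas \ref{lem.sup1} and \ref{lem.sup2}, and your argument simply fills in the details (nonemptiness of the set of support vertices plus disjointness of partition blocks forcing $S$ to coincide with $A$ or $B$). Nothing further is needed.
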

\begin{corollary} \label{pcg-1}
Let $G$ be a graph with $\delta (G)=1$, and let $\pi$ be a $pc$-partition
of $G$ of order $k$. Then $PCG(G,\pi) \simeq K_{k-1,1}$.
\end{corollary}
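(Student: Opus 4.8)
The plan is to show that $PCG(G,\pi)$ has a single distinguished vertex joined to every other vertex and no further edges, which is exactly the star $K_{1,k-1}\simeq K_{k-1,1}$. First I would observe that a $pc$-partition must have order $k\ge 2$, since in a partition of order $1$ the unique set has no other set with which to form a paired coalition. If $k=2$, then by the definition of a $pc$-partition the two sets must be $pc$-partners with each other, so $PCG(G,\pi)\simeq K_2=K_{1,1}=K_{k-1,1}$ and we are finished. So I would henceforth assume $k\ge 3$.

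The key step is to locate all support vertices of $G$ inside a single member of $\pi$. I would argue exactly as in Lemma \ref{lem.sup2}, noting that its proof uses only that $\pi$ has at least three members and that every member has a $pc$-partner, both of which hold for an arbitrary $pc$-partition of order $k\ge 3$. Concretely, if two support vertices $u,v$ lay in distinct sets $A,B\in\pi$, then choosing any third set $C$ and any candidate partner $D$ of $C$, Lemma \ref{lem.sup1} would force $\{u,v\}\subseteq C\cup D$; but $u\in A$ and $v\in B$ with $A,B,C$ pairwise distinct makes this impossible, since a set $D$ disjoint from the others can meet at most one of $A,B$. Hence $C$ would have no partner, a contradiction, and so there is a set $S\in\pi$ containing every support vertex.

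With $S$ fixed, I would combine this concentration with Lemma \ref{lem.sup1}. Since $\delta(G)=1$ there is at least one leaf, hence at least one support vertex, so the set $S_{\mathrm{supp}}$ of all support vertices is nonempty and satisfies $S_{\mathrm{supp}}\subseteq S$. For any $pc$-partner pair $(A,B)$ we have $S_{\mathrm{supp}}\subseteq A\cup B$; because the members of $\pi$ are pairwise disjoint and $S_{\mathrm{supp}}\subseteq S$, this forces $S\in\{A,B\}$. Thus every edge of $PCG(G,\pi)$ is incident with the vertex $v_S$ corresponding to $S$, which in particular rules out any edge between two vertices different from $v_S$.

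Finally I would verify that $v_S$ is adjacent to each of the remaining $k-1$ vertices. Every set $V_i\neq S$ has some $pc$-partner $V_j$ by the definition of a $pc$-partition; by the previous paragraph $S\in\{V_i,V_j\}$, and since $V_i\neq S$ we obtain $V_j=S$. Hence $v_S$ is joined to all other $k-1$ vertices while no two of those are adjacent, giving $PCG(G,\pi)\simeq K_{1,k-1}=K_{k-1,1}$. The one delicate point I would be most careful about is that $\pi$ here is an \emph{arbitrary} $pc$-partition rather than a maximum one, so Lemma \ref{lem.sup2} and Corollary \ref{coro-all} cannot be quoted verbatim; the remedy, as used above, is to notice that their arguments depend only on the order being at least $3$ and on every set having a $pc$-partner.
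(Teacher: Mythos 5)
Your proof is correct and follows essentially the route the paper intends: the corollary is presented there as an immediate consequence of Lemmas \ref{lem.sup1} and \ref{lem.sup2}, and your argument is precisely the fleshed-out version of that deduction. Your extra care in noting that Lemma \ref{lem.sup2} and Corollary \ref{coro-all} are stated for $PC(G)$-partitions with $PC(G)\ge 3$ while here $\pi$ is an arbitrary $pc$-partition of order $k$ (handling $k=2$ separately and observing that the lemmas' proofs use only order at least $3$ and the existence of partners) is a genuine point the paper glosses over, and you resolve it correctly.
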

\section{Paths and cycles}
In this section, we will determine the paired coalition number of  paths and cycles. We will also characterize the paired coalition graphs of paths and cycles.

\begin{theorem} \label{pc-path}  
	For the path $P_n$, 
	$$
	PC(P_n)= \begin{cases}
		0 & if \  n=1 \\
		2 & if \  n=2, 4 \\
		3 &   otherwise.
	\end{cases}
	$$ 
\end{theorem}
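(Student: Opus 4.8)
The plan is to prove the two bounds $PC(P_n)\le 3$ (for every $n\ge 3$) and $PC(P_n)\ge 3$ (for $n\ne 2,4$) separately, after disposing of the tiny cases $n=2,4$ by hand. Write $P_n=v_1v_2\cdots v_n$, so the support vertices are $v_2$ and $v_{n-1}$ (coinciding in $v_2$ when $n=3$). For $P_2$ the only candidate of maximum order is $\{\{v_1\},\{v_2\}\}$, which is a $pc$-partition, so $PC(P_2)=2$. For $P_4$ the support set $\{v_2,v_3\}$ is itself a paired dominating set; by Lemma \ref{lem.sup2} any $pc$-partition of order $\ge 3$ has a single part $S\supseteq\{v_2,v_3\}$, and to obtain two further nonempty parts from the remaining vertices $v_1,v_4$ one is forced to take $S=\{v_2,v_3\}$, contradicting that a part is never a paired dominating set. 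Hence $PC(P_4)\le 2$, and $\{\{v_1\},\{v_2,v_3,v_4\}\}$ shows $PC(P_4)=2$.

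For the upper bound when $n\ge 3$, suppose $\pi$ is a $pc$-partition with $k=|\pi|\ge 3$. Since $\delta(P_n)=1$, Corollary \ref{coro-all} yields a part $S$ containing all support vertices that lies in every coalition (so $PCG(P_n,\pi)\simeq K_{k-1,1}$ by Corollary \ref{pcg-1}), and $D_i:=S\cup V_i$ is a paired dominating set for each remaining part $V_1,\dots,V_{k-1}$. The core is a matching-forcing observation at a leaf: in $G[D_i]$ the support vertex $v_2$ must be matched, and its only neighbours are $v_1,v_3$, so $v_1\in D_i$ or $v_3\in D_i$ for every $i$. If $v_1\notin S$ and $v_3\notin S$, then each of $v_1,v_3$ lies in at most one part, so at most two indices $i$ can satisfy this, forcing $k-1\le 2$. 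The symmetric statement at the right leaf (using $v_{n-1},v_{n-2},v_n$) handles the case $v_n\notin S$ and $v_{n-2}\notin S$.

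The remaining possibility, that a leaf or its second neighbour already lies in $S$ at both ends, is the main obstacle. Here I would exploit the parity constraint that $|D_i|=|S|+|V_i|$ is even for all $i$, so all the $|V_i|$ share the parity of $|S|$; combined with propagating the forced matching edge inward (if $v_1\in S$ then $v_1v_2$ is forced in every $D_i$, which forces $v_3v_4$, and so on), one shows that the forcing either drives all of $V(P_n)$ into $S$, making $S$ paired dominating and contradicting that a part is never paired dominating, or again leaves only two surviving parts. This establishes $PC(P_n)\le 3$ for all $n\ge 3$, consistent with the value $2$ already found at $n=4$.

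It remains to exhibit a $pc$-partition of order $3$ for every $n\ne 2,4$. For odd $n\ge 3$ take $\pi=\{\{v_1\},\{v_n\},\{v_2,\dots,v_{n-1}\}\}$: the middle block induces $P_{n-2}$ on an odd number of vertices and so is not paired dominating, while $\{v_1,\dots,v_{n-1}\}$ and $\{v_2,\dots,v_n\}$ induce copies of $P_{n-1}$ of even order that dominate the opposite leaf, hence are paired dominating, and the two leaves cannot form a coalition with each other. For even $n$ the parity argument above forces all three parts to have even order, ruling out singletons; here I would take $S=\{v_2,v_{n-1}\}$ together with two even-order parts $B,C$ partitioning the rest, each dominating the middle segment $v_4,\dots,v_{n-3}$ and each supplying a matching partner ($v_1$ or $v_3$, respectively $v_{n-2}$ or $v_n$) for the two support vertices, these partners being split between $B$ and $C$. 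When $3\mid n$ the three residue classes modulo $3$ realise this; for the other even residues a short periodic middle pattern works, as in $\{\{v_2,v_5\},\{v_3,v_6\},\{v_1,v_4\}\}$ for $P_6$ and $\{\{v_2,v_7\},\{v_1,v_4,v_5,v_8\},\{v_3,v_6\}\}$ for $P_8$. Checking in each case that none of the three parts is paired dominating while the two designated unions are completes the lower bound, so that $PC(P_n)=3$ for all $n\ne 2,4$.
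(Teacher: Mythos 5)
Your overall architecture (use Lemma \ref{lem.sup2}/Corollary \ref{coro-all} to find a part $S$ containing both support vertices that participates in every coalition, then bound the number of possible partners of $S$) is the same as the paper's, and your small cases, your odd-$n$ construction, and your $P_6$, $P_8$ partitions are all correct. But the upper bound is not actually proved. Your leaf-forcing argument only closes the case $v_1\notin S$ and $v_3\notin S$ (and its mirror), and you explicitly defer the complementary case to a sketched ``parity plus propagation'' argument that does not work as stated. The propagation step ``if $v_1\in S$ then $v_1v_2$ is forced, which forces $v_3v_4$, and so on'' is only valid when $v_3\in S\cup V_i$; if $v_3\notin S$ the chain stops immediately and nothing is forced further along the path, so the claimed dichotomy (either all of $V(P_n)$ is absorbed into $S$ or only two parts survive) is an assertion, not a deduction. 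For a concrete instance your sketch does not handle, take $S=\{v_2,v_3,v_{n-2},v_{n-1}\}$ for large $n$: both ends are ``degenerate'' ($v_3\in S$ and $v_{n-2}\in S$), the parity constraint only says all $|V_i|$ are even, and no propagation occurs, yet one still has to rule out three partners of $S$. The paper closes exactly this gap by a different, global argument: if $S$ had three partners, then any two consecutive $S$-vertices along the path are at distance at most $3$ (otherwise three disjoint partners each meet a window of three consecutive non-$S$ vertices, and one of them acquires an isolated vertex in the union), hence $S$ dominates $P_n$; since $S$ is not a paired dominating set, $G[S]$ has no perfect matching, so $S$ contains a maximal run of odd length, and only the two parts holding the two vertices flanking that run can be partners. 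Some argument of this kind is needed; your outline does not supply it.

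A secondary, smaller gap is in the lower bound for even $n$: you give explicit partitions only for $P_6$ and $P_8$, the residue-classes-mod-$3$ construction only covers even $n$ divisible by $3$, and for the remaining even $n$ ($n=10,14,16,\dots$) you appeal to an unspecified ``periodic middle pattern.'' The paper avoids this by a single uniform construction for even $n\ge 8$, namely $\lbrace\lbrace v_1,v_2,v_7,v_8,\dots,v_n\rbrace,\lbrace v_3,v_4\rbrace,\lbrace v_5,v_6\rbrace\rbrace$, which you could adopt (or you need to write down and verify your periodic pattern for all even $n\ge 10$).
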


\begin{proof} 
By Definition \ref{pcp-def}, $PC(P_1)=0$ and $PC(P_2)=2$.  For the path $P_4 =(v_1,v_2,v_3,v_4)$,  it is easy to verify that $PC(P_4)\neq 4$, $PC(P_4)\neq 3$, and that the partition $\lbrace \lbrace v_1,v_2 \rbrace ,\lbrace v_3,v_4\rbrace \rbrace $  is a $pc$-partition of $P_4$. So, $PC(P_4)=2$.
	
	 Now consider the path $P_n$ with $V(P_n)=\lbrace v_1,v_2,\dots ,v_n\rbrace$ and $E(P_n)=\lbrace v_i v_{i+1} : 1\leq i\leq n-1\rbrace$, where $n \neq 1,2,4$.
	If $n \equiv 1 \pmod{2}$, then the partition $\lbrace \lbrace v_1\rbrace ,\lbrace v_2,\dots ,v_{n-1} \rbrace , \lbrace v_n \rbrace \rbrace$ is a $pc$-partition of $P_n$. Further, the partition $\lbrace \lbrace v_1,v_6\rbrace ,\lbrace v_2,v_5\rbrace ,\lbrace v_3,v_4\rbrace \rbrace$ is a $pc$-partition of $P_6$, and for the paths $P_n$, where $n\geq 8$, and $n \equiv 0 \pmod{2}$,  the partition $\lbrace \lbrace v_1,v_2,v_7,v_8,\dots ,v_n \rbrace ,\lbrace v_3,v_4\rbrace ,\lbrace v_5,v_6\rbrace \rbrace$ is a $pc$-partition of $P_n$. Hence, for each  $n\neq 1,2,4$,  $PC(P_n)\geq 3$.
	
	 To complete the proof, we show that $PC(P_n)\leq 3$, for each $n \geq 1$. The result is obvious for $n\leq 4$, so we may assume that $n\geq 5$. 
	Let $\pi$ be a $PC(P_n)$-partition. By Lemma \ref{lem.sup2}, there exists a member of $\pi$ (name $A$) that contains support vertices of $P_n$, that is, $\lbrace v_2,v_{n-1}\rbrace \subseteq A$. We show that $A$ forms a paired coalition with at most two other sets. 
	
	Suppose, to the contrary, that there exist three sets in $\pi$ (name $B$,$C$ and $D$) that  form a paired coalition with $A$. Let $v_i$, $v_j$, where $i < j$, be two vertices such that $v_i \in A$, $v_j \in A$ and for each $i < k < j$, $v_k \notin A$. First, we show that  $\lvert j-i\rvert \leq 3$. Suppose, to the contrary, that $\lvert j-i\rvert \geq 4$. Let $l$ be an arbitrary index such that $i+1 \leq l\leq j-3$.  Since $A \cup B$ is a dominating set, it follows that $B \cap \lbrace v_l, v_{l+1},v_{l+2}\rbrace \neq \emptyset$.
	
	 Using a similar argument, we have $C \cap \lbrace v_l, v_{l+1},v_{l+2}\rbrace \neq \emptyset$ and $D \cap \lbrace v_l, v_{l+1},v_{l+2}\rbrace \neq \emptyset$. Assume, without loss of generality, that $v_{l+1} \in B$.  Now $v_{l+1}$ is an isolated vertex in  $G[ A \cup B]$, implying that $G[A \cup B]$ does not have a perfect matching, a contradiction. Hence, $\lvert j-i\rvert \leq 3$. It follows that $A$ is a dominating set, and so has no perfect matching. Thus, $P_n [A]$ contains a connected component $C$ such that $\lvert V(C)\rvert \equiv 1 \pmod{2}$. Let $V(C)=\lbrace v_s,v_{s+1}, \dots ,v_t \rbrace$.  We show that $s\neq 1$. Suppose, to the contrary, that $s=1$. Let $v_{t+1} \in V_{t+1}$, where $V_{t+1} \in \pi$. Note that $A$ cannot be a $pc$-partner of any set in $\pi \setminus V_{t+1}$, and so by Corollary \ref{coro-all}, $PC(P_n) \leq 2$, a contradiction. Similarly, we can show that $t \neq n$. Now let $v_{s-1} \in V_{s-1}$ and $v_{t +1} \in V_{t+1}$, where $\lbrace V_{s-1},V_{t+1}\rbrace \subset \pi$. Note that $A$ cannot be a $pc$-partner of any set in $\pi \setminus \lbrace V_{s-1} ,V_{t+1}\rbrace$, which contradicts the hypothesis. Hence, $A$ forms a paired coalition with at most two other sets. Now  Corollary \ref{coro-all} implies that $PC(P_n)\leq 3$. This completes the proof.
\end{proof}

\begin{theorem}
	A graph $G$ is a $PCP$-graph if and only if $G \in \lbrace P_2,P_3\rbrace$.
\end{theorem}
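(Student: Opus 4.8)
The plan is to prove both directions by leaning on the two structural facts already established for paths. The crucial observation is that every path $P_n$ with $n\geq 2$ satisfies $\delta(P_n)=1$, so Corollary \ref{pcg-1} applies directly: for \emph{any} $pc$-partition $\pi$ of $P_n$ of order $k$, the paired coalition graph is forced to be a star, $PCG(P_n,\pi)\simeq K_{k-1,1}$. This reduces the entire problem to determining which orders $k$ can occur.

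For the ``only if'' direction, suppose $G$ is a $PCP$-graph, say $G=PCG(P_n,\pi)$ where $\pi$ has order $k$. Since in a $pc$-partition every set must have a partner we have $k\geq 2$, and by Theorem \ref{pc-path} any $pc$-partition of $P_n$ has order at most $PC(P_n)\leq 3$, so $k\leq 3$. Combining these bounds with Corollary \ref{pcg-1} leaves exactly two possibilities: $k=2$ gives $PCG(P_n,\pi)\simeq K_{1,1}\simeq P_2$, while $k=3$ gives $PCG(P_n,\pi)\simeq K_{2,1}\simeq P_3$. Hence $G\in\{P_2,P_3\}$.

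For the ``if'' direction I would simply exhibit witnessing paths and partitions. For $P_2=v_1v_2$, the singleton partition $\{\{v_1\},\{v_2\}\}$ has order $2$: neither singleton is a paired dominating set, but their union induces $P_2$ and is paired dominating, so $PCG\simeq K_{1,1}\simeq P_2$ (alternatively $P_4$ with $\{\{v_1,v_2\},\{v_3,v_4\}\}$ realizes $P_2$). For $P_3=v_1v_2v_3$, since $PC(P_3)=3$ by Theorem \ref{pc-path}, the singleton partition $\{\{v_1\},\{v_2\},\{v_3\}\}$ has order $3$; here $\{v_2\}$ forms a paired coalition with each of $\{v_1\}$ and $\{v_3\}$, while $\{v_1\}\cup\{v_3\}$ induces no edge and so is not paired dominating, yielding $PCG\simeq K_{2,1}\simeq P_3$. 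Thus both $P_2$ and $P_3$ are realized.

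The argument carries essentially no obstacle once Corollary \ref{pcg-1} and Theorem \ref{pc-path} are in hand: the ``only if'' direction is immediate assembly of the star structure with the bound $k\leq 3$. The only point requiring genuine care is the sufficiency direction, where one must verify that the exhibited partitions are bona fide $pc$-partitions, that is, that each part fails to be a paired dominating set on its own yet forms a paired coalition with another part; these verifications are the routine checks described above.
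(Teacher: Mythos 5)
Your proof is correct and follows essentially the same route as the paper: necessity comes from combining the bound $k\le PC(P_n)\le 3$ of Theorem \ref{pc-path} with the star structure forced by Corollary \ref{pcg-1}, and sufficiency from exhibiting explicit $pc$-partitions (the paper uses $P_4$ and $P_5$ as witnesses where you use $P_2$ and $P_3$ themselves, but the verification is the same routine check). Your version is marginally more direct in that invoking Corollary \ref{pcg-1} up front makes the detour through $\Delta(PCG)\le 2$ and the exclusion of $K_3$ unnecessary.
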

\begin{proof}
	Consider the $PC(P_4)$-partition $\pi =\lbrace \lbrace v_1,v_2\rbrace, \lbrace v_3,v_4\rbrace \rbrace$, where $P_4 =(v_1,v_2,v_3,v_4)$.  We observe that $PCG(P_4,\pi) \simeq P_2$. Now consider  the $PC(P_5)$-partition $\pi = \lbrace \lbrace v_1\rbrace ,\lbrace v_2,v_3,v_4\rbrace ,\lbrace v_5\rbrace \rbrace$. We observe that $PCG(P_5,\pi)\simeq P_3$.  Conversely, let $G$ be a $PCP$-graph of order $n$. By Theorem \ref{pc-path}, we have $2 \leq  n \leq 3$. Now applying Corollary \ref{pcg-1}, we deduce that $G \in \lbrace P_2,P_3\rbrace$.
\end{proof}
\begin{theorem} \label{pc-cycle}
	For the cycle $C_n$,
	$$
	PC(C_n)= \begin{cases}
		4 & if \  n \equiv 0 \pmod{4} \\
		3 &    otherwise.
	\end{cases}
	$$
\end{theorem}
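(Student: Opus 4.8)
The plan is to combine explicit constructions for the lower bounds with a structural bound on the coalition graph for the upper bound. Throughout I will use the characterization of paired dominating sets of a cycle: a set $S\subseteq V(C_n)$ is a paired dominating set if and only if, when $S$ and its complement are written as unions of maximal arcs, every maximal arc of $S$ has even length and every maximal run of $V(C_n)\setminus S$ (a ``gap'') has length at most $2$. A short counting argument on the alternating arcs and gaps then shows that the paired domination number satisfies $\gamma_{pr}(C_n)\ge n/2$, with equality precisely when $4\mid n$ (this also follows from the known value $2\lceil n/4\rceil$). This inequality, together with the equality case, is the only metric fact I will need.

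For the lower bounds I would exhibit $pc$-partitions of the stated orders. When $4\mid n$, the four residue classes modulo $4$, namely $V_j=\{v_i : i\equiv j \pmod 4\}$, form a $pc$-partition of order $4$: each class is independent and hence not a paired dominating set, while $V_1\cup V_2$ and $V_3\cup V_4$ are the two ``all dominoes'' paired dominating sets. When $n$ is odd I would take $A=V\setminus\{v_1,v_2\}$, $B=\{v_1\}$, $C=\{v_2\}$; here $A\cup B=V\setminus\{v_2\}$ and $A\cup C=V\setminus\{v_1\}$ are single arcs of even length $n-1$, hence paired dominating sets, whereas $A$ (an odd arc) and the two singletons are not. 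When $n\equiv 2\pmod 4$ I would use the cyclic labelling $(AABB)^{(n-2)/4}CC$, which one checks makes every pairwise union a paired dominating set while no single class is one. This gives $PC(C_n)\ge 3$ for every $n\not\equiv 0 \pmod 4$ and $PC(C_n)\ge 4$ when $4\mid n$.

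The heart of the argument is the upper bound, and the key claim is that in any $pc$-partition $\pi$ every set has at most two $pc$-partners, i.e. $\Delta(PCG(C_n,\pi))\le 2$. To prove this I fix a set $A\in\pi$ that is not a paired dominating set and produce a $2$-element set $\{x,y\}\subseteq V\setminus A$ that every partner of $A$ must meet. If $A$ fails to dominate, pick an undominated vertex $u$ with neighbours $x,y$: for any partner $B$, domination of $u$ forces $\{u,x,y\}\cap B\ne\emptyset$, while if $u\in B$ then $u$ is isolated in $G[A\cup B]$ unless $x\in B$ or $y\in B$; either way $B\cap\{x,y\}\ne\emptyset$. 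If instead $A$ dominates but $G[A]$ has no perfect matching, then $A$ has a maximal arc $R$ of odd length; letting $x,y$ be the two vertices flanking $R$, any partner $B$ with $B\cap\{x,y\}=\emptyset$ leaves $R$ as an odd component of $G[A\cup B]$, so $A\cup B$ cannot have a perfect matching, and again $B\cap\{x,y\}\ne\emptyset$. Since the members of $\pi$ are pairwise disjoint, at most two can meet the two-element set $\{x,y\}$, which is the bound.

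Granting $\Delta(PCG(C_n,\pi))\le 2$, the coalition graph is a disjoint union of paths and cycles in which every vertex has degree at least one. If its order $k$ is at least $4$, such a graph necessarily contains two independent edges (the only connected graphs with maximum degree at most $2$, minimum degree at least $1$, and no two independent edges are $K_2$, $P_3$, $C_3$, all of order at most $3$), i.e. two disjoint partner pairs, yielding two disjoint paired dominating sets $D_1,D_2$. Then $n\ge |D_1|+|D_2|\ge 2\gamma_{pr}(C_n)\ge n$, forcing equality throughout: $\gamma_{pr}(C_n)=n/2$, so $4\mid n$, and $D_1\cup D_2=V$, whence $\pi$ consists of exactly these four sets and $k=4$. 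Thus $PC(C_n)\le 4$ always and $PC(C_n)\le 3$ whenever $4\nmid n$, and combining with the constructions gives the theorem. I expect the main obstacle to be the ``at most two partners'' step, specifically handling the non-dominating case cleanly, so that a single forced $2$-set $\{x,y\}$ simultaneously captures both the domination requirement and the perfect-matching requirement.
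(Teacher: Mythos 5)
Your proof is correct, and while it rests on the same two pillars as the paper's --- explicit constructions for the lower bounds and the key lemma that every set in a $pc$-partition of $C_n$ has at most two $pc$-partners --- the execution differs in ways worth noting. For the degree bound you give a uniform ``blocking pair'' argument (a two-element set $\{x,y\}$ that every partner must meet, extracted either from an undominated vertex or from an odd arc of $G[A]$), whereas the paper splits on the parity of $|A|$ and on whether $G[A]$ has an odd component; the two are essentially equivalent, but yours handles both obstructions at once. Your finish is genuinely cleaner: from $\Delta(PCG)\le 2$ and $\delta(PCG)\ge 1$ you extract two independent edges whenever the order is at least $4$, hence two \emph{disjoint} paired dominating sets, and the chain $n\ge |D_1|+|D_2|\ge 2\gamma_{pr}(C_n)\ge n$ simultaneously yields $PC(C_n)\le 4$, forces $4\mid n$ in the extremal case, and pins the order at exactly $4$; the paper instead argues by cases on whether $|A|+|B|>n/2$ and on which partner-pairs must overlap. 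Finally, your lower-bound constructions diverge where it matters: the paper's order-$3$ partition $\{\{v_1\},\{v_2\},\{v_3,\dots,v_n\}\}$ fails for $n\equiv 2\pmod 4$ (the union $V\setminus\{v_2\}$ induces a path on $n-1$ vertices, which is odd, so it has no perfect matching), while your $(AABB)^{(n-2)/4}CC$ labelling is a valid $pc$-partition there; so your version actually repairs a gap in the published argument. The only point to make fully explicit in a write-up is the degenerate case $A=V\setminus\{x\}$ in the odd-arc argument, where the two flanking vertices coincide --- the conclusion ``at most two partners'' still holds trivially.
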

\begin{proof} 
First we note that indices here are taken modula $n$. Let $G=C_n$, where $V(G)=\lbrace v_1,v_2,\dots ,v_n\rbrace$ and $E(G)=\lbrace v_i v_{i+1} : 1\leq i\leq n\rbrace$, and let $\pi$ be a $PC(G)$-partition. Note that for any pair $(X,Y)$ of $pc$-partners in $G$,  $\lvert X\rvert +\lvert Y\rvert \geq \frac{n}{2}$. Now we consider two cases.

\textbf{Case 1.} $n \not\equiv 0 \pmod{4}$. If $n$ is odd, then the vertex partition $\lbrace \lbrace v_1\rbrace ,\lbrace v_2 \rbrace , \lbrace v_3,v_4, \dots ,v_n \rbrace \rbrace$, is a $pc$-partition of $G$. Otherwise, the vertex partition $\lbrace \lbrace v_1,v_2\rbrace ,\lbrace v_3,v_4 \rbrace , \lbrace v_5,v_6, \dots ,v_n \rbrace \rbrace$, is a $pc$-partition of $G$, so  $\lvert \pi \rvert \geq 3$. Now we show that $\lvert \pi \rvert \leq 4$. Let the sets $A$ and $B$ be $pc$-partners in $\pi$. Since $\lvert A \cup B \rvert \geq \frac{n+1}{2}$, it follows that for any  pair $(X,Y)$ of $pc$-partners,  $A \in \lbrace X,Y\rbrace$ or $B \in \lbrace X,Y\rbrace$. Let $\pi ^\prime =\lbrace C_1,C_2,\dots ,C_k \rbrace$  be the partition of $G[A]$  into its connected components. We  consider two subcases.

\textbf{Subcase 1.1.} $\lvert A\rvert  \equiv 1 \pmod{2}$. It follows that $\pi ^\prime$  contains at least one component with odd order. Let $C_i$  be such a component, with $V(C_i)=\lbrace v_s , v_{s+1}, \dots , v_t \rbrace$. Further, let $V_{s-1}$ and $V_{t+1}$ be the members of $\pi$ containing $v_{s-1}$ and $v_{t+1}$, respectively. Note that $A$ cannot be a $pc$-partner of any set in $\pi \setminus \lbrace V_{s-1} ,V_{t+1}\rbrace$. Hence, $A$ admits at most two $pc$-partners.

\textbf{Subcase 1.2.} $\lvert A\rvert  \equiv 0 \pmod{2}$. If $\pi^\prime$  has a component with odd order, then as discussed in the previous subcase, $A$ admits at most two $pc$-partner. Otherwise, $G[A]$ has a perfect matching, and so $A$ is not a dominating set in $G$. It follows that there exists an index $i$ such that $\lbrace v_{i-1} ,v_i , v_{i+1}\rbrace \cap A =\emptyset$. Let the sets $V_{i-1}$, $V_i$ and $V_{i+1}$ be the members of $\pi$ containing $v_{i-1}$, $v_i$ and $v_{i+1}$, respectively. Note that $A$ has no $pc$-partner in $\pi \setminus \lbrace V_{i-1},V_i,V_{i+1}\rbrace$. Now if  $V_i \notin \lbrace V_{i-1} ,V_{i+1}\rbrace$, then $A$ cannot form a paired coalition with $V_i$, implying that $A$ admits at most two $pc$-partners. Otherwise, it follows again that $A$ admits at most two $pc$-partners.

By symmetry, $B$ admits at most two $pc$-partners as well. Therefore,  $\lvert \pi \rvert \leq 4$. It remains to  show that $\lvert \pi \rvert \neq 4$. Suppose, to the contrary, that $\lvert \pi \rvert =4$. Let $C$ and $D$ be the remaining members of $\pi$ such that $C$ is a $pc$-partner of $A$, and $D$ is a $pc$-partner of $B$. It follows that $\lvert A\rvert +\lvert C\rvert =\lvert B\rvert +\lvert D\rvert =\frac{n}{2}$, implying that $n \equiv 0 \pmod{4}$, a contradiction.

\textbf{Case 2.} $n \equiv 0 \pmod{4}$. Consider the vertex partition $\pi =\lbrace A,B,C,D\rbrace$ of $G$ such that
$A = \bigcup_{i=0} ^{\frac{n}{4} -1} \lbrace v_{4i}\rbrace$, $B = \bigcup_{i=0} ^{\frac{n}{4} -1} \lbrace v_{4i+1}$$\rbrace$, $C = \bigcup_{i=0} ^{\frac{n}{4} -1} \lbrace v_{4i+2}\rbrace$ and $D = \bigcup_{i=0} ^{\frac{n}{4} -1} \lbrace v_{4i+3}\rbrace$. One can observe that $\pi$ is a $pc$-partition of $G$, where $A$ forms a paired coalition with $B$, and $C$ forms a paired coalition with $D$. Thus,  $\lvert \pi \rvert \geq 4$. Now we show that $\lvert \pi \rvert \leq 4$. Let the sets $A$ and $B$ be $pc$-partners in $\pi$. We  consider two subcases.

\textbf{Subcase 2.1.} $\lvert A\rvert +\lvert B\rvert > \frac{n}{2}$. It follows that for any  pair $(X,Y)$ of $pc$-partners, $A \in \lbrace X,Y\rbrace$ or $B \in \lbrace X,Y\rbrace$. Now similar to the previous case, we can show that $\lvert \pi \rvert \leq 4$, and so $\lvert \pi \rvert =4$.

\textbf{Subcase 2.2.} $\lvert A\rvert +\lvert B\rvert = \frac{n}{2}$. Suppose, to the contrary, that $\lvert \pi \rvert \geq 5$. Observe that for any  pair $(X,Y)$ of $pc$-partners, $A \in \lbrace X,Y\rbrace$ or $B \in \lbrace X,Y\rbrace$. On the other hand, similar to the previous case, we can show that each set in $\lbrace A,B\rbrace$ admits at most two $pc$-partners, a contradiction. Hence, $\lvert \pi \rvert \leq 4$, and so $\lvert \pi \rvert =4$.
\end{proof}
\begin{corollary} \label{delta-c} 
	If $C$ is a cycle, and $\pi$ is a $pc$-partition of $C$, then $\Delta (PCG(C,\pi))\leq 2$.
\end{corollary}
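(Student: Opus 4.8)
The plan is to read the statement combinatorially: by definition of $PCG(C,\pi)$, the degree of a vertex $v_i$ equals the number of sets in $\pi$ that are $pc$-partners of $V_i$. Hence it suffices to show that every member of $\pi$ admits at most two $pc$-partners. This is exactly the per-set bound that is established inside the proof of Theorem \ref{pc-cycle}, so the corollary is essentially a byproduct of that argument, and I would make the dependence explicit. Since every member of a $pc$-partition has at least one $pc$-partner by definition, each vertex of $PCG(C,\pi)$ would then have degree between $1$ and $2$, yielding $\Delta(PCG(C,\pi))\leq 2$.

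To argue the bound directly, I would first recall the size inequality $\lvert A\rvert+\lvert B\rvert\geq \frac{n}{2}$ valid for every pair $(A,B)$ of $pc$-partners, which follows from the fact that a paired dominating set of $C_n$ has at least $n/2$ vertices (each component of its induced subgraph is an even path and each gap between components has length at most $2$). Fixing a set $A$ that is a $pc$-partner of some set, I would then exploit the way in which $A$ itself fails to be a paired dominating set: either $G[A]$ has a connected component of odd order, or $A$ is not dominating and therefore leaves a run of three consecutive vertices outside $A$.

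In the first situation, if $C_i=\lbrace v_s,\dots,v_t\rbrace$ is an odd component of $G[A]$, then for $A\cup B$ to admit a perfect matching the set $B$ must meet $\lbrace v_{s-1},v_{t+1}\rbrace$; thus every $pc$-partner of $A$ contains $v_{s-1}$ or $v_{t+1}$, and since these two vertices lie in at most two members of $\pi$, the set $A$ has at most two partners. In the second situation, if $\lbrace v_{i-1},v_i,v_{i+1}\rbrace\cap A=\emptyset$, then to dominate $v_i$ any partner $B$ must meet $\lbrace v_{i-1},v_i,v_{i+1}\rbrace$; letting $C,D,E$ be the members of $\pi$ containing these three vertices, the perfect-matching requirement rules out the middle set $D$ as a partner whenever $D\notin\lbrace C,E\rbrace$, again leaving at most two candidates.

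The main obstacle is precisely this second situation, where a naive count allows three candidate partners and one must invoke the perfect-matching condition to eliminate the middle one; this is the delicate step already carried out in Subcase~1.2 of the proof of Theorem \ref{pc-cycle}. With that step in hand, both situations give at most two $pc$-partners for $A$, and combining with the lower bound $1$ on the number of partners completes the argument that $\Delta(PCG(C,\pi))\leq 2$.
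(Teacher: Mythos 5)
Your proposal is correct and follows essentially the same route as the paper: the corollary is obtained there as an immediate consequence of the argument inside the proof of Theorem \ref{pc-cycle}, which shows (via exactly your two cases, an odd component of $G[A]$ forcing partners to contain $v_{s-1}$ or $v_{t+1}$, or a gap of three consecutive vertices with the middle set eliminated by the perfect-matching condition) that every member of $\pi$ has at most two $pc$-partners. Your translation of this into the degree bound on $PCG(C,\pi)$ is exactly the intended reading.
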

\begin{theorem}
	A graph $G$ is a $PCC$-graph if and only if $G \in \lbrace P_3,K_3,K_2 \cup K_2, P_4,C_4\rbrace$.
\end{theorem}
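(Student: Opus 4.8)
The plan is to prove the two implications separately. For the forward direction, I would first pin down the shape of any $PCC$-graph $G=PCG(C_n,\pi)$ using the machinery already available. Corollary~\ref{delta-c} gives $\Delta(G)\le 2$; since every member of a $pc$-partition must have a $pc$-partner, $G$ has no isolated vertex, so $\delta(G)\ge 1$; and since $|\pi|$ cannot exceed $PC(C_n)$, Theorem~\ref{pc-cycle} forces $|V(G)|=|\pi|\le 4$, while $|\pi|\ge 2$ because a coalition involves two distinct sets. A graph on at most four vertices with $1\le\delta(G)\le\Delta(G)\le 2$ is a disjoint union of paths $P_m$ ($m\ge 2$) and cycles $C_m$ ($m\ge 3$), so the complete list of candidates is $K_2$ on two vertices; $P_3$ and $K_3$ on three vertices; and $P_4$, $C_4$ and $K_2\cup K_2$ on four vertices. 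The forward direction then reduces to deciding which of these six graphs actually occur.

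For the reverse direction I would realize each listed graph by an explicit cycle and $pc$-partition, checking in each case that no part is paired dominating and computing exactly which unions are paired dominating. Convenient witnesses are: $K_3$ from $C_3$ with all singletons; $P_3$ from $C_5$ with $\{\,\{v_1\},\{v_2\},\{v_3,v_4,v_5\}\,\}$; $C_4$ from $C_4$ with all singletons; $K_2\cup K_2$ from $C_8$ with the four blocks $\{v_1,v_2\},\{v_3,v_4\},\{v_5,v_6\},\{v_7,v_8\}$, where each block forms a coalition only with its antipodal block; and, least obviously, $P_4$ from $C_8$ with $\{\,\{v_1\},\{v_2,v_5,v_6\},\{v_3,v_4,v_7\},\{v_8\}\,\}$, for which one checks that the coalition pairs are precisely $\{v_1\}$ with $\{v_2,v_5,v_6\}$, then $\{v_2,v_5,v_6\}$ with $\{v_3,v_4,v_7\}$, and then $\{v_3,v_4,v_7\}$ with $\{v_8\}$. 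These verifications are routine and I would not expect difficulty here.

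The main obstacle is the single remaining elimination in the forward direction, namely excluding $K_2$, and here I expect genuine trouble. An order-two $pc$-partition $\{A,B\}$ produces $PCG\simeq K_2$ exactly when $A\cup B=V(C_n)$ is paired dominating, so $n$ is even, while neither $A$ nor $B$ is paired dominating, and such partitions are easy to construct. For example, in $C_4$ the partition $\{\,\{v_1\},\{v_2,v_3,v_4\}\,\}$ has both parts not paired dominating (a single vertex, and an induced $P_3$, which has no perfect matching), yet their union is $V(C_4)$, which is paired dominating; hence this partition realizes $K_2$. Consequently $K_2$ does appear to be a $PCC$-graph, and the characterization as stated seems to omit it. I would therefore either enlarge the list to $\{K_2,P_3,K_3,K_2\cup K_2,P_4,C_4\}$, or, if the intended convention is to range only over $PC(C_n)$-partitions (which always have order at least $3$ by Theorem~\ref{pc-cycle}), note explicitly that order-two partitions are excluded so that $K_2$ cannot arise. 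Apart from this point, all other candidate graphs are already on the list, so no further elimination is needed.
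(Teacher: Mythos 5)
Your plan is essentially the paper's own proof: the paper likewise combines Theorem~\ref{pc-cycle}, Corollary~\ref{delta-c} and the absence of isolated vertices to cut the candidates down to unions of paths and cycles on at most four vertices, and then realizes the listed graphs by explicit partitions of $C_3$, $C_4$, $C_5$ and $C_8$. Your witnesses for $K_2\cup K_2$ and $P_4$ differ from the paper's partitions $D$ and $E$ of $C_8$, but they check out, so that part is only a cosmetic difference.

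The one substantive divergence is the $K_2$ issue, and you are right to press on it. The paper's converse opens with ``By Theorem~\ref{pc-cycle}, we have $3\le n\le 4$,'' but Theorem~\ref{pc-cycle} only bounds the \emph{maximum} order of a $pc$-partition of a cycle; it gives the upper bound $n\le 4$ and says nothing that excludes $pc$-partitions of order $2$. Your partition $\lbrace \lbrace v_1\rbrace ,\lbrace v_2,v_3,v_4\rbrace \rbrace$ of $C_4$ is a genuine $pc$-partition of order $2$ (neither part is paired dominating, their union is), and more generally $\lbrace \lbrace v_1\rbrace , V(C_n)\setminus \lbrace v_1\rbrace \rbrace$ works for every even $n$, so under the paper's stated definition of a $PCC$-graph --- which quantifies over all $pc$-partitions, not only $PC(C_n)$-partitions --- $K_2$ is realizable and the elimination the paper needs is impossible. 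In other words, the missing step you identify is a gap in the paper's proof rather than in yours: either $K_2$ must be added to the list, or the definition of a $PCC$-graph must be restricted to $PC(C)$-partitions (which would also be consistent with how $P_2$ is realized in the $PCP$-graph theorem, since $PC(P_4)=2$). Apart from flagging this explicitly, your argument proves exactly what the paper's does.
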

\begin{proof}
Consider the cycles $C_3 =(v_1,v_2,v_3)$, $C_4=(v_1,v_2,v_3,v_4)$, $C_5=(v_1, \dots ,v_5)$ and $C_8 =(v_1, \dots ,v_8)$, and  the vertex partitions $A=\lbrace \lbrace v_1\rbrace ,\lbrace v_2\rbrace ,\lbrace v_3\rbrace \rbrace$, $B=\lbrace \lbrace v_1\rbrace ,\lbrace v_2\rbrace ,\lbrace v_3\rbrace ,\lbrace v_4\rbrace \rbrace$, $C=\lbrace \lbrace v_1\rbrace ,\lbrace v_2\rbrace ,\lbrace v_3,v_4,v_5\rbrace \rbrace$, $D=\lbrace \lbrace v_1,v_2,v_5\rbrace ,\lbrace v_3,v_7,v_8\rbrace ,\lbrace v_6\rbrace ,\lbrace v_4\rbrace \rbrace$, and \\ $E=\lbrace \lbrace v_1,v_2,v_6\rbrace ,\lbrace v_3,v_7,v_8\rbrace ,\lbrace v_5\rbrace ,\lbrace v_4\rbrace \rbrace$.

 One can observe that $PCG(C_3, A) \simeq K_3$, $PCG(C_5, C) \simeq P_3$, $PCG(C_4,B) \simeq C_4$, $PCG(C_8, D) \simeq K_2 \cup K_2$, and $PCG(C_8, E) \simeq P_4$. Conversely, let $G$ be a $PCC$-graph of order $n$. By Theorem \ref{pc-cycle}, we have $3\leq n\leq 4$, and by Corollary \ref{delta-c}, $\Delta (G) \leq 2$. Further, note that $G$ has no isolated vertex. Hence, $G \in \lbrace P_3,K_3,K_2 \cup K_2, P_4,C_4 \rbrace$, which completes the proof.
\end{proof}
Note that  not all graphs have a $pc$-partition. In the next section, we will construct a family of trees for which no $pc$-partition exists.
	\section{Trees}
The following two corollaries are immediate results of Lemma \ref{lem.sup2} and Corollary \ref{coro-all}, respectively.
\begin{corollary} \label{coro-sup}
	Let $T$ be  tree with $PC(T)\geq 3$, and let $\pi$ be  a $PC(T)$-partition. Then $\pi$ has a member containing all support vertices of $T$.
\end{corollary}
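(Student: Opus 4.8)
The plan is to observe that Corollary~\ref{coro-sup} is essentially an immediate specialization of Lemma~\ref{lem.sup2} to the class of trees, so the proof should be short: I need only verify that the hypotheses of Lemma~\ref{lem.sup2} are met when $G=T$ is a tree with $PC(T)\geq 3$. First I would note that any tree of order $n\geq 2$ has at least one leaf, so $\delta(T)=1$, which is precisely the degree hypothesis required by the lemma. (If $T=K_1$ then $PC(T)=0<3$, so this degenerate case does not arise under our assumption.)

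Given $\delta(T)=1$ and $PC(T)\geq 3$, together with the assumption that $\pi$ is a $PC(T)$-partition, the three hypotheses of Lemma~\ref{lem.sup2} are satisfied verbatim. I would then simply invoke Lemma~\ref{lem.sup2} to conclude that there exists a set in $\pi$ containing all support vertices of $T$, which is exactly the assertion of the corollary.

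There is essentially no obstacle here, since the statement is a direct corollary: the only thing to check is the translation of the tree hypothesis into the minimum-degree hypothesis of the lemma, and this is immediate because every tree on at least two vertices is connected and has a pendant vertex. Consequently the proof reduces to the single sentence: \emph{Since every tree of order at least two has a leaf, we have $\delta(T)=1$, and the claim follows at once from Lemma~\ref{lem.sup2}.} I would keep the write-up to this one line, matching the paper's own remark that the corollary is an ``immediate result'' of the lemma.
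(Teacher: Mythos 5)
Your proposal is correct and matches the paper exactly: the paper presents this corollary as an immediate consequence of Lemma \ref{lem.sup2}, and your verification that a tree with $PC(T)\geq 3$ has order at least $2$ and hence $\delta(T)=1$ is precisely the (trivial) translation needed to invoke that lemma.
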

\begin{corollary} \label{coro-t}
	Let $T$ be a tree with $PC(T)\geq 3$, and let $\pi$ be a $PC(T)$-partition. Further, let $V_s$ be the member of $\pi$ containing support vertices of $T$. Then for any pair $(A,B)$ of $pc$-partners in $T$, $V_s \in \lbrace A,B\rbrace$.
\end{corollary}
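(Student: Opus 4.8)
The plan is to recognize that Corollary~\ref{coro-t} is simply the restriction of Corollary~\ref{coro-all} to the class of trees, so the whole argument reduces to checking that every hypothesis of Corollary~\ref{coro-all} is satisfied in the tree setting. Corollary~\ref{coro-all} assumes $\delta(G)=1$, $PC(G)\geq 3$, and that $S$ is the member of $\pi$ containing the support vertices; of these, the only condition not literally present in the statement of Corollary~\ref{coro-t} is $\delta(T)=1$, so that is the one point I would need to establish.

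First I would note that $PC(T)\geq 3$ forces $T$ to have order $n\geq 3$: by Definition~\ref{pcp-def}, $PC(T)$ is the maximum number of parts in a $pc$-partition, so a value of at least $3$ means $\pi$ has at least three nonempty parts and hence $T$ has at least three vertices. Since every tree on at least two vertices has a leaf, this immediately gives $\delta(T)=1$. I would also invoke Corollary~\ref{coro-sup} (which applies precisely because $PC(T)\geq 3$) to confirm that the set $S$ named in the statement is well defined, that is, that $\pi$ genuinely has a member containing all support vertices of $T$.

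With $\delta(T)=1$, $PC(T)\geq 3$, and $S$ the member of $\pi$ containing the support vertices, all hypotheses of Corollary~\ref{coro-all} hold for $G=T$, and I would apply it verbatim to conclude that $S\in\{A,B\}$ for every pair $(A,B)$ of $pc$-partners. There is essentially no obstacle here, since the substantive content was already proved for arbitrary graphs having a degree-one vertex; the only step demanding even a moment's care is verifying $\delta(T)=1$, which is immediate from the existence of leaves in any nontrivial tree.
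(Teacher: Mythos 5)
Your proposal is correct and matches the paper exactly: the paper states Corollary~\ref{coro-t} as an immediate consequence of Corollary~\ref{coro-all}, and your only added content — verifying $\delta(T)=1$ because $PC(T)\geq 3$ forces $n\geq 3$ and every nontrivial tree has a leaf — is precisely the routine check the paper leaves implicit.
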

	Note that a tree with paired coalition number two  has a perfect matching and a tree containing at least one strong support vertex has no perfect matching. Therefore, we have the following result.
	\begin{observation} \label{obs-two} 
	Let $T$ be a tree with at least one strong support vertex. If $T$ admits a $pc$-partition, then $PC(T) \geq 3$. 
\end{observation}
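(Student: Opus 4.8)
The statement to prove is Observation~\ref{obs-two}: a tree $T$ containing at least one strong support vertex satisfies $PC(T)\geq 3$.

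\bigskip

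The plan is to exhibit an explicit $pc$-partition of $T$ with exactly three parts, which immediately gives $PC(T)\geq 3$ (and in particular shows $PC(T)\neq 0$, so a $pc$-partition does exist). The guiding idea, already hinted at in the sentence preceding the observation, is that a strong support vertex prevents $T$ from having a perfect matching, so any paired dominating set must be a proper subset of $V(T)$; this gives us room to split $V(T)$ into pieces none of which individually paired-dominates.

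\bigskip

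First I would fix a strong support vertex $w$ with at least two leaf neighbors, and let $x,y$ be two of its leaves. The natural candidate partition is $\pi=\{\,\{x\},\{y\},V(T)\setminus\{x,y\}\,\}$. I would then verify the three requirements of a $pc$-partition for this $\pi$. The key observation is that $V(T)\setminus\{x,y\}$ is itself a paired dominating set of $T$: it dominates every vertex (the only removed vertices are the leaves $x,y$, both still dominated by $w$, which remains in the set), and the induced subgraph has a perfect matching provided $T$ is large enough, since $w$ can be matched outside and the rest matched accordingly. However, a paired dominating \emph{set} used as one block of the partition is fine \emph{only} if that block is not itself claimed to need a partner; but the definition forbids any block from being a paired dominating set on its own. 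So I must instead choose a partition where the large block is \emph{not} paired dominating. The cleaner choice is to keep \emph{one} leaf, say $x$, inside the large block: set $\pi=\{\,\{y\},\{w\},\,V(T)\setminus\{w,y\}\,\}$, or more robustly pick the three blocks so that each fails to paired-dominate but pairs up to recover domination and a perfect matching.

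\bigskip

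The main obstacle, and the step I would spend the most care on, is producing a single partition that works uniformly for \emph{every} tree with a strong support vertex, including small or degenerate cases (e.g.\ the star $K_{1,n-1}$, or a double star). Concretely I would take $A=\{x\}$, $B=\{y\}$, and $C=V(T)\setminus\{x,y\}$, and argue: (i) $A$ and $B$ are singletons containing leaves, hence neither dominates $T$ nor (trivially) admits a perfect matching, so neither is a paired dominating set; (ii) $C$ is a dominating set (as above) but its induced subgraph has no perfect matching, because $w\in C$ is adjacent within $C$ only to vertices that can be matched, yet the parity/structure forced by having removed exactly the two leaves $x,y$ leaves $w$ with an unmatched obstruction — here I would instead justify non-paired-domination directly, typically because $C$ fails to dominate some vertex or fails the matching condition; (iii) the crucial coalitions: $A\cup C=V(T)\setminus\{y\}$ and $B\cup C=V(T)\setminus\{x\}$. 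Using the strong support vertex, I would show $V(T)\setminus\{y\}$ is a paired dominating set: it dominates everything ($y$ is dominated by $w$) and admits a perfect matching by matching $w$ with $x$ and extending a perfect matching on the remaining forest, which exists precisely because removing the single leaf $y$ restores even parity at $w$. Thus $A$ partners $C$ and $B$ partners $C$, every block has a partner, and $|\pi|=3$, giving $PC(T)\geq 3$. The delicate point throughout is the perfect-matching bookkeeping at $w$: the two leaves $x,y$ are exactly what lets us toggle the parity, and I would present this matching argument carefully rather than by computation.
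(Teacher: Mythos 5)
Your approach is genuinely different from the paper's, and it has a gap that cannot be repaired. The paper does not construct anything: it simply notes that a $pc$-partition of order $2$ would force $V(T)$ itself to be a paired dominating set, hence $T$ would have a perfect matching, while a strong support vertex (two leaves sharing the same neighbor $w$) makes a perfect matching impossible, since at most one of those leaves can be matched (and it must be matched to $w$). Thus $PC(T)\neq 2$, and since a $pc$-partition of order $1$ is impossible by definition, $PC(T)\geq 3$. That is the entire argument.

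Your explicit partition $A=\{x\}$, $B=\{y\}$, $C=V(T)\setminus\{x,y\}$ fails concretely: $A\cup C=V(T)\setminus\{y\}$ has $n-1$ vertices, so whenever $n$ is even it cannot induce a perfect matching and $A$, $C$ are not $pc$-partners at all. The double star $S_{2,2}$ already kills it. You notice the other problem too --- that $C$ may itself be a paired dominating set --- but neither issue is ever resolved; the proposal ends with ``or more robustly pick the three blocks so that\dots'', which is not a proof. More fundamentally, no uniform explicit construction can work, because the paper itself later exhibits trees with strong support vertices that admit \emph{no} $pc$-partition whatsoever (the perfect binary tree $T(4)$, and the family obtained by attaching at least two leaves to every vertex of a tree with a perfect matching); for such trees there is no $3$-part $pc$-partition to exhibit. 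The observation is really the assertion that $PC(T)=2$ is impossible (so the value is $0$ or at least $3$), and only the paper's non-existence argument, not a construction, can deliver it.
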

\begin{lemma} \label{lem-sup3}
	Let $T$ be a tree containing a strong support vertex $v$, and let $U=\lbrace v_1,v_2, \dots ,v_k\rbrace$ be the set of leaves adjacent to $v$. Further, let $\pi$ be a $PC(T)$-partition, and let $V_s$ be the member of $\pi$ containing support vertices of $T$. Then  $V_s \cap U=\emptyset$.
\end{lemma}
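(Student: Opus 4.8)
The plan is to derive a contradiction from assuming some leaf of $v$ lies in $S$, by exploiting that every perfect matching of a $pc$-coalition union must match each such leaf to its unique neighbor $v$. First I would record the easy structural facts. Since $v$ is a strong support vertex, $k \geq 2$, so $U$ contains at least two leaves, say $v_1$ and $v_2$. By Observation \ref{obs-two} we have $PC(T) \geq 3$, so $|\pi| \geq 3$; and since $v$ is itself a support vertex, Corollary \ref{coro-sup} gives $v \in S$. The crucial leverage comes from Corollary \ref{coro-t}: every pair of $pc$-partners must include $S$. Because $\pi$ is a $pc$-partition, every set $X \in \pi \setminus \{S\}$ has a partner, and that partner must be $S$ itself. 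Hence $S$ forms a paired coalition with \emph{every} other member of $\pi$.

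Next I would set up the contradiction. Suppose $S \cap U \neq \emptyset$, say $v_1 \in S$. The central observation is that $v_1$ is a leaf whose only neighbor is $v$, so in \emph{any} perfect matching of a subgraph $G[S \cup X]$ containing $v_1$, the vertex $v_1$ is forced to be matched to $v$. I would then locate the second leaf $v_2$ and split into two cases according to whether $v_2 \in S$. If $v_2 \in S$, pick any $X \in \pi \setminus \{S\}$ (which exists since $|\pi| \geq 3$); then $S \cup X$ is a paired dominating set, so $G[S \cup X]$ has a perfect matching, and both $v_1$ and $v_2$ must be matched to $v$, which is impossible. If instead $v_2 \notin S$, let $X_2$ be the member of $\pi$ containing $v_2$; since $S$ forms a paired coalition with $X_2$, the subgraph $G[S \cup X_2]$ again has a perfect matching containing both $v_1 \in S$ and $v_2 \in X_2$, forcing both to match $v$, once more impossible.

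In either case a single vertex $v$ would have to be matched to two distinct leaves simultaneously, contradicting the definition of a matching. Therefore no leaf of $U$ can belong to $S$, i.e.\ $S \cap U = \emptyset$.

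I expect the only delicate step to be guaranteeing that both leaves genuinely appear together in the union of a valid coalition pair; this is exactly what the "$S$ partners with every other set" consequence of Corollary \ref{coro-t} secures, since wherever $v_2$ happens to sit, $S$ still forms a paired coalition with that set, placing both $v_1$ and $v_2$ in the same matched union. Once that is in hand, the matching obstruction is immediate.
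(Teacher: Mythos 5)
Your proof is correct and follows essentially the same route as the paper's: both invoke Observation \ref{obs-two} to get $PC(T)\geq 3$, use Corollary \ref{coro-t} to conclude that $S$ must be the $pc$-partner of every other set in $\pi$, and then derive the contradiction that two leaves of $v$ would both have to be matched to $v$ in a perfect matching of $G[S\cup X]$ for a suitable $X$. Your case split on whether the second leaf lies in $S$ or in some other set $X_2$ mirrors the paper's split on whether $\lvert S\cap U\rvert\geq 2$ or $=1$.
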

\begin{proof}
	Suppose, to the contrary, that $V_s \cap U\neq \emptyset$. Let $A=V_s \cap U$. By Observation \ref{obs-two},  $PC(T)\geq 3$, and so by Corollary \ref{coro-t}, $V_s$ forms a paired coalition with all other sets. If $\lvert A\rvert \geq 2$, then for any set $V_i \in \pi \setminus \lbrace V_s\rbrace$, $T[ V_s \cup V_i]$ does not have a perfect matching, and so $V_s$ has no $pc$-partner, a contradiction. Hence, we may assume that $\lvert A\rvert=1$. Consider an arbitrary leaf $v_r \in U \setminus A$, and let $V_r$ be the member of $\pi$ containing $v_r$. Now we observe that $T[ V_s \cup V_r] $ has no perfect matching, implying that the sets $V_r$ and $V_s$ are not $pc$-partners, a contradiction. Hence, $V_s \cap U =\emptyset$.
\end{proof}

The following lower bound can be obtained for trees having a $pc$-partition and at least one strong support vertex.

\begin{theorem} \label{tree-bound}
	Let $T$ be a tree containing at least one strong support vertex, and let $\lbrace v_1,v_2,\dots ,v_t\rbrace$ be the set of  strong support vertices of $T$. For each $1\le i\leq t$, let $l_i$ be the number of leaves adjacent to $v_i$. If there exists a $pc$-partition for $T$,  then 
 $$ PC(T) \geq \max \{l_i:i=1,\dots ,t\}+1. $$	
\end{theorem}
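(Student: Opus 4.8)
The plan is to show that in \emph{any} $PC(T)$-partition the leaves of a maximum strong support vertex are forced to lie in pairwise distinct parts, none of which is the part carrying the support vertices; this immediately produces $m+1$ parts and hence the bound.

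First I would fix a strong support vertex $v$ with $l_v=\max\{l_i:1\le i\le t\}=:m$ (so $m\ge 2$), and let $U=\{u_1,\dots,u_m\}$ be its leaf neighbours. Since $T$ contains a strong support vertex and, by hypothesis, admits a $pc$-partition, Observation \ref{obs-two} gives $PC(T)\ge 3$, so a $PC(T)$-partition $\pi$ exists and the structural results proved for such partitions apply. By Corollary \ref{coro-sup}, $\pi$ has a member $S$ containing all support vertices; in particular $v\in S$. By Corollary \ref{coro-t}, $S$ belongs to every $pc$-partner pair, so each set of $\pi$ other than $S$ has $S$ as its \emph{only} possible $pc$-partner. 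Finally, Lemma \ref{lem-sup3} gives $S\cap U=\emptyset$, so every $u_i$ lies in some member of $\pi$ different from $S$.

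The key step is to prove that no single member of $\pi$ contains two of the leaves in $U$. Suppose to the contrary that some part $W\neq S$ contained two leaves $u_i,u_j$. As observed above, $W$ can form a paired coalition only with $S$, so $S\cup W$ must be a paired dominating set, and hence $G[S\cup W]$ admits a perfect matching. But the only neighbour of each $u_i$ in $T$ is $v$, so in such a matching both $u_i$ and $u_j$ would have to be matched to $v$, which is impossible. Thus each part of $\pi$ contains at most one leaf of $U$.

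Consequently the $m$ leaves $u_1,\dots,u_m$ occupy $m$ pairwise distinct members of $\pi$, each different from $S$ by Lemma \ref{lem-sup3}. Together with $S$ itself this exhibits at least $m+1$ distinct parts, whence $PC(T)=|\pi|\ge m+1$. I expect the only delicate point to be the matching obstruction in the key step (two leaves of $v$ cannot simultaneously be matched to $v$); everything else is bookkeeping with the already-established corollaries, which is why I would set those up first and reserve the main effort for that one paragraph.
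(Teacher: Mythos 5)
Your proposal is correct and follows essentially the same route as the paper's proof: each member of $\pi$ can contain at most one leaf of $U$ (since two leaves of $v$ cannot both be matched to $v$ in any perfect matching), and by Lemma \ref{lem-sup3} the part $S$ containing the support vertices is disjoint from $U$, yielding $\max\{l_i\}+1$ distinct parts. Your write-up is in fact slightly more explicit than the paper's, which asserts the ``at most one leaf per part'' step without spelling out the matching obstruction.
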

\begin{proof}
	Let $l =\max \{l_i : i=1, \dots ,t\}$, and let $v$ be a strong support vertex with $l$ leaves adjacent to it, where $U=\lbrace u_1,u_2,\dots ,u_{l} \rbrace$ is the set of leaves adjacent to $v$. Further, Let $\pi$ be a $PC(T)$-partition. If there exists a set $V_i$ in $\pi$ containing at least two vertices of $U$, then $V_i$ would not form a paired coalition with any other sets. Hence, each set in $\pi$  contains at most one vertex from $U$, implying that $\lvert \pi \rvert \geq l$. Furthermore, Observation \ref{obs-two} implies that $PC(T) \geq 3$. Thus, by Corollary \ref{coro-sup}, there exists a set $V_s \in \pi$  that  contains all support vertices of $T$. Now Lemma \ref{lem-sup3} implies that  $V_s \cap U =\emptyset$, and so $\lvert \pi \rvert \geq l+1$.  This completes the proof.
\end{proof}
Next we present a family of trees for which no $pc$-partition exists.
\begin{proposition} 
	Let $T$ be a tree containing a perfect matching, and let $T^\prime$ be the tree obtained from $T$ by adding at least two  leaf neighbors to each vertex of $T$. Then $T'$ has no $pc$-partition. 
\end{proposition}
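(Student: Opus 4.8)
The plan is to argue by contradiction: assume that $T'$ admits a $pc$-partition $\pi$, and then derive that one of its members is itself a paired dominating set, which is forbidden by Definition \ref{pcp-def}. The first step is to identify the support vertices and strong support vertices of $T'$. Since $T'$ is obtained from $T$ by attaching at least two leaves to every vertex of $T$, each vertex of $T$ becomes a strong support vertex of $T'$, the only leaves of $T'$ are the newly attached ones, and hence the set of support vertices of $T'$ is exactly $V(T)$.

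Because $T'$ has a strong support vertex and we are assuming a $pc$-partition exists, Observation \ref{obs-two} gives $PC(T') \geq 3$, so Corollary \ref{coro-sup} applies: there is a member $S \in \pi$ containing all support vertices of $T'$, that is, $V(T) \subseteq S$. Next I would invoke Lemma \ref{lem-sup3} at each strong support vertex $v$ of $T'$; it forces $S$ to be disjoint from the set of leaves attached to $v$. Ranging over all vertices of $T$, this shows that $S$ contains none of the leaves of $T'$. Combined with $V(T) \subseteq S$, we conclude $S = V(T)$.

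It then remains to check that $S = V(T)$ is a paired dominating set of $T'$. Since $S$ contains every vertex of $T$ and every leaf of $T'$ is adjacent to its original support vertex in $S$, the set $S$ dominates $T'$. Moreover, attaching leaves creates no new edges among the vertices of $T$, so the induced subgraph $T'[S]$ equals $T$, which by hypothesis has a perfect matching. Thus $S$ is a dominating set whose induced subgraph admits a perfect matching, i.e. a paired dominating set. This contradicts the requirement in Definition \ref{pcp-def} that no member of a $pc$-partition be a paired dominating set, completing the argument.

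The main obstacle is the bookkeeping that pins down $S$ exactly: one must verify that the support vertices of $T'$ coincide with $V(T)$, and that Lemma \ref{lem-sup3} may be applied simultaneously at every strong support vertex to exclude all leaves from $S$. Once $S = V(T)$ is established, the contradiction is immediate from the perfect matching of $T$.
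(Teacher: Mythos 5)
Your proof is correct and follows essentially the same route as the paper: invoke Corollary \ref{coro-sup} to find the member $S$ containing all support vertices of $T'$ (which are exactly $V(T)$), use Lemma \ref{lem-sup3} to exclude all attached leaves so that $S=V(T)$, and conclude that $S$ is a paired dominating set since $T'[S]\simeq T$ has a perfect matching, a contradiction. The only cosmetic difference is that you obtain $PC(T')\geq 3$ directly from Observation \ref{obs-two} rather than via Theorem \ref{tree-bound}, and you should take $\pi$ to be a $PC(T')$-partition so that those lemmas apply.
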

\begin{proof}
		Suppose, to the contrary, that $T^\prime$ has a $pc$-partition. Let $\pi$ be a $PC(T^\prime)$-partition. It follows from Theorem \ref{tree-bound} that  $PC(T^\prime) \geq  3$. Let $S$ be the set of support vertices of $T^\prime$, and let $V_s$ be the member of $\pi$ containing  $S$. By the construction of $T^\prime$, (Figure \ref{pic1}) we have $S=V(T)$. Furthermore, each vertex in $S$ is a strong support vertex and each vertex in $V(T^\prime) \setminus S$ is a leaf. Now Lemma \ref{lem-sup3} implies that $V_s =S$, and so $V_s =V(T)$. Hence, $T[V_s]$ has a perfect matching, and so  $V_s$ is a paired dominating set in $T^\prime$, a contradiction. Therefore, the result follows.  	
\end{proof}
Figure \ref{pic1} illustrates two examples of such a construction, where the original tree $T$ is colored blue. 
		\begin{figure}[!htbp]
		\centering
		\begin{subfigure}{0.4\textwidth}
		\begin{tikzpicture}[scale=.3, transform shape]
			\node [draw, shape=circle,fill=black,color=blue] (v1) at  (0,0) {};
			\node [draw, shape=circle,fill=black,color=blue] (v2) at  (4,0) {};
			\node [draw, shape=circle,fill=black,color=blue] (v3) at  (8,0) {};
			\node [draw, shape=circle,fill=black,color=blue] (v4) at  (12,0) {};
			\node [draw, shape=circle,fill=black,color=red] (v5) at  (-1,4) {};
			\node [draw, shape=circle,fill=black,color=red] (v0) at  (0,4) {};
			\node [draw, shape=circle,fill=black,color=red] (v6) at  (1,4) {};
			\node [draw, shape=circle,fill=black,color=red] (v7) at  (3,4) {};
			\node [draw, shape=circle,fill=black,color=red] (v8) at  (5,4) {};
			\node [draw, shape=circle,fill=black,color=red] (v9) at  (7,4) {};
			\node [draw, shape=circle,fill=black,color=red] (v10) at  (9,4) {};
			\node [draw, shape=circle,fill=black,color=red] (v11) at  (11,4) {};
			\node [draw, shape=circle,fill=black,color=red] (v13) at  (12,4) {};
			\node [draw, shape=circle,fill=black,color=red] (v12) at  (13,4) {};
			\node [draw, shape=circle,fill=black,color=red] (v14) at  (14,4) {};
			
			\draw[color=red] (v1)--(v5);
		
			\draw[color=red](v1)--(v0);
			\draw[color=red](v1)--(v6);
			\draw[color=red](v2)--(v7);
			\draw[color=red](v2)--(v8);
			\draw[color=red](v3)--(v9);
			\draw[color=red](v3)--(v10);
			\draw[color=red](v4)--(v11);
			\draw[color=red](v4)--(v12);
			\draw[color=red](v4)--(v13);
			\draw[color=red](v4)--(v14);
			\draw[color=blue](v1)--(v2);
			\draw[color=blue](v2)--(v3);
			\draw[color=blue](v3)--(v4);			
			
		\end{tikzpicture}
	
	\end{subfigure}
\begin{subfigure}{0.4\textwidth}
	\begin{tikzpicture}[scale=.3, transform shape]
		\node [draw, shape=circle,fill=black,color=blue] (v1) at  (0,0) {};
		\node [draw, shape=circle,fill=black,color=blue] (v2) at  (4,0) {};
	\node [draw, shape=circle,fill=black,color=blue] (v3) at  (8,0) {};
	\node [draw, shape=circle,fill=black,color=blue] (v4) at  (12,0) {};
	\node [draw, shape=circle,fill=black,color=blue] (v5) at  (16,0) {};

	\node [draw, shape=circle,fill=black,color=blue] (v6) at  (0,4) {};
	\node [draw, shape=circle,fill=black,color=blue] (v7) at  (4,4) {};
	\node [draw, shape=circle,fill=black,color=blue] (v8) at  (8,4) {};
	\node [draw, shape=circle,fill=black,color=blue] (v9) at  (12,4) {};
	\node [draw, shape=circle,fill=black,color=blue] (v10) at  (16,4) {};
		
	\node [draw, shape=circle,fill=black,color=red] (v11) at  (-1,-4) {};
	\node [draw, shape=circle,fill=black,color=red] (v12) at  (1,-4) {};
	\node [draw, shape=circle,fill=black,color=red] (v13) at  (3,-4) {};
	\node [draw, shape=circle,fill=black,color=red] (v31) at  (4,-4) {};
	
	\node [draw, shape=circle,fill=black,color=red] (v14) at  (5,-4) {};
\node [draw, shape=circle,fill=black,color=red] (v15) at  (7,-4) {};
\node [draw, shape=circle,fill=black,color=red] (v16) at  (9,-4) {};	
	\node [draw, shape=circle,fill=black,color=red] (v17) at  (11,-4) {};
	\node [draw, shape=circle,fill=black,color=red] (v18) at  (13,-4) {};
	\node [draw, shape=circle,fill=black,color=red] (v19) at  (15,-4) {};
	\node [draw, shape=circle,fill=black,color=red] (v32) at  (16,-4) {};
	
	\node [draw, shape=circle,fill=black,color=red] (v20) at  (17,-4) {};
			\node [draw, shape=circle,fill=black,color=red] (v21) at  (-1,8) {};
		\node [draw, shape=circle,fill=black,color=red] (v22) at  (1,8) {};
			\node [draw, shape=circle,fill=black,color=red] (v33) at  (0,8) {};
				\node [draw, shape=circle,fill=black,color=red] (v34) at  (-2,8) {};
		\node [draw, shape=circle,fill=black,color=red] (v23) at  (3,8) {};
		\node [draw, shape=circle,fill=black,color=red] (v24) at  (5,8) {};
		\node [draw, shape=circle,fill=black,color=red] (v25) at  (7,8) {};
		\node [draw, shape=circle,fill=black,color=red] (v26) at  (9,8) {};
				\node [draw, shape=circle,fill=black,color=red] (v35) at  (8,8) {};		
		\node [draw, shape=circle,fill=black,color=red] (v27) at  (11,8) {};
		\node [draw, shape=circle,fill=black,color=red] (v28) at  (13,8) {};
		\node [draw, shape=circle,fill=black,color=red] (v36) at  (12,8) {};
		\node [draw, shape=circle,fill=black,color=red] (v29) at  (15,8) {};
		\node [draw, shape=circle,fill=black,color=red] (v30) at  (17,8) {};
		\draw[color=blue] (v1)--(v2);
		\draw[color=blue](v2)--(v3);
		\draw[color=blue](v3)--(v4);
		\draw[color=blue](v4)--(v5);
		\draw[color=blue](v6)--(v7);
		\draw[color=blue](v7)--(v8);
		\draw[color=blue](v8)--(v9);
		\draw[color=blue](v9)--(v10);
		\draw[color=blue](v3)--(v8);
	
		\draw[color=red] (v1)--(v11);
\draw[color=red](v1)--(v12);
\draw[color=red](v2)--(v13);
\draw[color=red](v2)--(v14);
\draw[color=red](v2)--(v31);

\draw[color=red](v3)--(v15);
\draw[color=red](v3)--(v16);
\draw[color=red](v4)--(v17);
\draw[color=red](v4)--(v18);
\draw[color=red](v5)--(v19);	
\draw[color=red](v5)--(v20);	
\draw[color=red](v5)--(v32);
		\draw[color=red] (v6)--(v21);
\draw[color=red](v6)--(v22);
\draw[color=red](v6)--(v33);
\draw[color=red](v6)--(v34);
\draw[color=red](v7)--(v23);
\draw[color=red](v7)--(v24);
\draw[color=red](v8)--(v25);
\draw[color=red](v8)--(v26);
\draw[color=red](v8)--(v35);

\draw[color=red](v9)--(v27);
\draw[color=red](v9)--(v28);
\draw[color=red](v9)--(v36);
\draw[color=red](v10)--(v29);	
\draw[color=red](v10)--(v30);	
	\end{tikzpicture}

\end{subfigure}
	\caption{Trees with no $pc$-partition}\label{pic1}
\end{figure}
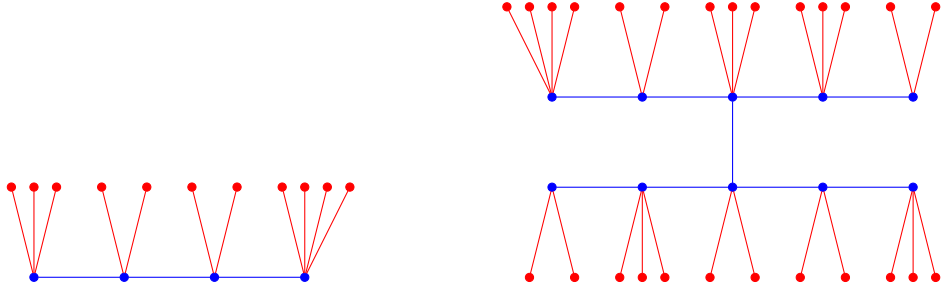

The next theorem characterizes paired coalition graphs of all trees.
\begin{theorem}
	A graph $G \neq K_1$ of order $n$ is   a $PCT$-graph if and only if $G \simeq K_{1,n-1}$.
\end{theorem}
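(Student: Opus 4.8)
The plan is to treat the two implications separately, with the forward direction resting almost entirely on Corollary~\ref{pcg-1} and the reverse direction on a single explicit construction. The guiding observation is that every tree on at least two vertices is connected and has a leaf, hence has minimum degree $1$; this places every $PCT$-graph squarely within the hypothesis of Corollary~\ref{pcg-1}.

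For the ``only if'' direction, I would argue as follows. Suppose $G$ has order $n\ge 2$ and is a $PCT$-graph, so that $G\simeq PCG(T,\pi)$ for some tree $T$ and some $pc$-partition $\pi$ of $T$ with $\lvert\pi\rvert=n$. Since $\pi$ has at least two members, $T$ has at least two vertices; being a connected acyclic graph, $T$ then has a leaf, so $\delta(T)=1$. Applying Corollary~\ref{pcg-1} with $k=n$ yields $PCG(T,\pi)\simeq K_{n-1,1}\simeq K_{1,n-1}$, and therefore $G\simeq K_{1,n-1}$. No case analysis is required here; the structural corollary does all the work.

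For the ``if'' direction I would exhibit, for each $n\ge 2$, a tree realizing $K_{1,n-1}$ as its paired coalition graph, and the natural candidate is the star itself. Take $T=K_{1,n-1}$ with center $c$ and leaves $u_1,\dots,u_{n-1}$, and let $\pi=\{\{c\},\{u_1\},\dots,\{u_{n-1}\}\}$ be the singleton partition. The verification is routine: $\{c\}$ dominates $T$ (as $c$ is a full vertex) but induces no perfect matching, so it is not a paired dominating set; no $\{u_i\}$ induces a perfect matching, so it is not a paired dominating set; each union $\{c,u_i\}$ dominates $T$ and induces a $K_2$, hence is a paired dominating set; and each union $\{u_i,u_j\}$ induces two nonadjacent leaves, so it has no perfect matching and is not a paired dominating set. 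Consequently $\{c\}$ is a $pc$-partner of every $\{u_i\}$ while no two leaf-sets are partners, so $\pi$ is indeed a $pc$-partition and $PCG(T,\pi)\simeq K_{1,n-1}$.

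I expect essentially no obstacle in this proof: the forward implication is an immediate consequence of the already-established behaviour of graphs with minimum degree $1$, and the reverse implication only asks for one explicit tree and partition per order. The only points deserving a little care are confirming that the singleton partition is a genuine $pc$-partition (each part must have a partner, which holds since $\{c\}$ pairs with every $\{u_i\}$ and each $\{u_i\}$ pairs with $\{c\}$) and noting the degenerate case $n=2$, where $K_{1,1}=K_2$ and the two singletons of $T=K_2$ form the single required coalition.
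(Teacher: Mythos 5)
Your proof is correct. The forward direction is exactly the paper's: both of you deduce $G\simeq K_{1,n-1}$ from Corollary~\ref{pcg-1}, and your added remark that a tree on at least two vertices has $\delta=1$ is the (implicit) justification the paper relies on as well. The converse is where you diverge: the paper realizes $K_{1,n-1}$ via a tree of order $n+2$, namely a double star $S_{p,q}$ with its central edge subdivided, partitioned into the path $\{x,y,z\}$ of non-leaves plus $n-1$ leaf singletons, whereas you use the star $K_{1,n-1}$ itself with the singleton partition. Your verification of the star is complete and correct ($\{c\}$ dominates but induces no perfect matching, $\{c,u_i\}$ induces a dominating $K_2$, and $\{u_i,u_j\}$ induces no edge), and your construction is both more economical and consistent with Corollary~\ref{tree-pc-n}, which already tells us stars are exactly the trees with $PC(T)=n$. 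A small bonus of your route: the paper's parametrization requires $p,q\geq 1$ and hence only literally produces the cases $n\geq 3$, while your star construction (together with your explicit remark about $K_{1,1}=K_2$) covers $n=2$ without any special pleading.
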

\begin{proof}
	It follows from Corollary \ref{pcg-1} that if $G$ is $PCT$-graph, then $G \simeq K_{1,n-1}$. To prove the converse, for each $n\geq 2$, we construct a tree $T$ and a $pc$-partition $\pi$ of $T$ such that $PCG(T,\pi)\simeq K_{1,n-1}$.  Let $T$ be a tree of order $n+2$ that is  obtained from   subdividing the edge  connecting  support vertices of a double star $S_{p,q}$, for some $p,q\geq 1$, where $\lbrace v_1,v_2,\dots v_p, u_1,u_2,\dots ,u_q\rbrace$ is the set of leaves of $S_{p,q}$, and $\lbrace x,y,z\rbrace$ is the set of non-leaf vertices of $S_{p,q}$. Then it is easily seen that the partition  $\pi =\lbrace \lbrace v_1\rbrace ,\lbrace v_2\rbrace,\dots ,\lbrace v_p\rbrace ,\lbrace u_1\rbrace ,\lbrace u_2\rbrace ,\dots ,\lbrace u_q\rbrace ,\lbrace x,y,z\rbrace \rbrace$  is a $pc$-partition for $T$, where $\lbrace x,y,z\rbrace$ forms a paired coalition with all other sets in $\pi$, and no two sets in $\pi \setminus \lbrace \lbrace x,y,z \rbrace \rbrace$ are $pc$-partners. Hence,  $PCG(T,\pi)\simeq K_{1,n-1}$.
\end{proof}
\subsection{Trees with large paired coalition number} 

A vertex partitioning of a graph $G$ of order $n$, into $n-1$ sets, yields $n-2$ singleton sets and a set of  cardinality $2$. Such a partition cannot be a $pc$-partition, since for any pair $(A,B)$ of $pc$-partners, $\lvert A\rvert +\lvert B\rvert$ must be an even number. Thus, $G$ has no $pc$-partition of order $n-1$, implying that for any graph $G$, $PC(G)\neq n-1$.
In this subsection, we characterize trees $T$ of order $n$ with $PC(T)\in \lbrace n,n-2\rbrace$.  

As an immediate result of Lemma \ref{l5} we have the following.
\begin{corollary} \label{tree-pc-n}
For a tree $T \neq K_1$ of order $n$, $PC(T)=n$ if and only if $T=K_{1,n-1}$.
\end{corollary}

\begin{theorem}  \label{tree-n-2}
	Let $T$ be a tree of order $n\geq 4$. Then $PC(T)=n-2$ if and only if either $T \simeq S_{1,p}$, for some $p\geq 1$, or  $T$ is obtained from a double star $S_{p,q}$, for some $p\geq 1$ and $q\geq 1$, by subdividing  the edge connecting  its support vertices.
\end{theorem}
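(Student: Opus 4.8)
The plan is to prove both directions of the equivalence, with the forward direction (necessity) being the substantive part. I will assume throughout that $T$ is a tree of order $n\geq 5$, and I will use the structural tools already established: Corollary \ref{coro-sup}, Corollary \ref{coro-t}, Lemma \ref{lem-sup3}, and Theorem \ref{tree-bound}.

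For the easy direction (sufficiency), I would simply exhibit a $pc$-partition of order $n-2$ for each of the two families. If $T\simeq S_{1,p}$, the support vertex of the $p$-leaf side together with its single leaf, the central path vertex, and then each remaining leaf as its own singleton, should give a partition in which the set containing all support vertices forms a paired coalition with every singleton leaf. For the subdivided double star, the partition $\pi=\{\{v_1\},\dots,\{v_p\},\{u_1\},\dots,\{u_q\},\{x,y,z\}\}$ from the preceding $PCT$-graph theorem already realizes $PCG\simeq K_{1,n-1}$ on $n$ vertices where $n$ here is the leaf count; counting carefully, the order of such a partition is exactly the original order minus $2$, so $PC(T)\geq n-2$. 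Combined with the general fact $PC(G)\neq n-1$ recalled just before the theorem, and with $PC(T)=n$ forcing a star (Corollary \ref{tree-pc-n}, which fails here since these trees are not stars), this pins $PC(T)=n-2$.

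For the hard direction (necessity), suppose $PC(T)=n-2$. Since $PC(T)\neq n-1$ always and $PC(T)=n$ iff $T$ is a star, and $n-2\geq 3$, Corollary \ref{coro-sup} applies: some member $S\in\pi$ contains all support vertices of $T$. A partition of order $n-2$ consists of $n-2$ sets summing to $n$ vertices, so the multiset of sizes is forced into a very rigid shape — essentially either one set of size $3$ with the rest singletons, or two sets of size $2$ with the rest singletons. The main engine of the argument is to show that the set $S$ of support vertices must be the unique non-singleton block (or must have a specific small size), because Corollary \ref{coro-t} forces every $pc$-partner pair to include $S$, so $S$ must be able to pair with each of the many singleton blocks, and Lemma \ref{lem-sup3} forbids $S$ from containing any leaf of a strong support vertex. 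I would analyze the size distribution case by case, using the perfect-matching requirement on $G[S\cup V_i]$ to pin down how many support vertices $T$ can have and how they are arranged.

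The main obstacle will be converting the combinatorial size constraints into the precise structural conclusion that $T$ is either $S_{1,p}$ or a subdivided double star, and in particular ruling out all other configurations of support vertices. The key tension to exploit is that $S$ must simultaneously form a paired coalition with roughly $n-3$ distinct singletons, each of which is essentially a leaf; for $S\cup\{v_i\}$ to paired-dominate $T$ and admit a perfect matching, $S$ together with one extra vertex must already dominate everything and match perfectly, which severely limits the number and adjacency pattern of support vertices. I expect to show first that $T$ has at most two support vertices, then that if there are two they must be adjacent after a single subdivision (forcing the subdivided double star), and if there is exactly one support vertex the non-support structure collapses to $S_{1,p}$; Theorem \ref{tree-bound} will be useful to bound the number of leaves at any strong support vertex and thereby exclude trees with too much leaf multiplicity.
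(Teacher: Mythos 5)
Your opening move matches the paper's: an $(n-2)$-partition of $n$ vertices must consist of either two doubletons plus $n-4$ singletons or one tripleton plus $n-3$ singletons, and the sufficiency direction is handled by exhibiting the tripleton-plus-singletons partition. But beyond that setup the proposal has a genuine gap: every substantive deduction is deferred (``I would analyze\dots'', ``I expect to show\dots''), and the tools you nominate to close it are not the ones that work. Lemma \ref{lem-sup3} and Theorem \ref{tree-bound} play essentially no role here --- Theorem \ref{tree-bound} is a \emph{lower} bound on $PC(T)$ in terms of leaf multiplicity, so with $PC(T)=n-2$ it yields only $\max_i l_i\le n-3$, which excludes nothing (it is attained with equality by $S_{1,p}$). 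The missing engine is acyclicity. In the two-doubletons case, a doubleton cannot pair with a singleton (odd union, no perfect matching), so some two singletons $\{c\},\{d\}$ must be partners; then $cd$ is a dominating edge, and since the union $A\cup B$ of the doubletons carries a matching edge, say $xy$, each of $x,y$ must have a neighbour in $\{c,d\}$, producing a cycle --- this is how the case dies. In the tripleton case the same cycle argument shows no two singletons are partners, forcing $\{x,y,z\}$ to pair with \emph{every} singleton, and the perfect-matching condition on each $T[\{x,y,z,v_i\}]$ combined with acyclicity then pins each $v_i$ to exactly one neighbour in $\{x,y,z\}$ and forbids certain vertices of the tripleton from having outside neighbours at all. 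None of this appears in your outline.

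Moreover, the structural claims you do commit to are wrong. A tree with exactly one support vertex is a star (delete the leaves; if at least two vertices remain, the resulting tree has at least two leaves, each of which is a support vertex of $T$), and stars have $PC=n$; so ``one support vertex $\Rightarrow S_{1,p}$'' cannot be the dichotomy. Both target families have exactly two support vertices --- adjacent in $S_{1,p}$, at distance $2$ in the subdivided double star --- so the correct split is by the distance between them (equivalently, by which edges are present inside the tripleton $\{x,y,z\}$), which is exactly how the paper organizes the final step. Your description of the sufficiency partition for $S_{1,p}$ is also garbled ($S_{1,p}$ has no ``central path vertex''); the partition that works is $\{u,v,t\}$ (the two support vertices together with the unique leaf of the $1$-side) plus singletons.
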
 \label{tree-n-2}
\begin{proof}
	Let $T$ be a tree of order $n$ that is obtained from a double star $S_{p,q}$ by subdividing  the edge connecting its support vertices. Let $u$ and $v$  be the support vertices of $T$, where $N(u) \cap N(v)=\lbrace s\rbrace$. Further, let $u_1,u_2,\dots ,u_p$ be the leaves adjacent to $u$, and let $v_1,v_2,\dots ,v_q$ be the leaves adjacent to $v$. Observe  that the singleton partition of $V(T)$ is not a $pc$-partition of $T$, and that  the partition 
	$\pi = \lbrace \lbrace u,s,v\rbrace, \lbrace u_1\rbrace ,\lbrace u_2\rbrace \dots , \lbrace u_p\rbrace , \lbrace v_1\rbrace ,\lbrace v_2\rbrace ,\dots ,\lbrace v_q \rbrace \rbrace$ is a $pc$-partition of $T$ of order $n-2$, where the set $\lbrace u,s,v\rbrace$ forms a paired coalition with all other sets in $\pi$. Hence,  $PC(T)=n-2$.

	Now assume $T \simeq S_{1,p}$ for some $p\geq 1$. Let $\lbrace u,v\rbrace$ be the set of support vertices of $T$ such that $N(u)=\lbrace v,t\rbrace$. Since $T$ has no full vertex, it follows from Lemma \ref{l5} that $PC(T)\neq n$. Now consider the vertex partition $\pi$ of $T$, where $\lbrace u,v,t\rbrace \in \pi$ and the remaining members of $\pi$ are singleton sets. Observe that $\pi$ is a $pc$-partition of $T$ of order $n-2$, where the set $\lbrace u,v,t\rbrace$ forms a paired coalition with all other sets in $\pi$. Hence,  $PC(T)=n-2$.

	Conversely, assume that $PC(T)=n-2$. Let $\pi$ be a $PC(T)$-partition.  The result can be readily verified when $n \in \lbrace 4,5\rbrace$, so we may assume that $n\geq 6$. 
	Consider two cases.
	
	\textbf{Case 1.} $\pi$ consists of two sets of cardinality $2$ and $n-4$ singleton sets. Let $\lbrace A,B\rbrace \subset \pi$, where $A=\lbrace x,y\rbrace$, $B=\lbrace z,t\rbrace$. Note that $A$ and $B$ must be $pc$-partners. Note also that neither $A$ nor $B$ can form a paired coalition with any singleton set in $\pi$. Let $\lbrace c\rbrace \in \pi$, and $\lbrace d\rbrace \in \pi$   be  $pc$-partners in $\pi$. Since  $T[A \cup B]$ contains a perfect matching, $x$ has a neighbor in $\lbrace y,z,t\rbrace$. Assume, without loss of generality, that $xy \in E(T[A\cup B] )$. Now since $cd$ is a dominating edge, each vertex in $\lbrace x,y \rbrace $ has a neighbor in $\lbrace c,d\rbrace$, which creates cycle. Hence, this case is impossible.
	
	\textbf{Case 2.} $\pi$ consists of a set of cardinality  $3$ and $n-3$ singleton sets. Let $\pi =\lbrace A,V_1,V_2, \dots ,V_{n-3}\rbrace$, where  $A=\lbrace x,y,z\rbrace$ and  $V_i=\lbrace v_i\rbrace$ for each $1\leq i\leq n-3$, and let $V_j$ be a $pc$-partner of $A$. Note that $T[A]$ is not an empty graph, for otherwise, $T[A \cup V_j]$ would not have a perfect matching. Assume, without loss of generality, that $xy \in E(T[A])$. Now we show that no two sets in $\pi \setminus \lbrace A\rbrace$ are $pc$-partners. Suppose that the converse is true. Let $V_k =\lbrace v_k\rbrace$ and $V_l=\lbrace v_l\rbrace$ be $pc$-partners in $\pi$. Since $x,y$ are dominated by $ V_k \cup V_l $, $T[\lbrace x,y,v_k,v_l\rbrace]$ contains a cycle, a contradiction. Hence, $A$ forms a paired coalition with all other sets in $\pi$.
	
	 If $E(T[A])=\lbrace xy\rbrace$, then considering the fact that for each $1\leq i\leq n-3$, $T[\lbrace x,y,z,v_i\rbrace]$ has a perfect matching, we deduce that  $zv_i \in E(T)$, for each $1\leq i\leq n-3$.  This implies that the vertices in $\lbrace v_1,v_2,\dots ,v_{n-3}\rbrace$ are pairwise nonadjacent, and that $\lvert [\lbrace x,y\rbrace , V(T) \setminus  A]\rvert=1$,  as desired. If $E(T[A]) \neq \lbrace xy\rbrace$, then either $xz \in E(T[A])$ or $yz \in E(T[A])$. By symmetry, we assume  $xz \in E(T[A])$. Since $A$ forms a paired coalition with all other sets of $\pi$, each vertex in $\lbrace v_1,v_2,\dots ,v_{n-3}\rbrace$ has a neighbor in $\lbrace x,y,z\rbrace$. So in order to avoid cycles, all vertices in $\lbrace v_1,v_2,\dots ,v_{n-3}\rbrace$ must be pairwise nonadjacent, and each vertex in $\lbrace v_1,v_2,\dots ,v_{n-3}\rbrace$ must have  one neighbor in $\lbrace x,y,z\rbrace$. Further, note that $x$ is not adjacent to any vertex  $ v_k \in \lbrace v_1,v_2,\dots ,v_{n-3}\rbrace$, for otherwise, $T[\lbrace x,y,z,v_k\rbrace ]$ would not have a perfect matching. Thus, each vertex in $\lbrace v_1,v_2,\dots ,v_{n-3}\rbrace$ is adjacent to either $y$ or $z$. If $N_T (\lbrace v_1,v_2,\dots ,v_{n-3} \rbrace) = \lbrace y\rbrace$ or $N_T (\lbrace v_1,v_2,\dots ,v_{n-3} \rbrace) = \lbrace z\rbrace$, then $T$ is a double star $S_{1,p}$ of order $n$, for some $p\geq 1$. Otherwise, $T$ is obtained from a double star $S_{p,q}$, for some $p\geq 1$ and $q\geq 1$, by subdividing  the edge connecting  its support vertices. So the proof is  complete.
\end{proof}

\subsection{Perfect  binary trees} 
A {\em perfect binary tree} is a binary tree in which  all leaves have same distance to the root vertex. Given a perfect binary tree $T$, we call the distance between  root vertex and each leaf, height of $T$. If $T$ has height $h$, then we denote it by $T(h)$. It is clear that $PC(T(1))=3$. Further, by Theorem \ref{tree-n-2},  $PC(T(2))=5$, and by Theorem \ref{tree-bound},  either $PC(T(h))=0$ or $PC(T(h))\geq 3$. In this subsection, we derive an upper bound on $PC(T(h))$, for $h\geq 3$, and determine exact values of $PC(T(3))$ and $PC(T(4))$.

 \begin{lemma}
 	If $h\geq 3$, then $PC(T(h)) \leq 4$.
 \end{lemma}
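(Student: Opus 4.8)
The plan is to bound the number of $pc$-partners that a distinguished set can have, and then invoke the structural constraints already established for graphs with leaves. Since a perfect binary tree $T(h)$ with $h\ge 3$ has many strong support vertices (every vertex at distance $h-1$ from the root has two leaf children), Observation \ref{obs-two} gives $PC(T(h))\ge 3$, so I may assume a $PC(T(h))$-partition $\pi$ exists. By Corollary \ref{coro-sup}, some member $S\in\pi$ contains all support vertices, and by Corollary \ref{coro-t} every $pc$-pair uses $S$. Hence $PCG(T(h),\pi)\simeq K_{k-1,1}$ (this is Corollary \ref{pcg-1}), so $S$ is the unique hub and $|\pi|=k$ is exactly $1$ plus the number of distinct $pc$-partners of $S$. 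The whole problem therefore reduces to showing that $S$ can form a paired coalition with at most three other sets.

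First I would pin down the structure of $S$. By Lemma \ref{lem-sup3}, $S$ contains no leaf; combined with Corollary \ref{coro-sup} this forces $S$ to be exactly the set of non-leaf vertices of some induced configuration, and in particular every leaf of $T(h)$ lies outside $S$ and must be both dominated and matched by $S\cup V_j$ whenever $V_j$ is a partner. The key local observation is that a leaf $u$ adjacent to a support vertex $w\in S$ must be matched inside $G[S\cup V_j]$; since $u$'s only neighbor is $w$, either $u\in S$ (excluded) or $u$ is matched to $w$, which means $w$ is consumed by $u$ and cannot simultaneously serve as the match-partner in two different partner sets. I would quantify how many independent "matching demands" the leaves impose on $S$ and show that these demands are incompatible across four distinct partners: each partner $V_j$ must supply, via $S\cup V_j$, a perfect matching covering all of $T(h)$, and the parity/matching obligations at the leaves and at the vertices just above the leaves leave room for at most three mutually distinct partner sets.

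Concretely, the main counting step is to examine the deepest two levels. Fix a support vertex $w\in S$ with its two leaf children; in any partner $V_j$ the induced subgraph $G[S\cup V_j]$ has a perfect matching, and I would argue that matching the two leaves of $w$ forces a rigid local choice that ties up $w$ and propagates a constraint up the tree, so that the partners of $S$ are distinguished only by a bounded number of independent binary choices. Carrying this through should show that no five-set $pc$-partition survives, giving $k\le 4$. The hard part will be the combinatorial bookkeeping of the perfect-matching constraints across the whole tree: unlike the path and cycle cases, here the matching obligations interact globally through the branching structure, so I expect the delicate point to be verifying that a hypothetical fourth distinct partner (beyond the three that can coexist) always creates either an unmatched leaf or a vertex of $S$ that two partners demand to match in conflicting ways. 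I would isolate that conflict as a short self-contained claim and then conclude $PC(T(h))\le 4$.
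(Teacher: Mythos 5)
Your reduction is sound and matches the paper's framework: by Observation \ref{obs-two}, Corollary \ref{coro-sup}, Corollary \ref{coro-t} and Lemma \ref{lem-sup3}, there is a hub set $S$ containing all support vertices and no leaves, every other class must be a $pc$-partner of $S$, and the problem becomes bounding the number of partners of $S$. But the proposal stops exactly where the proof has to start: the ``combinatorial bookkeeping'' you defer is the entire content of the lemma, and the local observation you propose to build on is the wrong one. A leaf $u$ outside $S\cup V_j$ does not need to be matched at all --- it only needs to be dominated, and it is automatically dominated by its support vertex in $S$ --- so ``matching the two leaves of $w$ ties up $w$'' does not by itself produce any conflict between distinct partners; each partner $V_j$ is free to use a \emph{different} leaf of $w$ as the match for $w$.

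The mechanism the paper actually uses is the opposite scenario: take any copy of $T(2)$ sitting at the bottom of $T(h)$ (root $v_1$, support vertices $v_2,v_3\in S$, four leaves $v_4,\dots,v_7\notin S$). The four leaves are distributed among the non-hub classes, so if there are at least five non-hub classes (i.e.\ $|\pi|\ge 6$), some partner $V_i$ contains \emph{none} of them; then in $G[S\cup V_i]$ both $v_2$ and $v_3$ can only be matched to $v_1$, so no perfect matching exists, contradicting Corollary \ref{coro-t}. This gives $PC(T(h))\le 5$. Excluding $|\pi|=5$ then requires a separate, finer count: with four non-hub classes and eight leaves per bottom-level $T(3)$-block, each class meets each four-leaf group, and a parity analysis of the forced matchings shows every support vertex must be matched downward, leaving the root of the block unmatched. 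Neither of these steps appears in your proposal, and the ``bounded number of independent binary choices'' you invoke is never made precise, so as written the argument does not establish the bound.
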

\begin{proof}
	First we show that $PC(T(h))\leq 5$. Suppose, to the contrary, that $PC(T(h))\geq 6$. Let $\pi$ be a $PC(T(h))$-partition, and let $ V_1 \in \pi$ be the set containing support vertices of $T(h)$.  Consider a subgraph $T^\prime \subset T(h)$, where $T^\prime \simeq T(2)$, and the leaves in $T^\prime$ are also leaves in $T(h)$ (The graph $T^\prime$ is illustrated in Figure 2). Note that $\lbrace v_2,v_3\rbrace \subset V_1$. Since $PC(T(h))\geq 6$, there exists a set $V_i \in \pi \setminus \lbrace V_1\rbrace$, such that $V_i \cap \lbrace v_4,v_5,v_6,v_7\rbrace =\emptyset$. Further, by Lemma \ref{lem-sup3},  $V_1 \cap \lbrace v_4,v_5,v_6,v_7\rbrace =\emptyset$. So $N_{T(h) [V_1 \cup V_i]} (v_2) \cup N_{T(h) [V_1 \cup V_i]} (v_3) \subseteq \lbrace v_1\rbrace$, which means that $T(h) [V_1 \cup V_i]$ has no perfect matching. But,  Corollary \ref{coro-t} implies that $V_i$ must form a paired coalition with $V_1$, a contradiction.
	Now we show that  $PC(T(h))\neq 5$. Suppose, to the contrary, that $PC(T(h))=5$. Let $\pi =\lbrace V_1,V_2,V_3,V_4,V_5\rbrace$ be a $PC(T(h))$-partition, and let $V_1$ be the set containing support vertices of $T(h)$. Consider two cases.
	
	\textbf{Case 1.} $h=3$. We label the vertices of $T(3)$ as shown in Figure 3.
	Note that $\lbrace v_4,v_5,v_6,v_7\rbrace \subset V_1$. As discussed above, for each set $V_i \in \pi \setminus \lbrace V_1\rbrace$, $V_i \cap \lbrace v_8,v_9,v_{10},v_{11}\rbrace \neq \emptyset$, and  $V_i \cap \lbrace v_{12},v_{13},v_{14},v_{15}\rbrace \neq \emptyset$. Therefore, each set in $\pi \setminus \lbrace V_1\rbrace$ contains exactly one vertex from $\lbrace v_8,v_9,v_{10},v_{11}\rbrace$, and one vertex from $\lbrace v_{12},v_{13},v_{14},v_{15}\rbrace$. Considering the fact that the induced subgraphs $T(h) [V_1 \cup V_2]$ and $T(h) [V_1 \cup V_3]$ contain a perfect matching, we deduce that $\lbrace v_2,v_3\rbrace \subset V_1$. Now Considering the fact that  $T(h) [V_1 \cup V_2]$ contains a perfect matching, we deduce that the vertices $v_2$ and $v_3$ are matched to one of their children which implies that $v_1$ is not matched to any other vertices, a contradiction.
	
	\textbf{Case 2.} $h >3$.
	Consider a subgraph $T^\prime \subseteq T(h)$, where $T^\prime \simeq T(4)$, and the leaves in $T^\prime$ are also leaves in $T(h)$ (The graph $T^\prime$ is illustrated in Figure 5). Note that $\lbrace v_8,v_9,v_{10},v_{11},v_{12},v_{13},v_{14},v_{15}\rbrace \subset V_1$. As discussed in the previous case,  $\lbrace v_4,v_5,v_6,v_7\rbrace \subset V_1$, and each vertex in $\lbrace v_4,v_5,v_6,v_7\rbrace$ is matched to one of its children. Hence, both $v_2$ and $v_3$ must be matched to $v_1$ which is a contradiction.
\end{proof}
		\begin{figure}[!htbp]
	\centering
	\begin{tikzpicture}[scale= .15, transform shape]
		\node [draw, shape=circle,fill=black,scale=2] (v1) at  (0,20) {};
		\node [draw, shape=circle,fill=black,scale=2] (v2) at  (-10,16) {};
		\node [draw, shape=circle,fill=black,scale=2] (v3) at  (10,16) {};
		\node [draw, shape=circle,fill=black,scale=2] (v4) at  (-15,12) {};
		\node [draw, shape=circle,fill=black,scale=2] (v5) at  (-5,12) {};
		\node [draw, shape=circle,fill=black,scale=2] (v6) at  (5,12) {};
		\node [draw, shape=circle,fill=black,scale=2] (v7) at  (15,12) {};
		
		\node [scale=6] at (0,21.5) {$v_1$};
		\node [scale=6] at (-10,17.5) {$v_2$};
		\node [scale=6] at (10,17.5) {$v_3$};
		\node [scale=6] at (-15,10.5) {$v_4$};
		\node [scale=6] at (-5,10.5) {$v_5$};
		\node [scale=6] at (5,10.5) {$v_6$};
		\node [scale=6] at (15,10.5) {$v_7$};
		\draw(v1)--(v2);
		\draw(v1)--(v3);
		\draw(v2)--(v4);
		\draw(v2)--(v5);
		\draw(v3)--(v6);
		\draw(v3)--(v7);

	\end{tikzpicture}
	\caption{The tree $T(2)$}\label{tree1}
\end{figure}
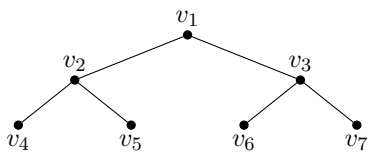
 \begin{figure}[!htbp]
	\centering
	
	\begin{tikzpicture}[scale=.3, transform shape]
		\node [draw, shape=circle,fill=black] (v1) at  (-5,10) {};
		\node [draw, shape=circle,fill=black] (v2) at  (-11,8) {};
		\node [draw, shape=circle,fill=black] (v3) at  (1,8) {};
		\node [draw, shape=circle,fill=black] (v4) at  (-14,6) {};
		\node [draw, shape=circle,fill=black] (v5) at  (-8,6) {};
		\node [draw, shape=circle,fill=black] (v6) at  (-2,6) {};
		\node [draw, shape=circle,fill=black] (v7) at  (4,6) {};
		\node [draw, shape=circle,fill=black] (v8) at  (-15.5,4) {};
		\node [draw, shape=circle,fill=black] (v9) at  (-12.5,4) {};
		\node [draw, shape=circle,fill=black] (v10) at  (-9.5,4) {};
		\node [draw, shape=circle,fill=black] (v11) at  (-6.5,4) {};
		\node [draw, shape=circle,fill=black] (v13) at  (-3.5,4) {};
		\node [draw, shape=circle,fill=black] (v12) at  (-0.5,4) {};
		\node [draw, shape=circle,fill=black] (v14) at  (2.5,4) {};
		\node [draw, shape=circle,fill=black] (v15) at  (5.5,4) {};
		
		\node [scale=3] at (-5,10.7) {$v_1$};
		\node [scale=3] at (-11,8.7) {$v_2$};
		\node [scale=3] at (1,8.7) {$v_3$};
		\node [scale=3] at (-14,6.7) {$v_4$};
		\node [scale=3] at (-8,6.7) {$v_5$};
		\node [scale=3] at (-2,6.7) {$v_6$};
		\node [scale=3] at (4,6.7) {$v_7$};
		\node [scale=3] at (-15.5,3.3) {$v_8$};
		\node [scale=3] at (-12.5,3.3) {$v_9$};
		\node [scale=3] at (-9.5,3.3) {$v_{10}$};
		\node [scale=3] at (-6.5,3.3) {$v_{11}$};
		\node [scale=3] at (-3.5,3.3) {$v_{12}$};
		\node [scale=3] at (-0.5,3.3) {$v_{13}$};		
		\node [scale=3] at (2.5,3.3) {$v_{14}$};
		\node [scale=3] at (5.5,3.3) {$v_{15}$};
		
		\draw(v1)--(v2);
		\draw(v1)--(v3);
		\draw(v2)--(v4);
		\draw(v2)--(v5);
		\draw(v3)--(v6);
		\draw(v3)--(v7);			
		\draw(v1)--(v2);
		\draw(v4)--(v8);
		\draw(v4)--(v9);
		\draw(v5)--(v10);
		\draw(v5)--(v11);
		\draw(v6)--(v12);
		\draw(v6)--(v13);
		\draw(v7)--(v14);
		\draw(v7)--(v15);								
	\end{tikzpicture}
	\caption{The tree $T(3)$}\label{tree2}
\end{figure}
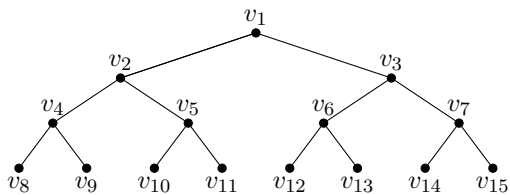
\begin{figure}[!htbp]
	\centering
	
	\begin{tikzpicture}[scale=.3, transform shape]
		\node [draw, shape=circle,fill=black] (v1) at  (-5,10) {};
		\node [draw, shape=circle,fill=black] (v2) at  (-11,8) {};
		\node [draw, shape=circle,fill=black] (v3) at  (1,8) {};
		\node [draw, shape=circle,fill=black] (v4) at  (-14,6) {};
		\node [draw, shape=circle,fill=black] (v5) at  (-8,6) {};
		\node [draw, shape=circle,fill=black] (v6) at  (-2,6) {};
		\node [draw, shape=circle,fill=black] (v7) at  (4,6) {};
		\node [draw, shape=circle,fill=black] (v8) at  (-15.5,4) {};
		\node [draw, shape=circle,fill=black] (v9) at  (-12.5,4) {};
		\node [draw, shape=circle,fill=black] (v10) at  (-9.5,4) {};
		\node [draw, shape=circle,fill=black] (v11) at  (-6.5,4) {};
		\node [draw, shape=circle,fill=black] (v13) at  (-3.5,4) {};
		\node [draw, shape=circle,fill=black] (v12) at  (-0.5,4) {};
		\node [draw, shape=circle,fill=black] (v14) at  (2.5,4) {};
		\node [draw, shape=circle,fill=black] (v15) at  (5.5,4) {};
		
		\node [scale=3] at (-5,10.7) {1};
		\node [scale=3] at (-11,8.7) {2};
		\node [scale=3] at (1,8.7) {3};
		\node [scale=3] at (-14,6.7) {1};
		\node [scale=3] at (-8,6.7) {1};
		\node [scale=3] at (-2,6.7) {1};
		\node [scale=3] at (4,6.7) {1};
		\node [scale=3] at (-15.5,3.3) {2};
		\node [scale=3] at (-12.5,3.3) {3};
		\node [scale=3] at (-9.5,3.3) {2};
		\node [scale=3] at (-6.5,3.3) {3};
		\node [scale=3] at (-3.5,3.3) {2};
		\node [scale=3] at (-0.5,3.3) {3};		
		\node [scale=3] at (2.5,3.3) {2};
		\node [scale=3] at (5.5,3.3) {3};
		
		\draw(v1)--(v2);
		\draw(v1)--(v3);
		\draw(v2)--(v4);
		\draw(v2)--(v5);
		\draw(v3)--(v6);
		\draw(v3)--(v7);			
		\draw(v1)--(v2);
		\draw(v4)--(v8);
		\draw(v4)--(v9);
		\draw(v5)--(v10);
		\draw(v5)--(v11);
		\draw(v6)--(v12);
		\draw(v6)--(v13);
		\draw(v7)--(v14);
		\draw(v7)--(v15);								
	\end{tikzpicture}
	
	\caption{A $pc$-partition of order 3 for $T(3)$}\label{tree3}
\end{figure}
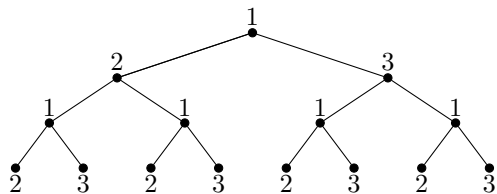
 \begin{proposition} \label{prop-t3}
 	$PC(T(3))=3$.
 \end{proposition}
\begin{proof}
	Figure \ref{tree3} illustrates a $pc$-partition of $T(3)$ of order $3$. So, it remains to show that $PC(T(3))\neq 4$. Suppose, to the contrary, that $PC(T(3))=4$. Let $\pi =\lbrace V_1,V_2,V_3,V_4\rbrace$ be a $PC(T(3))$-partition, and let $V_1$ be the set containing support vertices of $T(3)$.  We label the vertices of $T(3)$ as shown in Figure \ref{tree2}. By Lemma \ref{lem-sup3},  $V_1 \cap \lbrace v_8,v_9,v_{10},v_{11}\rbrace =\emptyset$ and $V_1 \cap \lbrace v_{12},v_{13},v_{14},v_{15}\rbrace =\emptyset$. Note that
	as discussed above, for each set $V_i \in \pi \setminus \lbrace V_1\rbrace$, $V_i \cap \lbrace v_8,v_9,v_{10},v_{11}\rbrace \neq \emptyset$, and  $V_i \cap \lbrace v_{12},v_{13},v_{14},v_{15}\rbrace \neq \emptyset$. Note also that any two leaves with the same parent must be in different sets of $\pi$. Let $A_2$, $A_3$ and $A_4$ denote the number of leaves in $V_2$,$V_3$ and $V_4$, respectively. Therefore
	\[
	\begin{cases}
		A_2 + A_3 +A_4=8 \\
		A_i \geq 2,\  i=2,3,4.
	\end{cases}
	\]
	By symmetry, it suffices to examine the following  two cases.
	
	\textbf{Case 1.} $A_2 =2$, $A_3=2$ and $A_4=4$. It follows that every support vertex of $T(h)$ has a child that is in $V_4$. Therefore, considering a perfect matching in $T(3) [V_1 \cup V_4]$, each support vertex must be matched to one of its children. Hence, both $v_2$ and $v_3$ must be matched to $v_1$ which is a contradiction.
	
	\textbf{Case 2.} $A_2 =2$, $A_3=3$ and $A_4=3$. Considering a perfect matching in $T(3) [V_1 \cup V_2]$, we observe that the vertices $v_2$ and $v_3$ must be matched to one of their children, which implies that $v_1$ is not matched to any other vertices, a contradiction.
\end{proof}

 \begin{proposition}
	$PC(T(4))=0$.
\end{proposition}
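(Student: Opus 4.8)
The plan is to assume, for contradiction, that $T(4)$ admits a $pc$-partition $\pi$ and to derive a matching obstruction at the root. The strong support vertices of $T(4)$ are exactly the eight vertices at level $3$, each carrying two leaves, so Theorem \ref{tree-bound} forces $PC(T(4))\ge 3$ whenever a $pc$-partition exists, while the lemma above establishing $PC(T(h))\le 4$ for $h\ge 3$ forces $PC(T(4))\le 4$. Hence it suffices to rule out $|\pi|=3$ and $|\pi|=4$. In either case, Corollary \ref{coro-sup} gives a member $V_1\in\pi$ containing all support vertices, Lemma \ref{lem-sup3} shows $V_1$ contains no leaf, and Corollary \ref{coro-t} shows that every other member of $\pi$ is a $pc$-partner of $V_1$; in particular $T(4)[V_1\cup V_i]$ must admit a perfect matching for each $V_i\in\pi\setminus\{V_1\}$.

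The heart of the proof is a bottom-up analysis of these perfect matchings. First I would record the basic constraint that the two leaves below a common support cannot both lie in a single partner $V_i$ (otherwise one of them is unmatched), so the two leaves of each support are split among the non-$V_1$ classes. I then trace the forced matching upward through the four levels: each leaf present in $V_1\cup V_i$ must be matched to its support; a support with no leaf present in $V_1\cup V_i$ has only its level-$2$ parent left as a neighbour and so must be matched upward; since a level-$2$ vertex can absorb at most one of its two support children, this in turn controls how the level-$2$, then level-$1$, and finally the root vertices are matched. The objective is to exhibit a single partner $V_i$ for which the root must be matched but both of its level-$1$ neighbours are already matched downward, which is impossible.

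For $|\pi|=4$ I would formalize this by assigning to each support $s$ its \emph{missing class} $m(s)\in\{2,3,4\}$, the unique class holding none of its two leaves. The split property forces the two supports under each level-$2$ vertex to have distinct missing classes (equal classes would leave both of them unmatchable for one partner), so that this pair of classes, $M_w\subseteq\{2,3,4\}$, has two elements; running the forced up-matching for the two partners indexed by $M_w$ then forces each level-$2$ vertex into $V_1$, and repeating the argument one level higher forces each level-$1$ vertex into $V_1$. A short pigeonhole count comparing the three partner indices $\{2,3,4\}$ with the two complement classes $\{2,3,4\}\setminus M$ attached to the level-$1$ vertices produces a partner for which the root cannot be matched. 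The case $|\pi|=3$ has a different flavour: there each support has exactly one leaf in each of the two non-$V_1$ classes, so every support is matched downward for both partners, and the remaining vertices to be matched inside $V_1\cup V_i$ are precisely the internal vertices present, which lie in a copy of $T(2)$; analysing the induced perfect matchings on these seven ``top'' vertices pins the two level-$1$ vertices into $V_1$ and again traps the root.

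I expect the main obstacle to be the bookkeeping in the $|\pi|=4$ case, since the up-matching argument only bites when the relevant parent actually lies in $V_1\cup V_i$, so one must track which internal vertices are forced to be present. The device that makes this rigorous is that a support with missing class $i$ has, inside $V_1\cup V_i$, no available neighbour other than its level-$2$ parent, which therefore must be present; as this happens for two distinct partner indices (the two elements of $M_w$), the parent is forced all the way into $V_1$, and it is exactly this ``forced for two indices, hence in $V_1$'' cascade from level $2$ up to the root, rather than any single local check, that must be set up carefully.
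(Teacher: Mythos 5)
Your proposal is correct, and it reaches the contradiction at the root by a route that differs from the paper's in the $|\pi|=4$ case. The paper's proof counts, for each of the two subtrees hanging off the root, how many leaves fall into each of $V_2,V_3,V_4$ (sums equal to $8$ with each count at least $2$), and then splits into "some count equals $4$" (which triggers the cascade from the $T(3)$ analysis) versus "some index has count $3$ in both subtrees" (which leaves the root unmatched); your proof instead attaches to each support vertex its \emph{missing class} and propagates the constraint "forced to be present for two distinct partners, hence in $V_1$" up through levels $2$ and $1$ before applying a pigeonhole at the root. The two arguments are cousins — both ultimately strand the root in some $T(4)[V_1\cup V_i]$ — but yours is more local and, notably, more careful about a point the paper glosses over: the paper's "$v_1$ is not matched" step tacitly assumes the relevant internal vertices (including the root) lie in $V_1\cup V_i$, whereas your two-index argument actually \emph{proves} that the level-$1$ vertices and the root land where the obstruction needs them (either in $V_1$, giving an unmatched vertex, or outside both $V_1\cup V_{d_1}$ and $V_1\cup V_{d_2}$, giving an unmatchable child). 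Your $|\pi|=3$ case is essentially identical to the paper's: split leaves force the level-$2$ vertices out of $V_1$ and into opposite classes, which forces the level-$1$ vertices into $V_1$ and matched downward, trapping the root. The price of your approach is heavier notation ($m(s)$, $M_w$, complement classes); what it buys is a uniform cascade that generalizes more readily to the even-height conjecture the authors state at the end of the subsection.
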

		\begin{figure}[!htbp]
	\centering
	\begin{tikzpicture}[scale=.3, transform shape]
		\node [draw, shape=circle,fill=black] (v1) at  (0,20) {};
		\node [draw, shape=circle,fill=black] (v2) at  (-12,16) {};
		\node [draw, shape=circle,fill=black] (v3) at  (12,16) {};
		\node [draw, shape=circle,fill=black] (v4) at  (-18,12) {};
		\node [draw, shape=circle,fill=black] (v5) at  (-6,12) {};
		\node [draw, shape=circle,fill=black] (v6) at  (6,12) {};
		\node [draw, shape=circle,fill=black] (v7) at  (18,12) {};
		\node [draw, shape=circle,fill=black] (v8) at  (-21,8) {};
		\node [draw, shape=circle,fill=black] (v9) at  (-15,8) {};
		\node [draw, shape=circle,fill=black] (v10) at  (-9,8) {};
		\node [draw, shape=circle,fill=black] (v11) at  (-3,8) {};
		\node [draw, shape=circle,fill=black] (v12) at  (3,8) {};
		\node [draw, shape=circle,fill=black] (v13) at  (9,8) {};
		\node [draw, shape=circle,fill=black] (v14) at  (15,8) {};
		\node [draw, shape=circle,fill=black] (v15) at  (21,8) {};
		
		\node [draw, shape=circle,fill=black] (v16) at  (-22.5,4) {};
		\node [draw, shape=circle,fill=black] (v17) at  (-19.5,4) {};
		\node [draw, shape=circle,fill=black] (v18) at  (-16.5,4) {};
		\node [draw, shape=circle,fill=black] (v19) at  (-13.5,4) {};
		\node [draw, shape=circle,fill=black] (v20) at  (-10.5,4) {};
		\node [draw, shape=circle,fill=black] (v21) at  (-7.5,4) {};
		\node [draw, shape=circle,fill=black] (v22) at  (-4.5,4) {};
		\node [draw, shape=circle,fill=black] (v23) at  (-1.5,4) {};
		\node [draw, shape=circle,fill=black] (v24) at  (1.5,4) {};
		\node [draw, shape=circle,fill=black] (v25) at  (4.5,4) {};
		\node [draw, shape=circle,fill=black] (v26) at  (7.5,4) {};
		\node [draw, shape=circle,fill=black] (v27) at  (10.5,4) {};
		\node [draw, shape=circle,fill=black] (v28) at  (13.5,4) {};
		\node [draw, shape=circle,fill=black] (v29) at  (16.5,4) {};
		\node [draw, shape=circle,fill=black] (v30) at  (19.5,4) {};
		\node [draw, shape=circle,fill=black] (v31) at  (22.5,4) {};

		\node [scale=3] at (0,20.7) {$v_1$};
		\node [scale=3] at (-12,16.7) {$v_2$};
		\node [scale=3] at (12,16.7) {$v_3$};
		\node [scale=3] at (-18,12.7) {$v_4$};
		\node [scale=3] at (-6,12.7) {$v_5$};
		\node [scale=3] at (6,12.7) {$v_6$};
		\node [scale=3] at (18,12.7) {$v_7$};
		\node [scale=3] at (-21.5,8.7) {$v_8$};
		\node [scale=3] at (-14.5,8.7) {$v_9$};
		\node [scale=3] at (-9.5,8.7) {$v_{10}$};
		\node [scale=3] at (-2.5,8.7) {$v_{11}$};
		\node [scale=3] at (2.5,8.7) {$v_{12}$};
		\node [scale=3] at (9.5,8.7) {$v_{13}$};		
		\node [scale=3] at (14.5,8.7) {$v_{14}$};
		\node [scale=3] at (21.5,8.7) {$v_{15}$};
		\node [scale=3] at (-22.5,3.3) {$v_{16}$};
		\node [scale=3] at (-19.5,3.3) {$v_{17}$};
		\node [scale=3] at (-16.5,3.3) {$v_{18}$};
		\node [scale=3] at (-13.5,3.3) {$v_{19}$};
		\node [scale=3] at (-10.5,3.3) {$v_{20}$};
		\node [scale=3] at (-7.5,3.3) {$v_{21}$};
		\node [scale=3] at (-4.5,3.3) {$v_{22}$};
		\node [scale=3] at (-1.5,3.3) {$v_{23}$};
		\node [scale=3] at (1.5,3.3) {$v_{24}$};
		\node [scale=3] at (4.5,3.3) {$v_{25}$};
		\node [scale=3] at (7.5,3.3) {$v_{26}$};
		\node [scale=3] at (10.5,3.3) {$v_{27}$};
		\node [scale=3] at (13.5,3.3) {$v_{28}$};
		\node [scale=3] at (16.5,3.3) {$v_{29}$};		
		\node [scale=3] at (19.5,3.3) {$v_{30}$};
		\node [scale=3] at (22.5,3.3) {$v_{31}$};
		\draw(v1)--(v2);
		\draw(v1)--(v3);
		\draw(v2)--(v4);
		\draw(v2)--(v5);
		\draw(v3)--(v6);
		\draw(v3)--(v7);			
		\draw(v1)--(v2);
		\draw(v4)--(v8);
		\draw(v4)--(v9);
		\draw(v5)--(v10);
		\draw(v5)--(v11);
		\draw(v6)--(v12);
		\draw(v6)--(v13);
		\draw(v7)--(v14);
		\draw(v7)--(v15);								
		
		\draw(v1)--(v2);
		\draw(v8)--(v16);
		\draw(v8)--(v17);
		\draw(v9)--(v18);
		\draw(v9)--(v19);
		\draw(v10)--(v20);			
		\draw(v10)--(v21);
		\draw(v11)--(v22);
		\draw(v11)--(v23);
		\draw(v12)--(v24);
		\draw(v12)--(v25);
		\draw(v13)--(v26);
		\draw(v13)--(v27);
		\draw(v14)--(v28);
		\draw(v14)--(v29);		
		\draw(v15)--(v30);
		\draw(v15)--(v31);							
	\end{tikzpicture}
	\caption{The tree $T(4)$}\label{tree4}
\end{figure}
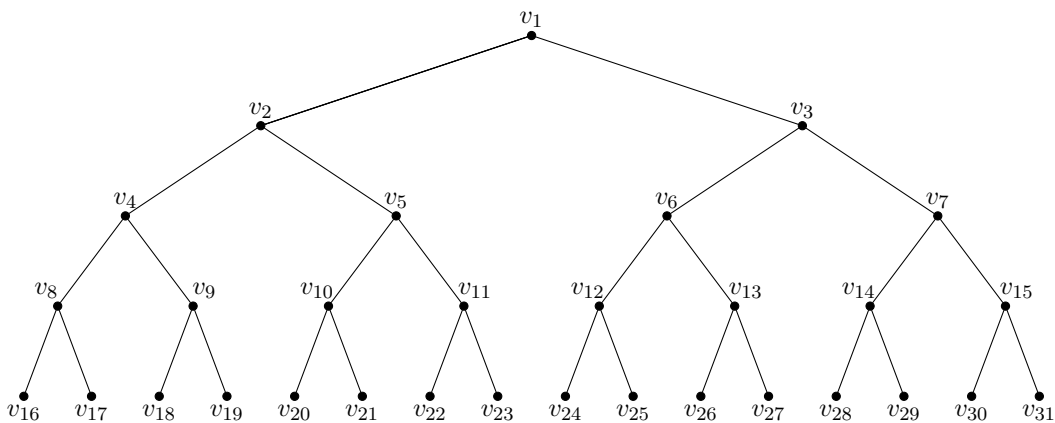
\begin{proof}
	First we show that $PC(T(4))\neq 4$. Suppose, to the contrary, that $PC(T(4))=4$. Let $\pi =\lbrace V_1,V_2,V_3,V_4\rbrace$ be a $PC(T(4))$-partition, and let $V_1$ be the set containing support vertices of $T(4)$. We label the vertices of $T(4)$ as shown in Figure \ref{tree4}.  We denote the two connected components in $T(4) - v_1$ by $T_1$ and $T_2$. Further, Let $A_2$, $A_3$ and $A_4$ denote the number of leaves of $T_1$ in $V_2$,$V_3$ and $V_4$, respectively, and let $B_2$, $B_3$ and $B_4$ denote the number of leaves of $T_2$  in $V_2$,$V_3$ and $V_4$, respectively. As discussed above, we have
	\[
	\begin{cases}
		A_2 + A_3 +A_4=8 \\
		A_i \geq 2, \ i=2,3,4.
	\end{cases}
	\]
	\[
	\begin{cases}
		B_2 + B_3 +B_4=8 \\
		B_i \geq 2, \ i=2,3,4.
	\end{cases}
	\]	
	If there exists a member $k \in \lbrace A_2,A_3,A_4,B_2,B_3,B_4\rbrace$ such that $k=4$, then as discussed in Proposition \ref{prop-t3}, we derive a contradiction. Hence, there exists an index $i$ such that $A_i=B_i=3$. Now considering a perfect matching in $T(4)[V_1 \cup V_i]$, we observe that $v_1$ is not matched to any other vertices, a contradiction.
	
	 Now we show that $PC(T(4)) \neq 3$. Suppose, to the contrary, that $PC(T(4))=3$. Let $\pi =\lbrace V_1,V_2,V_3\rbrace$ be a $PC(T(4))$-partition, and let $V_1$ be the set containing support vertices of $T(4)$. Note that the leaves attached to each support vertex must be in different sets of $\pi$. Thus, one of them must be in $V_2$ and the other one must be in $V_3$. We claim that $v_4 \notin V_1$. Suppose, to the contrary, that $v_4 \in V_1$. Considering a perfect matching in $T[V_1 \cup V_2]$ and a perfect matching in $T[V_1 \cup V_3]$, we deduce that $v_2 \in V_1$. If $v_5 \in V_1$, then considering a perfect matching in $T[V_1 \cup V_2]$, we deduce that both $v_4$ and $v_5$ must be matched to $v_2$, which is impossible. Hence, $v_5 \notin V_1$. Now  if $v_5 \in V_2$, then again considering a perfect matching in $T[V_1 \cup V_2]$, we deduce that both $v_4$ and $v_5$ must be matched to $v_2$, which is impossible. Hence, $v_5 \notin V_2$.
	 
	  A similar argument shows that $v_5 \notin V_3$, a contradiction. Hence, $v_4 \notin V_1$. Similarly, we can show that $v_5 \notin V_1$. Note that $v_4$ and $v_5$ cannot both be in $V_2$. For otherwise, considering a perfect matching in $T[V_1 \cup V_2]$, both $v_4$ and $v_5$ must be matched to $v_2$, which is impossible. A similar argument shows that  $v_4$ and $v_5$ cannot both be in $V_3$. Hence, either $v_4 \in V_2$ and $v_5 \in V_3$, or $v_4 \in V_3$ and $v_5 \in V_2$.
	  
	   Similarly, we can show that either $v_6 \in V_2$ and $v_7 \in V_3$, or $v_6 \in V_3$ and $v_7\in V_2$. Now considering a perfect matching in $T[V_1 \cup V_2]$ and a perfect matching in $T[V_1 \cup V_3]$, we deduce that $v_2 \in V_1$ and $v_3 \in V_1$. Further, considering a perfect matching in $T[V_1 \cup V_2]$ or a perfect matching in $T[V_1 \cup V_3]$, we deduce that both $v_2$ and $v_3$ must be matched to one of their children which implies that $v_1$ is not matched to any other vertex, a contradiction. Hence, $PC(T(4))\neq 3$, and so $PC(T(4))=0$.
\end{proof}
We end this subsection  with the following conjecture:
\begin{con} 
	If $T(h)$ is a perfect binary tree with height $h$, then
	$$
	PC(T(h))= \begin{cases}
		5 & if \  h=2 \\
		3 & if \  \text{h is odd}  \\
		0 &   otherwise.
	\end{cases}
	$$
\end{con}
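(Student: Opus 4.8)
The plan is to reduce to $h\ge 5$ and then argue by the parity of $h$. The small cases are already settled: $PC(T(1))=3$ and $PC(T(2))=5$ are recorded at the start of the subsection, and $PC(T(3))=3$, $PC(T(4))=0$ are the two preceding propositions. For $h\ge 3$ the preceding lemma gives $PC(T(h))\le 4$; moreover every $T(h)$ with $h\ge 2$ has a strong support vertex and hence no perfect matching, so it admits no $pc$-partition of order $2$, and no graph admits one of order $1$. Thus $PC(T(h))\in\{0,3,4\}$ for all $h\ge 3$, and it remains to prove, for $h\ge 5$: (i) $PC(T(h))=3$ when $h$ is odd, and (ii) $PC(T(h))\notin\{3,4\}$ when $h$ is even.

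For the lower bound in (i) I would generalise the partition of Figure 4. Let $V_1$ consist of all vertices at even depth from the root; since $h$ is odd the support level $h-1$ is even, so $V_1$ contains the root and all support vertices but no leaf. For each $v\in V_1$ colour its two children with $2$ and $3$, one each; this colours every odd-depth vertex, and we take $V_2,V_3$ to be the two colour classes. Matching each $v\in V_1$ to its colour-$2$ child yields a perfect matching of $T[V_1\cup V_2]$, because it is a bijection: the identity $\sum_{\ell\text{ odd}}2^{\ell}=2\sum_{\ell\text{ even}}2^{\ell}$, valid exactly when the leaf level $h$ is odd, gives $|V_1|=|V_2|$; and $V_1\cup V_2$ dominates since every colour-$3$ vertex has its parent in $V_1$. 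The same holds for $V_1\cup V_3$. Finally $G[V_1]$ is edgeless, so $V_1$ is not paired dominating, and each of $V_2,V_3$ fails to dominate the opposite-coloured sibling of one of its leaves; hence $\{V_1,V_2,V_3\}$ is a $pc$-partition and $PC(T(h))\ge 3$. With $PC(T(h))\le 4$, part (i) then needs only the exclusion of order $4$.

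The upper bounds in both (i) and (ii) should follow from a single matching-propagation argument, uniform in $h$, extending the case analysis of the $T(3)$ and $T(4)$ proofs. Assume $PC(T(h))\ge 3$, let $\pi$ be a $pc$-partition and $V_1$ its member containing the support vertices; by Corollary \ref{coro-t} every other member $V_i$ is a $pc$-partner of $V_1$, so $T[V_1\cup V_i]$ has a perfect matching $M_i$, and by Lemma \ref{lem-sup3} no leaf lies in $V_1$. The local rule driving the induction is: a leaf of $T[V_1\cup V_i]$ must be matched to its support parent, so each support vertex $s$ has at most one leaf child in $V_i$, and if it has one then $M_i$ matches $s$ downward, while if it has none then $s$ must be matched to its own parent. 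Propagating this rule one level at a time constrains, in each $M_i$, almost every internal vertex to be matched to a child, with the slack controlled by a single count per principal subtree. The aim is to show that these constraints, imposed simultaneously for all partners $V_i$, force both $v_2$ and $v_3$ to be matched to the root $v_1$ in some $M_i$ --- the very contradiction obtained for $h=3,4$ --- thereby excluding order $4$ for all $h\ge 5$ and, when $h$ is even, excluding order $3$ as well.

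The main obstacle is to prove that this root defect is unavoidable, not merely an artefact of the symmetric level colourings. The telltale sign is the companion identity $\sum_{\ell\text{ even}}2^{\ell}=2\sum_{\ell\text{ odd}}2^{\ell}+1$ for even $h$, which records a forced surplus of exactly one vertex---the root---once the support vertices are placed in $V_1$; this single unit of parity is what should collapse order $3$ for even $h$, while for odd $h$ the construction of the second paragraph absorbs the count exactly. Converting the level-by-level bookkeeping of the two propositions into a clean induction on $h$ (or a global parity invariant on the colour classes) that pins the colouring near the leaves tightly enough for the $+1$ defect to reach $v_1$, and that cleanly separates the odd case from the even case, is where the real difficulty lies.
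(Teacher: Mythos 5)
This statement is stated in the paper as a \emph{conjecture}, not a theorem: the authors themselves remark that the Figure~4 pattern extends to give a $pc$-partition of order $3$ for every odd $h$, that the $PC(T(4))\neq 3$ argument extends to even $h\geq 6$, and that the missing ingredient is precisely a proof that $PC(T(h))\neq 4$ for $h\geq 5$. Your proposal reproduces this outline rather than closing it. The parts you actually carry out are sound: the reduction to $h\geq 5$ via the earlier lemma and propositions is correct, and your level-parity construction (taking $V_1$ to be the even-depth vertices and splitting each sibling pair of odd-depth vertices between $V_2$ and $V_3$) is a complete and verifiable proof that $PC(T(h))\geq 3$ for odd $h$ --- the cardinality identity $|V_1|=|V_2|=|V_3|$ and the parent-to-colour-$2$-child bijection do give perfect matchings of $T[V_1\cup V_2]$ and $T[V_1\cup V_3]$, and $V_1$ is independent while each $V_i$ misses the opposite-coloured leaf siblings, so none of the three sets is itself paired dominating.

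The genuine gap is your third paragraph. For a partition of order $4$, each support vertex has two leaf children distributed among three partner sets $V_2,V_3,V_4$, so for each support vertex $s$ some partner $V_i$ contains no leaf child of $s$, and in the matching $M_i$ that vertex $s$ must be matched \emph{upward}; the rigid ``every internal vertex is matched to a child'' propagation that drives the $T(3)$ and $T(4)$ contradictions therefore breaks immediately one level above the leaves, and the direction of matching can differ from one $M_i$ to another and from one subtree to another. Your claim that the constraints ``force both $v_2$ and $v_3$ to be matched to the root in some $M_i$'' is exactly the assertion that needs proof, and neither the leaf-count bookkeeping of the $T(3)$ proposition (which used the specific partition $8=A_2+A_3+A_4$ with $A_i\geq 2$) nor your parity identity for even $h$ supplies it for general $h\geq 5$. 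You acknowledge this yourself in the final paragraph, so what you have is a research plan plus a correct lower bound for odd $h$, not a proof of the conjecture; the order-$4$ exclusion (and a precise, not merely asserted, extension of the $T(4)$ argument to even $h\geq 6$) remains open, just as in the paper.
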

Note that the pattern we used   to obtain a $pc$-partition of order $3$ for $T(3)$, can be extended and  applied to produce a $pc$-partition of order $3$ for other binary trees with odd height. Also, the arguments we presented to show  $PC(T(4)) \neq 3$, can be extended and applied to prove that $PC(T(h)) \neq 3$, where $h$ is even and $h\geq 6$. So, to prove the conjecture, we need to show that $PC(T(h))\neq 4$, for $h\geq 5$.

\section{Graphs with large paired coalition number}
This section is divided into two subsections. In the first subsection, we characterize all triangle-free graphs of order $n$  with paired coalition number $n$, and in the second subsection, we characterize all unicyclic graphs  of order $n$ with paired coalition number $n-2$.
	\subsection{Triangle-free graphs $G$ with  $PC(G)=n$}
	
\begin{lemma} \label{n-girth} 
	If $G$ is a graph of order $n$ with $PC(G)=n$, then $g(G)\leq 4$.
\end{lemma}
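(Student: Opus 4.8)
The plan is to first translate the hypothesis $PC(G)=n$ into a purely structural condition. Since $G$ has order $n$, the only vertex partition of $G$ into $n$ parts is the partition into singletons, so $PC(G)=n$ forces this singleton partition to be a $pc$-partition. A singleton $\{v\}$ is never a paired dominating set (a single vertex admits no perfect matching), so the only requirement is that each singleton $\{v\}$ have a $pc$-partner $\{u\}$; that is, $\{v,u\}$ must be a paired dominating set. As $G[\{v,u\}]$ has a perfect matching exactly when $uv\in E$, this says: for every vertex $v$ there is a neighbour $u$ with $N[v]\cup N[u]=V$. In other words, every vertex of $G$ lies on a dominating edge, and I will use this reformulation throughout. (Note that $g(G)$ presupposes that $G$ contains a cycle; if $G$ is acyclic then by Corollary \ref{tree-pc-n} it is a star and the claim is vacuous, so I assume $G$ has a cycle.)

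I would then argue by contradiction, assuming $g(G)\ge 5$. Choose a shortest cycle $C=v_0v_1\cdots v_{g-1}v_0$. Because $C$ is shortest it is chordless: any chord $v_iv_j$ would split $C$ into two cycles, the shorter of which has length strictly less than $g$, contradicting minimality. In particular, no two vertices of $C$ at cyclic distance at least $2$ are adjacent. Now fix $v_0$ and let $u$ be a neighbour of $v_0$ with $N[v_0]\cup N[u]=V$, which exists by the reformulation.

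The heart of the argument is to pin down $u$ and then exhibit an undominated vertex. Since $g\ge5$, we have $v_2\notin N[v_0]$ (otherwise $v_0v_1v_2$ is a triangle), so $v_2\in N[u]$, i.e. $u\in N(v_0)\cap N[v_2]$. I will rule out every candidate except $v_1$: the cycle vertices of $N[v_2]$ are $v_1,v_2,v_3$, but $v_2,v_3\notin N(v_0)$ (each such edge would be a chord at $v_0$), while any off-cycle vertex $w\in N(v_0)\cap N(v_2)$ would produce the $4$-cycle $v_0v_1v_2wv_0$, contradicting $g\ge5$. Hence $u=v_1$ and $\{v_0,v_1\}$ dominates $G$. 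To finish, consider $v_3$ (distinct from $v_0,v_1,v_2$ since $g\ge5$): $v_3\notin N[v_0]$, since $v_0v_3\in E$ would create the $4$-cycle $v_0v_1v_2v_3v_0$, and $v_3\notin N[v_1]$, since $v_1v_3\in E$ would create the triangle $v_1v_2v_3$. Thus $v_3$ is dominated by neither $v_0$ nor $v_1$, contradicting that $\{v_0,v_1\}$ is a dominating set. This contradiction gives $g(G)\le4$.

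The routine parts are the chordlessness of a minimum cycle and the short-cycle exclusions; the only places demanding care are the reformulation step and the case analysis locating $u=v_1$ — in particular, closing off the off-cycle common-neighbour case by producing an explicit $C_4$, and checking that the listed vertices $v_0,v_1,v_2,v_3$ are genuinely distinct at the borderline value $g=5$. I expect this to be the main (albeit mild) obstacle; everything else follows immediately from the chordless structure of a shortest cycle.
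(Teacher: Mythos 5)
Your proof is correct and follows essentially the same route as the paper's: both reduce $PC(G)=n$ to the statement that every vertex lies on a dominating edge (via the forced singleton partition) and then derive a contradiction from an induced shortest cycle of length at least $5$. The only difference is organizational --- the paper first rules out on-cycle partners (two adjacent vertices cannot dominate an induced cycle of length $\geq 5$) and then shows an off-cycle partner would have to dominate $V(C)\setminus N_C[v]$ and thereby create a short cycle, whereas you pin the partner down to $v_1$ and exhibit $v_3$ as undominated; both case analyses are sound.
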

\begin{proof}
	Suppose, to the contrary, that $g(G)\geq 5$. Let  $C$ be an induced cycle in $G$ with order $g(G)$,  and let $\pi$ be the $PC(G)$-partition. Consider an arbitrary set $\lbrace v\rbrace \in \pi$ such that $\lbrace v\rbrace \subset V(C)$. Since $\lvert V(C)\rvert \geq 5$, it follows that $\gamma_{pr} (C)> 3$. Thus, for each $\lbrace u\rbrace \in \pi$, if $\lbrace u\rbrace$ is a $pc$-partner of $\lbrace v\rbrace$, then $\lbrace u\rbrace \subset V(G) \setminus V(C)$, implying that $\lbrace u\rbrace$ must dominate all  vertices in $V(C) \setminus N_C [v]$, which creates cycles of order less than five, a contradiction.
\end{proof}	
The following theorem characterizes  graphs $G$ with $g(G)=4$ and $PC(G)=n$. Note that a complete bipartite graph has girth $4$, if and only if each partite set has cardinality at least $2$.
	\begin{theorem} \label{the-girth4}
		Let $G$ be a graph of order $n$ with $g(G)=4$. Then $PC(G)=n$ if and only if $G$ is a complete bipartite graph.
	\end{theorem}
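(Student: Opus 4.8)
The plan is to first translate the condition $PC(G)=n$ into a purely local statement. Since the only vertex partition of order $n$ is the singleton partition, $PC(G)=n$ holds if and only if $\pi=\{\{v_1\},\dots,\{v_n\}\}$ is a $pc$-partition. A singleton $\{v_i\}$ is never a paired dominating set, and $\{v_i\}$ and $\{v_j\}$ form a paired coalition exactly when $\{v_i,v_j\}$ is a paired dominating set, i.e. when $v_iv_j\in E(G)$ (so that $G[\{v_i,v_j\}]$ has a perfect matching) and $N[v_i]\cup N[v_j]=V(G)$. Hence $PC(G)=n$ is equivalent to: every vertex of $G$ is incident with a \emph{dominating edge} (an edge $uv$ with $N[u]\cup N[v]=V$). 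The forward implication ($G$ complete bipartite $\Rightarrow PC(G)=n$) is then immediate: since $g(G)=4$ forces both parts to have at least two vertices, $G$ is a complete multipartite graph of order $n\ge 2$, so the earlier observation that every complete multipartite graph of order $n\ge 2$ satisfies $PC(G)=n$ applies; alternatively one checks directly that any edge between the two parts is dominating.

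For the converse, assume $PC(G)=n$, so every vertex lies on a dominating edge, and recall that $g(G)=4$ makes $G$ triangle-free. The crux is the following observation about a single dominating edge $uv$ in a triangle-free graph: since $u$ and $v$ have no common neighbour (a common neighbour would create a triangle), $N(u)\cap N(v)=\emptyset$; and since $u\in N(v)$ and $v\in N(u)$, the domination condition $N[u]\cup N[v]=V$ upgrades to $V=N(u)\sqcup N(v)$. Moreover $N(u)$ and $N(v)$ are each independent (an edge inside $N(u)$ would form a triangle with $u$). Thus $V=N(u)\sqcup N(v)$ is a partition into two independent sets, which shows $G$ is bipartite with parts $X:=N(v)\ni u$ and $Y:=N(u)\ni v$, and every edge of $G$ runs between $X$ and $Y$.

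It then remains to promote bipartiteness to \emph{complete} bipartiteness, and I would do this by applying the same observation to every vertex. Fix the bipartition $(X,Y)$ above and take an arbitrary $x\in X$. Its dominating partner $w$ satisfies $w\in N(x)\subseteq Y$, and applying the displayed observation to the edge $xw$ gives $V=N(x)\sqcup N(w)$ with $N(x)\subseteq Y$ and $N(w)\subseteq X$; comparing with $V=X\sqcup Y$ forces $N(x)=Y$. Hence every $x\in X$ is adjacent to all of $Y$, and symmetrically every $y\in Y$ is adjacent to all of $X$, so $G\simeq K_{|X|,|Y|}$. I expect the only real obstacle to be isolating and cleanly stating the single-edge observation $V=N(u)\sqcup N(v)$; once that is in hand, both bipartiteness and completeness fall out by one and two applications respectively, with no case analysis and no use of the hypothesis beyond triangle-freeness and the existence of dominating edges.
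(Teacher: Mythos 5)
Your proof is correct, and it takes a genuinely different route from the paper's. The paper anchors its argument on a fixed $4$-cycle $C=(x,y,z,t)$, splits into cases according to whether $\{x\}$ has a $pc$-partner on $C$, and then works through four subcases on the emptiness of the sets $A=N(x)\setminus\{y,t\}$ and $B=N(y)\setminus\{x,z\}$ (and their analogues), pinning down the neighbourhood of each outside vertex one subcase at a time. You instead isolate a single structural fact: for a dominating edge $uv$ of a triangle-free graph, $V=N(u)\sqcup N(v)$ with both open neighbourhoods independent. One application gives bipartiteness with parts $X=N(v)$, $Y=N(u)$; a second application, to the dominating edge at an arbitrary $x\in X$, forces $N(x)=Y$, and completeness follows with no case analysis. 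Two remarks. First, your argument never uses the existence of a $4$-cycle, only triangle-freeness together with the (correct) reformulation that $PC(G)=n$ is equivalent to every vertex lying on a dominating edge; so it proves the stronger statement that a triangle-free graph of order $n\ge 2$ has $PC(G)=n$ if and only if it is complete bipartite, which is exactly the paper's subsequent corollary --- the paper obtains that corollary only by combining this theorem with Lemma \ref{n-girth} and Corollary \ref{tree-pc-n}, whereas your argument subsumes all three at once. Second, what the paper's more pedestrian approach buys is only locality: it reads the structure directly off the girth-$4$ cycle, which fits the way the surrounding results (e.g.\ Theorem \ref{girth5}) are organised by girth. Your version is shorter, cleaner, and more general; the one thing to make explicit when writing it up is the observation that $u\in N(v)$ and $v\in N(u)$ are what upgrade $N[u]\cup N[v]=V$ to $N(u)\cup N(v)=V$, which you have already flagged.
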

	\begin{proof} 
	If $G$ is a complete bipartite graph, then by Observation \ref{obs-multi}, $PC(G)=n$. Conversely, let $\pi$ be the $PC(G)$-partition, and let $C=(x,y,z,t)$ be an induced $4$-cycle in $G$.  We consider two cases.
		
		\textbf{Case 1.} The set $\lbrace x\rbrace$ has a $pc$-partner in $\lbrace \lbrace y\rbrace,\lbrace t\rbrace \rbrace$. By symmetry, we assume that $\lbrace x\rbrace$ and $\lbrace y\rbrace$ are $pc$-partners. Let $A=N(x) \setminus \lbrace y,t\rbrace$ and $B=N(y) \setminus \lbrace x,z\rbrace$. Observe that $A \cap B =\emptyset$.
		Consider the following subcases.
		
		\textbf{Subcase 1.1.} $A=\emptyset$ and $B=\emptyset$. It follows that  $G \simeq C_4$, as desired.
		
		\textbf{Subcase 1.2.} $A \neq \emptyset$ and $B=\emptyset$. Let $v \in A$. Observe that $v$ has no neighbor in $A \cup \lbrace t,y\rbrace$. Thus, $\lbrace v\rbrace$ must have a $pc$-partner in $\lbrace \lbrace x\rbrace,\lbrace z\rbrace \rbrace$, implying that $N(v)=\lbrace x,z\rbrace$. Hence, $[A,\lbrace x,z\rbrace]$ is full and $[A, \lbrace y,t\rbrace]$ is empty, and so $G$ is a complete bipartite graph with partite sets $A \cup \lbrace y,t\rbrace$ and $\lbrace x,z\rbrace$.
		
		\textbf{Subcase 1.3.} $A = \emptyset$ and $B \neq\emptyset$. Similar to the previous subcase, we can show  that $G$ is a complete bipartite graph with partite sets $B \cup \lbrace x,z\rbrace$ and $\lbrace y,t\rbrace$.
		
		\textbf{Subcase 1.4.} $A \neq \emptyset$ and $B \neq\emptyset$. Note
		that for each $v\in A$,  and for each $\lbrace u\rbrace \in \pi$, if $\lbrace u\rbrace$ is a $pc$-partner of  $\lbrace v\rbrace$, then $\lbrace u\rbrace \subset B \cup \lbrace x,z\rbrace$, implying that $N(v)=B \cup \lbrace x,z\rbrace$. Also, for each $v\in B$, and for each  $\lbrace u\rbrace \in \pi$, if $\lbrace u\rbrace$ is a $pc$-partner of  $\lbrace v\rbrace$, then $\lbrace u\rbrace \subset A \cup \lbrace y,t\rbrace$, implying that $N(u)=A \cup \lbrace y,t\rbrace$. Hence, $[A,B]$ is full, $[A,\lbrace x,z\rbrace]$ is full and $[B, \lbrace y,t\rbrace]$ is full, and so $G$ is a complete bipartite graph with partite sets $A \cup \lbrace y,t\rbrace$ and $B \cup \lbrace x,z\rbrace$.
		
		\textbf{Case 2.} The set $\lbrace x\rbrace$ has no $pc$-partner in  $\lbrace \lbrace y\rbrace,\lbrace t\rbrace \rbrace$. Let $\lbrace e\rbrace  \subset V(G) \setminus \lbrace y,z,t\rbrace$ be a $pc$-partner of $\lbrace x\rbrace$. Observe that the vertices $z$ and $e$ are adjacent. Now let $A=N(x) \setminus \lbrace y,t,e\rbrace$ and $B=N(e) \setminus \lbrace x,z\rbrace$. Observe that  $A \cap B =\emptyset$.     We consider the following subcases.
		
		\textbf{Subcase 2.1.} $A =\emptyset$ and $B =\emptyset$. It follows that $G \simeq K_{2,3}$, as desired.
		
		\textbf{Subcase 2.2.} $A \neq \emptyset$ and $B= \emptyset$. Note that for each $v\in A$, the set $\lbrace v\rbrace$ must have a $pc$-partner in $\lbrace \lbrace x\rbrace,\lbrace z \rbrace \rbrace$, implying that $N(v)= \lbrace x,z\rbrace$.  Hence, $[A,\lbrace x,z\rbrace]$ is full, and so $G$ is a complete bipartite graph with partite sets $A \cup \lbrace y,t,e\rbrace$ and $\lbrace x,z\rbrace$.
		
		\textbf{Subcase 2.3.} $A = \emptyset$ and $B \neq  \emptyset$.
		Note that for each $u\in B$, the set $\lbrace u\rbrace$ must have a $pc$-partner in $\lbrace \lbrace y\rbrace,\lbrace t\rbrace,\lbrace e\rbrace \rbrace$, implying that $N(u)= \lbrace y,t,e \rbrace$.  Hence, $[B,\lbrace y,t,e \rbrace]$ is full, and so $G$ is a complete bipartite graph with partite sets $B \cup \lbrace x,z\rbrace$ and $\lbrace y,t,e\rbrace$.
		
		\textbf{Subcase 2.4.} $A \neq \emptyset$ and $B \neq \emptyset$.
		Note
		that for each $v\in A$,  and for each $\lbrace u\rbrace \in \pi$, if $\lbrace u\rbrace$ is a $pc$-partner of $\lbrace v\rbrace$, then $\lbrace u\rbrace \subset B \cup \lbrace x,z\rbrace$, implying that $N(v)=B \cup \lbrace x,z\rbrace$. Also, for each $v\in B$, and for each $\lbrace u\rbrace \in \pi$, if $\lbrace u\rbrace$ is a $pc$-partner of $\lbrace v\rbrace$, then $\lbrace u\rbrace \subset A \cup \lbrace y,t,e\rbrace$, implying that $N(v)=A \cup \lbrace y,t,e\rbrace$. Hence, $[A,B]$ is full, $[A,\lbrace x,z\rbrace]$ is full and $[B, \lbrace y,t,e\rbrace]$ is full, and so $G$ is a complete bipartite graph with partite sets $A \cup \lbrace y,t,e \rbrace$ and $B \cup \lbrace x,z\rbrace$.
\end{proof}

Applying Corollary \ref{tree-pc-n}, Lemma \ref{n-girth} and Theorem \ref{the-girth4}, we have the following result.
\begin{corollary}
	Let $G$ be a triangle-free graph of order $n\geq 2$. Then $PC(G)=n$ if and only if $G$ is a complete bipartite graph.
\end{corollary}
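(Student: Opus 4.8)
The plan is to prove the final Corollary by combining the three cited results to handle the possible girths of a triangle-free graph $G$ with $PC(G)=n$. Since $G$ is triangle-free, it contains no $K_3$, so $g(G)\neq 3$; thus the girth is either undefined (meaning $G$ is a forest) or satisfies $g(G)\geq 4$. By Lemma \ref{n-girth}, whenever $PC(G)=n$ we must have $g(G)\leq 4$. Therefore the only possibilities are that $G$ is acyclic or that $g(G)=4$ exactly.

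First I would dispose of the acyclic case. If $G$ is a forest of order $n\geq 2$ with $PC(G)=n$, then in particular $G$ admits a $pc$-partition, so $G$ has no isolated vertex and hence $\delta(G)\geq 1$. If $G$ is disconnected, a quick check (or an appeal to the structure forced by Lemma \ref{l5} and Corollary \ref{tree-pc-n}) rules this out, so $G$ is a tree; then Corollary \ref{tree-pc-n} forces $T$ to be a star $K_{1,n-1}$. The star is exactly the complete bipartite graph $K_{1,n-1}$, so it falls under the desired conclusion. In the remaining case $g(G)=4$, I would apply Theorem \ref{the-girth4} directly: it states that for a graph $G$ of order $n$ with $g(G)=4$, $PC(G)=n$ if and only if $G$ is complete bipartite. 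This gives the forward direction in the girth-$4$ case immediately.

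For the converse direction I would argue that every complete bipartite graph $G$ of order $n\geq 2$ is triangle-free and satisfies $PC(G)=n$. Triangle-freeness of $K_{a,b}$ is immediate since it is bipartite. The equality $PC(G)=n$ follows from the earlier Observation on complete multipartite graphs (a complete bipartite graph is complete multipartite with two parts), which already asserts $PC(G)=n$ for all complete multipartite $G$ of order $n\geq 2$. Combining both directions over all admissible girths yields the stated equivalence.

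The main obstacle, and the step requiring the most care, is confirming that the acyclic subcase is genuinely covered by the stated results: I must check that $PC(G)=n$ with $G$ a forest forces connectivity (no isolated components and no multi-component forest can achieve $PC=n$ while admitting a valid $pc$-partition), so that Corollary \ref{tree-pc-n} applies and delivers the star, which is itself complete bipartite of the form $K_{1,n-1}$. Once that bookkeeping is done, the triangle-free constraint simply eliminates girth $3$, and the two remaining regimes are each settled by a single cited result, so the proof is essentially a case split on $g(G)\in\{\text{undefined},4\}$ with no heavy computation.
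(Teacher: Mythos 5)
Your proof is correct and follows essentially the same route as the paper, which derives the corollary by combining Corollary \ref{tree-pc-n}, Lemma \ref{n-girth} and Theorem \ref{the-girth4} in exactly this girth case split. You are in fact slightly more careful than the paper in explicitly ruling out the disconnected acyclic case (via Lemma \ref{l5}, since a disconnected graph has no full vertex), a point the paper leaves implicit.
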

\subsection{Unicyclic graphs $G$ with $PC(G)=n-2$}
 Throughout this subsection, we will refer to the following trivial remark frequently.
\begin{rem} \label{rem4} 
	Let $G$ be a graph of order $4$. If $G$  contains an induced subgraph of order $3$, whose edge set is empty, then $G$  has no perfect matching. Also, If $G$ contains at least one isolated vertices, then $G$ has no perfect matching.
\end{rem}
\begin{theorem} \label{girth6} 
	If $G$ is a  graph of order $n$ with $g(G)\geq 6$, then $PC(G) < n-2$.
\end{theorem}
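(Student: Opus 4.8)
The plan is to show that none of $n$, $n-1$, $n-2$ can equal $PC(G)$. Since any partition of $V(G)$ into $n-1$ blocks contains a block of size $2$ that cannot be a paired-coalition partner (as noted before Theorem~\ref{tree-n-2}), every graph satisfies $PC(G)\ne n-1$. Moreover Lemma~\ref{n-girth} gives $PC(G)=n\Rightarrow g(G)\le 4$, so $g(G)\ge 6$ already forces $PC(G)\ne n$. Hence it remains to prove $PC(G)\ne n-2$, and I would argue this by contradiction: suppose $\pi$ is a $PC(G)$-partition with $|\pi|=n-2$. A partition of $n$ vertices into $n-2$ blocks has exactly one of two shapes, namely two blocks of size $2$ together with $n-4$ singletons (Case 1), or one block of size $3$ together with $n-3$ singletons (Case 2). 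Throughout I may assume $G$ has no isolated vertex (otherwise no paired dominating set exists and $PC(G)=0$), and I use that $G$ contains a cycle, since its girth is defined; in particular $n\ge 6$.

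The engine of both cases is an observation about dominating edges. If $\{c,d\}$ is a paired dominating set of size $2$, then $cd\in E$ and $N[c]\cup N[d]=V$. When $g(G)\ge 6$, the vertices $c,d$ have no common neighbour (it would create a triangle), there are no edges inside $N(c)\setminus\{d\}$ nor inside $N(d)\setminus\{c\}$ (again triangles), and there is no edge between these two sets (such an edge would close a $4$-cycle $c,p,q,d$). Hence the only edges of $G$ are $cd$ and those joining $c$ and $d$ to their private neighbours, so $G$ is a double star and thus a tree, contradicting the presence of a cycle. Therefore \emph{no size-$2$ set is a paired dominating set}, and in particular no two singleton blocks of $\pi$ can be paired-coalition partners.

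For Case 1, every singleton block still needs a partner, yet its union with a size-$2$ block has odd order $3$ and so has no perfect matching, while its union with another singleton would be a dominating edge, excluded above; since singleton blocks exist, one of them has no partner, a contradiction. For Case 2, write $A=\{x,y,z\}$; as singletons cannot pair with each other, every singleton $v$ must satisfy that $A\cup\{v\}$ is a paired dominating set. A perfect matching of $G[\{x,y,z,v\}]$ pairs $v$ with one vertex of $A$ and the remaining two vertices of $A$ with each other, so $G[A]$ contains an edge and, by the girth hypothesis, is not a triangle. I would then run through the admissible edge sets of $G[A]$ (a single edge, or a path on the three vertices): in each configuration the matching requirement determines, for every singleton $v$, which vertices of $A$ it may be adjacent to, and the condition $g(G)\ge 6$ rules out the short cycles the resulting adjacencies would otherwise create ($3$-, $4$- and $5$-cycles through $A$ and the singletons). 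The outcome is that the singletons attach to $A$ only as leaves, so $G$ is again a tree, contradicting the existence of a cycle.

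The clean part is the dominating-edge lemma, which disposes of Case 1 at once. The main obstacle is the bookkeeping in Case 2: for each edge configuration of $G[A]$ permitted by the matching condition and the absence of triangles, one must check that the forced adjacency pattern of the singletons supports no cycle of length $\ge 6$, i.e.\ that every cycle it could produce is short and hence already forbidden by the girth. Once acyclicity is established in every configuration, the contradiction with $g(G)\ge 6$ completes the argument and yields $PC(G)\le n-3<n-2$.
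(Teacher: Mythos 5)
Your proposal is correct, and while its skeleton (contradiction; the two partition shapes $2+2+1+\dots+1$ and $3+1+\dots+1$; the dominating-edge argument disposing of Case 1 and of singleton--singleton partnerships in Case 2) coincides with the paper's, the core of Case 2 follows a genuinely different route. The paper fixes a shortest cycle $C$ and argues locally, splitting into four subcases according to $\lvert\{x,y,z\}\cap V(C)\rvert$ and using Observations~\ref{unio1} and~\ref{unio2} to either exhibit a cycle vertex undominated in the matching sense or to force enough chords to contradict $g(G)\ge 6$. You instead argue globally: the matching condition forces $G[A]$ to contain an edge but not a triangle, and then, for each admissible edge set of $G[A]$ (one edge, or a two-edge path), the matching and girth constraints pin down all adjacencies so completely that $G$ is forced to be a forest, contradicting the very existence of a cycle. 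Your route has two advantages: it actually proves the ``no dominating edge'' claim that the paper only asserts, and it avoids the cycle-intersection casework; its cost is that the configuration analysis is only sketched, and one detail of your summary needs adjusting: in the single-edge configuration $E(G[A])=\{xy\}$ every singleton is matched to $z$, but one singleton may additionally be adjacent to $x$ or to $y$ without creating a cycle of length at most $5$, so the singletons are not literally all leaves --- the correct (and still sufficient) conclusion is that $G$ is acyclic. With that configuration check written out, your argument is complete and yields $PC(G)\le n-3$ as claimed.
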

\begin{proof}
	By Lemma \ref{n-girth},  $PC(G) \neq n$, so it remains to show that $PC(G) \neq n-2$.
Suppose, to the contrary, that $PC(G)=n-2$. Let $\pi$ be a $PC(G)$-partition. We consider two cases.

\textbf{Case 1.} $\pi$ consists of two doubleton sets and $n-4$ singleton sets. Note that no singleton set can form a paired coalition with a doubleton set. Let $\lbrace v\rbrace$ and $\lbrace u\rbrace$ be two singleton sets in $\pi$ that are $pc$-partners. It follows that the edge $uv$ is a dominating edge in $G$. On the other hand, since $g(G)\geq 6$, $G$ has no dominating edge, a contradiction.

\textbf{Case 2.} $\pi$ consists of a  set of order $3$ and $n-3$ singleton sets.
Let $\lbrace x,y,z\rbrace \in \pi$, and let $C\subseteq G$ be a cycle in $G$ of order $g(G)$. Since $G$ has no dominating edge, no two singleton sets are $pc$-partners. Thus, each singleton set forms a paired coalition with $\lbrace x,y,z\rbrace$. Now we consider the following subcases.

\textbf{Subcase 2.1.} $\lbrace x,y,z\rbrace \cap V(C) =\emptyset$. Since $g(G)\geq 6$, it follows that there exists a vertex $v \in V(C)$ having no neighbor in $\lbrace x,y,z\rbrace$. Now by Remark \ref{rem4}, the sets $\lbrace v\rbrace$ and $\lbrace x,y,z\rbrace$ are not $pc$-partners, a contradiction.

\textbf{Subcase 2.2.} $\lvert \lbrace x,y,z\rbrace \cap V(C)\rvert =1$. By symmetry, we may assume that $z \in V(C)$. Since each of the sets $\lbrace x\rbrace$ and $\lbrace y\rbrace$ dominate at most one vertex from $V(C)$, there exists a vertex $v\in V(C)$ such that $N(v) \cap \lbrace x,y,z\rbrace =\emptyset$. Now, by Remark \ref{rem4},the set $\lbrace v\rbrace$ has no $pc$-partner, a contradiction.

\textbf{Subcase 2.3.} $\lvert \lbrace x,y,z\rbrace \cap V(C)\rvert =2$. By symmetry, we may assume that $x \notin V(C)$. If $x$ has no neighbor in $\lbrace y,z\rbrace$, then by Remark \ref{rem4}, $yz \in E(G)$, and so, considering an arbitrary vertex $v \in V(C) \setminus \lbrace y,z\rbrace$ and a perfect matching $M$ in $G[\lbrace x,y,z,v\rbrace]$, we have $yz \in M$, implying that $V(C)\setminus \lbrace y,z\rbrace \subseteq N(x)$, a contradiction. Hence, we may assume, by symmetry, that $xy \in E(G)$, implying that $N_C (x)=\lbrace y\rbrace$. Then, considering an arbitrary vertex $v \in V(C) \setminus \lbrace y,z\rbrace$ and a perfect matching $M$ in $G[\lbrace x,y,z,v\rbrace]$, we have $xy \in M$, implying that $V(C)\setminus \lbrace y\rbrace \subseteq N[z]$, which is again a contradiction.

\textbf{Subcase 2.4.} $\lbrace x,y,z\rbrace \subset V(C)$. Remark \ref{rem4} implies that $E(G[\lbrace x,y,z\rbrace]) \neq \emptyset$. Thus, by symmetry, we may assume that $xy \in E(G[\lbrace x,y,z\rbrace])$. Now if $z$ has a neighbor in $\lbrace x,y\rbrace$, then there exists a vertex $v \in V(C) \setminus \lbrace x,y,z\rbrace$ such that $v$ has no neighbor in $\lbrace x,y,z\rbrace$, a contradiction. Otherwise, considering an arbitrary vertex $v \in V(C) \setminus \lbrace x,y,z\rbrace$ and a perfect matching $M$ in $G[\lbrace x,y,z,v\rbrace]$, we have $xy \in M$, implying that $V(C)\setminus \lbrace x,y\rbrace \subseteq N[z]$, which is again a contradiction.
\end{proof}
We define the graph $\mathcal{B}$ and the families of graphs $\mathcal{B}_1$ and $\mathcal{B}_2$ as shown in Figure \ref{unicyclic1}.
 \begin{figure}[!htbp]
 	\centering
 	\begin{subfigure}{0.2\textwidth}
 		\begin{tikzpicture}[scale=0.12, transform shape]
 			
 \node [draw, shape=circle,fill=black,scale=2] (v1) at  (0,0){};
\node [draw, shape=circle,fill=black,scale=2] (v2) at  (10,0) {};
\node [draw, shape=circle,fill=black,scale=2] (v3) at  (-4,9) {};
\node [draw, shape=circle,fill=black,scale=2] (v4) at  (14,9) {};
\node [draw, shape=circle,fill=black,scale=2] (v5) at  (5,13) {};

 			\draw(v1)--(v2);
 			\draw(v1)--(v3);
 			\draw(v2)--(v4);
 			\draw(v3)--(v5);
 			
 			\draw(v4)--(v5);		
 		\end{tikzpicture}
 		\caption{The graph $\mathcal{B}$}
 		
 	\end{subfigure}
 	\begin{subfigure}{0.3\textwidth}
 		\begin{tikzpicture}[scale=.12, transform shape]

\node [draw, shape=circle,fill=black,scale=2] (v1) at  (0,0){};
\node [draw, shape=circle,fill=black,scale=2] (v2) at  (10,0) {};
\node [draw, shape=circle,fill=black,scale=2] (v3) at  (-4,9) {};
\node [draw, shape=circle,fill=black,scale=2] (v4) at  (14,9) {};
\node [draw, shape=circle,fill=black,scale=2] (v5) at  (5,13) {};

\node [draw, shape=circle,fill=black,scale=2] (v6) at  (-6,15) {};
\node [draw, shape=circle,fill=black,scale=2] (v7) at  (-12,15) {};
\node [scale=6] at (-9,15) {\large $\cdots$};

\draw(v1)--(v2);
\draw(v1)--(v3);
\draw(v2)--(v4);
\draw(v3)--(v5);
\draw(v4)--(v5);

\draw(v3)--(v6);
\draw(v3)--(v7);		 				
 		\end{tikzpicture}
 		\caption{The family $\mathcal{B}_1$}
 		 	\end{subfigure}
 	\begin{subfigure}{0.3\textwidth}
 	\begin{tikzpicture}[scale=.12, transform shape]
\node [draw, shape=circle,fill=black,scale=2] (v1) at  (0,0){};
\node [draw, shape=circle,fill=black,scale=2] (v2) at  (10,0) {};
\node [draw, shape=circle,fill=black,scale=2] (v3) at  (-4,9) {};
\node [draw, shape=circle,fill=black,scale=2] (v4) at  (14,9) {};
\node [draw, shape=circle,fill=black,scale=2] (v5) at  (5,13) {};

\node [draw, shape=circle,fill=black,scale=2] (v6) at  (-6,15) {};
\node [draw, shape=circle,fill=black,scale=2] (v7) at  (-12,15) {};
\node [scale=6] at (-9,15) {\large $\cdots$};

\node [draw, shape=circle,fill=black,scale=2] (v8) at  (16,15) {};
\node [draw, shape=circle,fill=black,scale=2] (v9) at  (22,15) {};
\node [scale=6] at (19,15) {\large $\cdots$};

\draw(v1)--(v2);
\draw(v1)--(v3);
\draw(v2)--(v4);
\draw(v3)--(v5);
\draw(v4)--(v5);

\draw(v3)--(v6);
\draw(v3)--(v7);		 	 		

\draw(v4)--(v8);
\draw(v4)--(v9);		 	 		

 		\end{tikzpicture}
 		\caption{The family $\mathcal{B}_2$}
 	\end{subfigure}
	\caption{Graphs $G$ with $g(G)=5$ and $PC(G)=n-2$}\label{unicyclic1}
\end{figure}
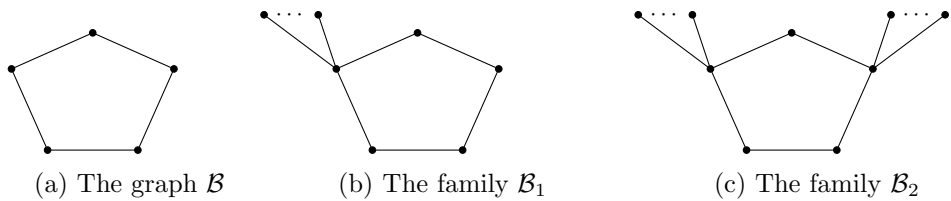

\begin{theorem} \label{girth5}
	Let $G$ be a  graph of order $n$ with $g(G)=5$. Then $PC(G)=n-2$ if and only if $G \in \mathcal{B} \cup \mathcal{B}_1 \cup \mathcal{B}_2$.
\end{theorem}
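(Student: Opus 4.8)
The plan is to prove both implications, using throughout that $g(G)=5$ forces $PC(G)\le n-2$: Lemma \ref{n-girth} gives $PC(G)\neq n$, and $PC(G)\neq n-1$ holds for every graph, so producing any $pc$-partition of order $n-2$ already certifies $PC(G)=n-2$. For the easy direction I would exhibit such a partition for each family. Writing the cycle as a $C_5$, in $\mathcal{B}_1$ (leaves at a single cycle-vertex $z$) I would take the hub $A=\{z,q,r\}$, where $q,r$ are the two cycle-vertices \emph{non}-adjacent to $z$ (so $qr\in E$); every leaf at $z$ matches $z$ while $qr$ matches, so $A\cup\{v\}$ has a perfect matching for each remaining vertex $v$. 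In $\mathcal{B}_2$ (leaves at two vertices $s,t$ at distance two with common neighbour $m$) I would take $A=\{s,m,t\}$; a leaf at $s$ matches $s$ and $mt$ matches, symmetrically for $t$. In every case $A$ dominates $G$, hence $A\cup\{v\}$ is a paired dominating set for each of the $n-3$ (or $n-4$) singletons $\{v\}$, which together with $A$ gives a $pc$-partition of order $n-2$. The graph $\mathcal{B}=C_5$ is the degenerate case with no leaves.

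For the converse I would let $\pi$ be a $PC(G)$-partition and begin with a defect count: since $\sum_i(|V_i|-1)=n-(n-2)=2$, the partition consists either of two doubletons and $n-4$ singletons (Case 1) or of one tripleton $A=\{x,y,z\}$ and $n-3$ singletons (Case 2). I would first eliminate Case 1. A doubleton cannot be a $pc$-partner of a singleton (their union has odd order), so each singleton must pair with another singleton, forcing some edge $uv$ to be a dominating edge of $G$. But an induced $C_5$ admits no dominating edge: a vertex off the cycle has at most one cycle-neighbour (two would create a triangle or a $4$-cycle), and a vertex on the cycle dominates exactly three cycle-vertices, so two vertices dominate at most four of the five. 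This contradiction rules out Case 1.

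In Case 2 the same parity argument forces every singleton $\{v_i\}$ to be a $pc$-partner of $A$, so $G[\{x,y,z,v_i\}]$ has a perfect matching for all $i$. By Observation \ref{unio2} each $v_i$ has a neighbour in $A$, whence $A$ dominates $G$; by Observation \ref{unio1} $G[A]$ is not edgeless; and triangle-freeness forbids a triangle, so $G[A]$ has exactly one or two edges. If $G[A]=\{xy\}$ with $z$ isolated in $A$, the matching forces $v_i\sim z$ for every $i$, the $v_i$ are independent (no triangle at $z$), and the unique cycle must have the form $z-v_a-x-y-v_b-z$; avoiding triangles and $4$-cycles makes $x,y$ each adjacent to exactly one $v_i$, so every other $v_i$ is a leaf at $z$ and $G\in\mathcal{B}\cup\mathcal{B}_1$. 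If $G[A]$ is the path $x-y-z$, the matching forces each $v_i$ to be adjacent to exactly one of $x,z$ (both would give a $4$-cycle) and $y$ to have no $v_i$-neighbour; a cross edge between an $x$-neighbour and a $z$-neighbour produces the $C_5$, and here the single-cycle hypothesis limits this to exactly one such edge, after which the remaining $v_i$ are leaves at $x$ or at $z$, two vertices at distance two, giving $G\in\mathcal{B}\cup\mathcal{B}_1\cup\mathcal{B}_2$.

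The main obstacle is precisely this structural analysis of Case 2: one must merge the perfect-matching requirement for $A\cup\{v_i\}$ with the girth-$5$ prohibition of triangles and $4$-cycles to force every off-cycle $v_i$ to be a pendant, and must crucially invoke that $G$ has a unique cycle to prevent several independent $5$-cycles (which would otherwise also admit an order-$(n-2)$ $pc$-partition). Showing that at most two vertices can carry leaves and that they must sit at distance two — equivalently, that a single hub $\{x,y,z\}$ cannot simultaneously serve three support vertices, or two adjacent ones, without creating a forbidden triangle — is the delicate bookkeeping at the heart of the argument.
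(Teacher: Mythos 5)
Your argument follows the paper's proof in all essentials: the same defect count splitting $\pi$ into the two\mbox{-}doubleton case and the tripleton case, the same elimination of the former via the nonexistence of a dominating edge in a graph of girth $5$, and the same reduction of the latter to the requirement that every singleton be a $pc$-partner of the tripleton $A$, followed by a structural analysis forced by Observations \ref{unio1} and \ref{unio2}. Organizing the last step by the number of edges of $G[A]$ (one edge versus an induced $P_3$) is a mild streamlining of the paper's Subcases 2.1--2.4, which instead sort by $\lvert \lbrace x,y,z\rbrace \cap V(C)\rvert$; the content is the same. One wording slip: in Case 2 it is not ``the same parity argument'' that forces each singleton to partner with $A$ (two singletons have a union of even order), but the same dominating-edge argument you used in Case 1.

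The substantive point is where you invoke ``the single-cycle hypothesis.'' You are right that it is needed, but the theorem as stated does not assume $G$ is unicyclic, and without that assumption the statement is false. Take the path $a$--$b$--$c$, join $u_1,u_2$ to $a$ and $w_1,w_2$ to $c$, and add the two edges $u_1w_1$ and $u_2w_2$: this graph has order $7$ and girth $5$, the partition $\lbrace \lbrace a,b,c\rbrace ,\lbrace u_1\rbrace ,\lbrace u_2\rbrace ,\lbrace w_1\rbrace ,\lbrace w_2\rbrace \rbrace$ is a $pc$-partition of order $n-2$ (each $G[\lbrace a,b,c,v\rbrace]$ dominates and has a perfect matching, and Lemma \ref{n-girth} rules out $PC(G)=n$), yet the graph has two independent cycles and so lies outside $\mathcal{B}\cup\mathcal{B}_1\cup\mathcal{B}_2$. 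The paper's own proof has the same hole at the same spot: in Subcase 2.4 it shows that the cycle vertices other than the two endpoints of the path $G[A]$ have no neighbours outside $C$, but never excludes an edge between an outside neighbour of one endpoint and an outside neighbour of the other, which creates a second $5$-cycle without violating girth, domination, or any matching condition. Since the theorem is only applied to unicyclic graphs in the closing corollary of the section, the intended hypothesis is evidently unicyclicity, and under that hypothesis your argument (and the paper's) is complete; your proposal deserves credit for making the needed assumption explicit rather than passing over it.
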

\begin{figure}[!htbp]
	\centering
	\begin{subfigure}{0.2\textwidth}
		\begin{tikzpicture}[scale=.12, transform shape]
			
			\node [draw, shape=circle,fill=black,scale=2] (v1) at  (0,0){};
			\node [draw, shape=circle,fill=black,scale=2] (v2) at  (10,0) {};
			\node [draw, shape=circle,fill=black,scale=2] (v3) at  (-4,9) {};
			\node [draw, shape=circle,fill=black,scale=2] (v4) at  (14,9) {};
			\node [draw, shape=circle,fill=black,scale=2] (v5) at  (5,13) {};
			
			\node [scale=6] at (-4,10.5) {$x$};
			\node [scale=6] at (5,14.5) {$y$};
			\node [scale=6] at (14,10.5) {$z$};
			
			\draw(v1)--(v2);
			\draw(v1)--(v3);
			\draw(v2)--(v4);
			\draw(v3)--(v5);
			
			\draw(v4)--(v5);		
		\end{tikzpicture}
	\end{subfigure}
	\begin{subfigure}{0.3\textwidth}
		\begin{tikzpicture}[scale=.12, transform shape]
			\node [draw, shape=circle,fill=black,scale=2] (v1) at  (0,0){};
			\node [draw, shape=circle,fill=black,scale=2] (v2) at  (10,0) {};
			\node [draw, shape=circle,fill=black,scale=2] (v3) at  (-4,9) {};
			\node [draw, shape=circle,fill=black,scale=2] (v4) at  (14,9) {};
			\node [draw, shape=circle,fill=black,scale=2] (v5) at  (5,13) {};

			\node [draw, shape=circle,fill=black,scale=2] (v6) at  (-6,15) {};
			\node [draw, shape=circle,fill=black,scale=2] (v7) at  (-12,15) {};
			\node [scale=6] at (-9,15) {\large $\cdots$};
			\node [scale=6] at (-3.5,10.5) {$x$};
			\node [scale=6] at (5,14.5) {$y$};
			\node [scale=6] at (14,10.5) {$z$};
			
			\draw(v1)--(v2);
			\draw(v1)--(v3);
			\draw(v2)--(v4);
			\draw(v3)--(v5);
			\draw(v4)--(v5);
			
			\draw(v3)--(v6);
			\draw(v3)--(v7);		 				
		\end{tikzpicture}
	\end{subfigure}
	\begin{subfigure}{0.3\textwidth}
		\begin{tikzpicture}[scale=.12, transform shape]
			\node [draw, shape=circle,fill=black,scale=2] (v1) at  (0,0){};
			\node [draw, shape=circle,fill=black,scale=2] (v2) at  (10,0) {};
			\node [draw, shape=circle,fill=black,scale=2] (v3) at  (-4,9) {};
			\node [draw, shape=circle,fill=black,scale=2] (v4) at  (14,9) {};
			\node [draw, shape=circle,fill=black,scale=2] (v5) at  (5,13) {};
			
			\node [draw, shape=circle,fill=black,scale=2] (v6) at  (-6,15) {};
			\node [draw, shape=circle,fill=black,scale=2] (v7) at  (-12,15) {};
			\node [scale=6] at (-9,15) {\large $\cdots$};

			\node [draw, shape=circle,fill=black,scale=2] (v8) at  (16,15) {};
			\node [draw, shape=circle,fill=black,scale=2] (v9) at  (22,15) {};
			\node [scale=6] at (19,15) {\large $\cdots$};
			
			\node [scale=6] at (-3.5,10.5) {$x$};
			\node [scale=6] at (5,14.5) {$y$};
			\node [scale=6] at (13.5,10.5) {$z$};
			\draw(v1)--(v2);
			\draw(v1)--(v3);
			\draw(v2)--(v4);
			\draw(v3)--(v5);
			\draw(v4)--(v5);
			
			\draw(v3)--(v6);
			\draw(v3)--(v7);		 	 		
			
			\draw(v4)--(v8);
			\draw(v4)--(v9);		 	 		
			
		\end{tikzpicture}
	\end{subfigure}
	\caption{$pc$-partitions of order $n-2$}\label{unicyclic2}
\end{figure}
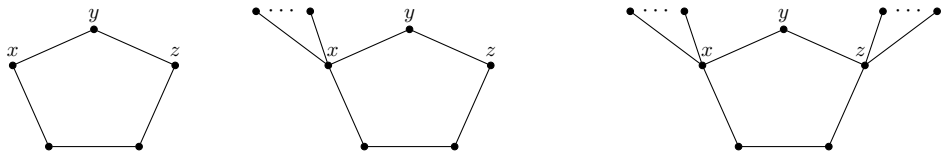
\begin{proof}
	 In Figure \ref{unicyclic2}, a $pc$-partition $\pi$ of order $n-2$ is illustrated for  graphs $G \in \mathcal{B} \cup \mathcal{B}_1 \cup \mathcal{B}_2$, where $\pi$ consists of the set $\lbrace x,y,z\rbrace$ and $n-3$ singleton sets. Conversely,
let $PC(G)=n-2$, and let $\pi$ be a $PC(G)$-partition. We consider two cases.

\textbf{Case 1.} $\pi$ consists of two doubleton sets and $n-4$ singleton sets. Note that no singleton set can form a paired coalition with a doubleton set. Let $\lbrace v\rbrace$ and $\lbrace u\rbrace$ be two singleton sets in $\pi$ that are $pc$-partners. It follows that the edge $uv$ is a dominating edge in $G$. On the other hand, since $g(G)= 5$, $G$ has no dominating edge, a contradiction.

\textbf{Case 2.} $\pi$ consists of a  set of order $3$ and $n-3$ singleton sets.
Let $\lbrace x,y,z\rbrace \in \pi$, and let $C\subseteq G$ be a $5$-cycle in $G$. Since $G$ has no dominating edge, no two singleton sets are $pc$-partners. Thus, each singleton set forms a paired coalition with $\lbrace x,y,z\rbrace$. Now we consider the following subcases.

\textbf{Subcase 2.1.} $\lbrace x,y,z\rbrace \cap V(C) =\emptyset$. Since each vertex in $\lbrace x,y,z\rbrace$ has at most one neighbor in $V(C)$, we can find a vertex $v \in V(C)$ having no neighbor in $\lbrace x,y,z\rbrace$. Now  the sets $\lbrace v\rbrace$ and $\lbrace x,y,z\rbrace$ are not $pc$-partners, so this subcase never occurs. 

\textbf{Subcase 2.2.} $\lvert \lbrace x,y,z\rbrace \cap V(C)\rvert =1$. By symmetry, we may assume that $z \in V(C)$. If $xy \in E(G)$, then the set $\lbrace x,y\rbrace$ does not dominate all vertices of $V(C) \setminus N_C [z]$, for otherwise, a $3$-cycle or a $4$-cycle would be created. Therefore, if $xy \in E(G)$, then  there exists a vertex $v \in V(C)$ having no neighbor in $\lbrace x,y,z\rbrace$, which is impossible. Hence, we may assume that $xy \notin E(G)$. Now Remark \ref{rem4} implies that $z$ has a neighbor in $\lbrace x,y\rbrace$. By symmetry, we may assume that $xz \in E(G)$, implying that $x$ has no neighbor in $V(C) \setminus N_C [z]$. Note also that the set $\lbrace y\rbrace$ does not dominate $V(C) \setminus N_C [z]$. Therefore,  we can again find a vertex $v \in V(C)$ having no neighbor in $\lbrace x,y,z\rbrace$. Hence, this subcase never occurs.

\textbf{Subcase 2.3.} $\lvert \lbrace x,y,z\rbrace \cap V(C)\rvert =2$. By symmetry, we may assume that $x \notin V(C)$. If $x$ has no neighbor in $\lbrace y,z\rbrace$, then $yz \in E(G)$. Now considering an arbitrary vertex $v \in V(C) \setminus \lbrace y,z\rbrace$ and a perfect matching $M$ in $G[\lbrace x,y,z,v\rbrace]$, we have $yz \in M$, implying that $V(C) \setminus \lbrace y,z\rbrace \subseteq N(x)$, which is impossible. Hence, we may assume, by symmetry, that $xy \in E(G)$, implying that $N_C (x)=\lbrace y\rbrace$. Then, considering an arbitrary vertex $v \in V(C) \setminus \lbrace y,z\rbrace$ and a perfect matching $M$ in $G[\lbrace x,y,z,v\rbrace]$, we have $xy \in M$, implying that $V(C)\setminus \lbrace y\rbrace \subseteq N[z]$, which is again impossible. Hence, this subcase never occurs.

\textbf{Subcase 2.4.} $\lbrace x,y,z\rbrace \subset V(C)$. Note that $E(G[\lbrace x,y,z\rbrace]) \neq \emptyset$. By symmetry, we may assume that $xy \in E(G[\lbrace x,y,z\rbrace])$. If $z$ has no neighbor in $\lbrace x,y\rbrace$, then Remark \ref{rem4} implies that the vertices in $V(C) \setminus \lbrace z\rbrace$ have no neighbor in $V(G) \setminus V(C)$, and so $G \in \mathcal{B} \cup \mathcal{B}_1$.  Otherwise, by symmetry, we may assume that $xz \in E(G)$. Now, by Remark \ref{rem4},  the vertices in $V(C) \setminus \lbrace z,y\rbrace$ have no neighbor in $V(G) \setminus V(C)$, implying that $G \in \mathcal{B} \cup \mathcal{B}_1 \cup \mathcal{B}_2$. This completes the proof.

\end{proof}

We define  the families of graphs $\mathcal{D}_1$ and $\mathcal{D}_2$ as shown in Figure \ref{unicyclic3}.
 \begin{figure}[!htbp]
	\centering

	\begin{subfigure}{0.18\textwidth}
		\begin{tikzpicture}[scale=.15, transform shape]
\node [draw, shape=circle,fill=black,scale=2] (v1) at  (0,0) {};
\node [draw, shape=circle,fill=black,scale=2] (v2) at  (0,10) {};
\node [draw, shape=circle,fill=black,scale=2] (v3) at  (10,0) {};
\node [draw, shape=circle,fill=black,scale=2] (v4) at  (10,10) {};
\node [draw, shape=circle,fill=black,scale=2] (v5) at  (14,14) {};
\node [draw, shape=circle,fill=black,scale=2] (v6) at  (20,14) {};
\node [scale=6] at (17,14) {\large $\cdots$};
\draw(v1)--(v2);
\draw(v1)--(v3);
\draw(v2)--(v4);
\draw(v3)--(v4);

\draw(v4)--(v5);
\draw(v4)--(v6);

		\end{tikzpicture}
		\caption{The family $\mathcal{D}_1$}
	\end{subfigure}
	\begin{subfigure}{0.3\textwidth}
		\begin{tikzpicture}[scale=.15, transform shape]
\node [draw, shape=circle,fill=black,scale=2] (v1) at  (0,0) {};
\node [draw, shape=circle,fill=black,scale=2] (v2) at  (0,10) {};
\node [draw, shape=circle,fill=black,scale=2] (v3) at  (10,0) {};
\node [draw, shape=circle,fill=black,scale=2] (v4) at  (10,10) {};
\node [draw, shape=circle,fill=black,scale=2] (v5) at  (14,14) {};
\node [draw, shape=circle,fill=black,scale=2] (v6) at  (20,14) {};
\node [scale=6] at (17,14) {\large $\cdots$};
\node [draw, shape=circle,fill=black,scale=2] (v7) at  (-4,4) {};
\node [draw, shape=circle,fill=black,scale=2] (v8) at  (-10,4) {};
\node [scale=6] at (-7,4) {\large $\cdots$};
\draw(v1)--(v2);
\draw(v1)--(v3);
\draw(v2)--(v4);
\draw(v3)--(v4);

\draw(v4)--(v5);
\draw(v4)--(v6);			

\draw(v1)--(v7);
\draw(v1)--(v8);			
		\end{tikzpicture}
		\caption{The family $\mathcal{D}_2$}
	\end{subfigure}
\caption{Unicyclic graphs $G$ with $g(G)=4$ and $PC(G)=n-2$}\label{unicyclic3}
\end{figure}
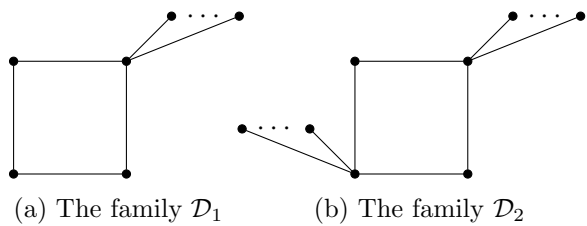

		\begin{lemma} \label{girth4}
			Let $G$ be a unicyclic  graph of order $n$ with $g(G)=4$. Then $PC(G)=n-2$ if and only if $G \in \mathcal{D}_1 \cup \mathcal{D}_2$.
		\end{lemma}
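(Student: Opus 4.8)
The plan is to prove both directions by combining explicit partitions (forward) with a cycle-intersection analysis (converse) that parallels the proof of Theorem \ref{girth5}, while accounting for two features special to girth $4$: dominating edges now exist, and the unique $4$-cycle can be dominated by just two of its vertices. Throughout let $C=(c_1,c_2,c_3,c_4)$ be the unique $4$-cycle of $G$.

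For the forward direction, I would observe that each graph in $\mathcal{D}_1\cup\mathcal{D}_2$ has its leaves attached only to $c_3$ (for $\mathcal{D}_1$) or to the antipodal pair $c_1,c_3$ (for $\mathcal{D}_2$). Taking $\pi$ to consist of the triple $\{c_1,c_2,c_3\}$ together with $c_4$ and every leaf as a singleton, I would verify that $\{c_1,c_2,c_3\}$ forms a paired coalition with each singleton: unioning with $c_4$ recovers $C$, which has a perfect matching, while unioning with a leaf $\ell$ at $c_1$ or $c_3$ yields the path $\ell-c_1-c_2-c_3$ (resp. $c_1-c_2-c_3-\ell$), again with a perfect matching, and in every case the union dominates $G$. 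Since $\{c_1,c_2,c_3\}$ has odd order it is not itself paired dominating, so $\pi$ is a $pc$-partition of order $n-2$. As no support vertex of a graph in $\mathcal{D}_1\cup\mathcal{D}_2$ is a full vertex, Lemma \ref{l5} gives $PC(G)\neq n$, and $PC(G)\neq n-1$ for every graph (as noted before Theorem \ref{tree-n-2}); hence $PC(G)=n-2$.

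For the converse, let $\pi$ be a $PC(G)$-partition. A unicyclic graph with $\delta(G)\ge 2$ is a cycle, and the only girth-$4$ cycle is $C_4$, for which $PC(C_4)=4\neq n-2$ by Theorem \ref{pc-cycle}; hence $\delta(G)=1$, and by Corollary \ref{pcg-1} the graph $PCG(G,\pi)$ is the star $K_{k-1,1}$. Let $Z$ be its center, which by Corollary \ref{coro-all} contains all support vertices and lies in every $pc$-pair. A partition into $n-2$ parts is either two doubletons with $n-4$ singletons, or one triple with $n-3$ singletons. The first type I would exclude using Corollary \ref{coro-all}: a singleton and a doubleton have odd union and so cannot be partners, hence whether $Z$ is a doubleton or a singleton it cannot pair with every other set, a contradiction. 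This replaces the \emph{no dominating edge} argument used for girth $5$, which is unavailable here. Thus $\pi$ consists of a triple $T$ and singletons, and — apart from a small degenerate case in which $Z$ is the unique support vertex, which occurs only for $C_4$ with a single pendant and is absorbed into $\mathcal{D}_1$ directly — we have $Z=T$, so $T$ pairs with every singleton.

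The core is then a case analysis on $t=|T\cap V(C)|$, using that in a unicyclic graph each vertex off $C$ has at most one neighbor on $C$, together with Observations \ref{unio1} and \ref{unio2}. If $t\le 1$, the vertices of $T$ dominate at most three vertices of $C$, or else the matching conditions at the two cycle-neighbors and at the antipode of the single cycle vertex force edges creating a second cycle; either way some cycle vertex yields a $4$-set with no perfect matching, a contradiction. If $t=2$ with the two cycle vertices adjacent, the off-cycle vertex of $T$ would need a neighbor in each of the two antipodal pairs of $C$, which is impossible by unicyclicity. In the two surviving configurations — $t=2$ with antipodal cycle vertices, and $t=3$ — the matching condition forces that no vertex of $C$ carries a subtree deeper than a single leaf, that the middle vertex of the induced $P_3\subseteq C$ carries no leaf, and hence that leaves occur only at one cycle vertex or at an antipodal pair, giving exactly $\mathcal{D}_1$ or $\mathcal{D}_2$. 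I expect this final step to be the main obstacle: one must simultaneously control the perfect-matching requirement and domination to confine all pendant structure to leaves on the admissible cycle vertices, while also cleanly disposing of the two girth-$4$-specific complications, namely excluding the two-doubleton partition in the presence of dominating edges and absorbing the degenerate singleton-center partition.
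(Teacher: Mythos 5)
Your proposal follows essentially the same route as the paper: an explicit partition built around a $P_3$ on the cycle for the forward direction, and for the converse an exclusion of the two-doubleton partition shape followed by a case analysis on $\lvert T\cap V(C)\rvert$ driven by unicyclicity, Observations \ref{unio1} and \ref{unio2}, and the perfect-matching constraint. The only real deviations are organizational and sound: you rule out the two-doubleton shape by parity via Corollary \ref{coro-all} instead of the paper's dominating-edge argument, and you package the paper's ``two singletons are $pc$-partners'' subcase (which the paper resolves to a member of $\mathcal{D}_1$) as your singleton-center degenerate case.
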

	 \begin{figure}[!htbp]
		\centering
		
		\begin{subfigure}{0.18\textwidth}
			\begin{tikzpicture}[scale=.15, transform shape]
\node [draw, shape=circle,fill=black,scale=2] (v1) at  (0,0) {};
\node [draw, shape=circle,fill=black,scale=2] (v2) at  (0,10) {};
\node [draw, shape=circle,fill=black,scale=2] (v3) at  (10,0) {};
\node [draw, shape=circle,fill=black,scale=2] (v4) at  (10,10) {};
\node [draw, shape=circle,fill=black,scale=2] (v5) at  (14,14) {};
\node [draw, shape=circle,fill=black,scale=2] (v6) at  (20,14) {};
\node [scale=6] at (17,14) {\large $\cdots$};
				
				\node [scale=6] at (0,-1.5) {$x$};
				\node [scale=6] at (0,11.5) {$y$};
				\node [scale=6] at (10,11.5) {$z$};
				
				\draw(v1)--(v2);
				\draw(v1)--(v3);
				\draw(v2)--(v4);
				\draw(v3)--(v4);
				
				\draw(v4)--(v5);
				\draw(v4)--(v6);
				
			\end{tikzpicture}

		\end{subfigure}
		\begin{subfigure}{0.3\textwidth}
			\begin{tikzpicture}[scale=.15, transform shape]
\node [draw, shape=circle,fill=black,scale=2] (v1) at  (0,0) {};
\node [draw, shape=circle,fill=black,scale=2] (v2) at  (0,10) {};
\node [draw, shape=circle,fill=black,scale=2] (v3) at  (10,0) {};
\node [draw, shape=circle,fill=black,scale=2] (v4) at  (10,10) {};
\node [draw, shape=circle,fill=black,scale=2] (v5) at  (14,14) {};
\node [draw, shape=circle,fill=black,scale=2] (v6) at  (20,14) {};
\node [scale=6] at (17,14) {\large $\cdots$};
\node [draw, shape=circle,fill=black,scale=2] (v7) at  (-4,4) {};
\node [draw, shape=circle,fill=black,scale=2] (v8) at  (-10,4) {};
\node [scale=6] at (-7,4) {\large $\cdots$};
				
				\node [scale=6] at (0,-1.5) {$x$};
				\node [scale=6] at (0,11.5) {$y$};
				\node [scale=6] at (10,11.5) {$z$};
				\draw(v1)--(v2);
				\draw(v1)--(v3);
				\draw(v2)--(v4);
				\draw(v3)--(v4);
				
				\draw(v4)--(v5);
				\draw(v4)--(v6);			
				
				\draw(v1)--(v7);
				\draw(v1)--(v8);			
			\end{tikzpicture}

		\end{subfigure}
	\caption{$pc$-partitions of order $n-2$}\label{unicyclic4}	
\end{figure}
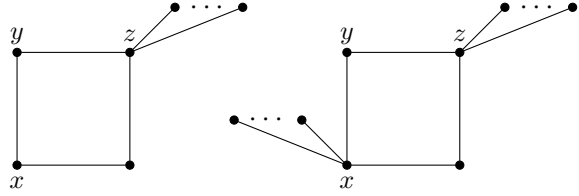
	\begin{proof}
		In Figure \ref{unicyclic4}, a $pc$-partition $\pi$ of order $n-2$ is illustrated for  graphs $G \in \mathcal{D}_1 \cup \mathcal{D}_2$, where $\pi$ consists of the set $\lbrace x,y,z\rbrace$ and $n-3$ singleton sets. Conversely,
let $PC(G)=n-2$, and let $\pi$ be a $PC(G)$-partition. Further, let $C =(v_1,v_2,v_3,v_4)$ be the cycle in $G$. The result can be readily verified for $n=5$, so we assume that $n\geq 6$. Consider two cases.

\textbf{Case 1.} $\pi$ consists of two doubleton sets and $n-4$ singleton sets. Note that no singleton set can form a paired coalition with a doubleton set. Let $\lbrace v\rbrace$ and $\lbrace u\rbrace$ be two singleton sets in $\pi$ that are $pc$-partners. It follows that the edge $uv$ is a dominating edge in $G$, and so $uv \in E(C)$. Suppose, by symmetry, that $u=v_1$ and $v=v_2$. Now we  define $A=N(u) \setminus \lbrace v_2,v_4\rbrace$ and $B=N(v) \setminus \lbrace v_1,v_3\rbrace$. Let $V_1, V_2 \subset \pi$, where $\lvert V_1\rvert =\lvert V_2\rvert =2$. Note that the sets $[A,B]$, $[A,\lbrace v_3,v_4\rbrace]$ and $[B,\lbrace v_3,v_4\rbrace]$ are empty sets, and so $E(G[V \setminus \lbrace v_1,v_2\rbrace])=\lbrace v_3 v_4\rbrace$. Thus, $G[V_1 \cup V_2]$ has no perfect matching, so this case is impossible.

\textbf{Case 2.} $\pi$ consists of a  set of order $3$ and $n-3$ singleton sets. First assume that  there exist two singleton sets in $\pi$ that are $pc$-partners. Let $\lbrace u\rbrace$ and $\lbrace v\rbrace$ be $pc$-partners in $\pi$. From our discussion in the previous case, we may assume that $u=v_1$ and $v=v_2$. Define $A=N(u) \setminus \lbrace v_2,v_4\rbrace$ and $B=N(v) \setminus \lbrace v_1,v_3\rbrace$. Let $\lbrace x,y,z\rbrace \in \pi$. Since the sets $[A,B]$, $[A,\lbrace v_3,v_4\rbrace]$ and $[B,\lbrace v_3,v_4\rbrace]$ are empty sets, it follows from Remark \ref{rem4} that $\lbrace v_3,v_4\rbrace \subset \lbrace x,y,z\rbrace$. By symmetry, we may assume that $\lbrace v_3,v_4\rbrace =\lbrace x,y\rbrace$, and that $z \in A$. Further, it is easy to verify that the edge $v_1 v_2$ is the only dominating edge in $G$, implying that all singleton sets in $\pi \setminus \lbrace \lbrace u\rbrace ,\lbrace v\rbrace \rbrace$ must form a paired coalition with $\lbrace x,y,z\rbrace$. Hence, $B= \emptyset$ and $A=\lbrace z\rbrace$, and so $G \in \mathcal{D}_1$. Now we assume that no two singleton sets in $\pi$ are $pc$-partners, implying that all singleton sets in $\pi$ form a paired coalition with $\lbrace x,y,z\rbrace$. We consider the following subcases.

\textbf{Subcase 2.1.} $\lbrace x,y,z\rbrace \cap V(C) =\emptyset$.
Since each vertex in $\lbrace x,y,z\rbrace$ has at most one neighbor in $V(C)$, we can find a vertex $v \in V(C)$ having no neighbor in $\lbrace x,y,z\rbrace$. Therefore, the sets $\lbrace v\rbrace$ and $\lbrace x,y,z\rbrace$ are not $pc$-partners, a contradiction.

\textbf{Subcase 2.2.} $\lvert \lbrace x,y,z\rbrace \cap V(C)\rvert =1$. By symmetry, we may assume that $z \in V(C)$. If $xy \in E(G)$, then $z$ has no neighbor in $\lbrace x,y\rbrace$, for otherwise, there would be a vertex $v \in V(C)$ having no neighbor in $\lbrace x,y,z\rbrace$. Therefore, if $xy \in E(G)$, then  considering an arbitrary vertex $v \in V(C) \setminus \lbrace z\rbrace$ and a perfect matching $M$ in $G[\lbrace x,y,z,v\rbrace]$, we have $xy \in M$, implying that $V(C) \subseteq N[z]$, a contradiction. Hence, we may assume that $xy \notin E(G)$, implying that $z$ has a neighbor in $\lbrace x,y\rbrace$. By symmetry, we may assume that $xz \in E(G)$. Note that $N_C (x)=\lbrace z\rbrace$. Thus, considering an arbitrary vertex $v \in V(C) \setminus \lbrace z\rbrace$ and a perfect matching $M$ in $G[\lbrace x,y,z,v\rbrace]$, we have $xz \in M$, implying that $V(C)\setminus \lbrace z\rbrace \subseteq N(y)$, which is again a contradiction.

\textbf{Subcase 2.3.} $\lvert \lbrace x,y,z\rbrace \cap V(C)\rvert =2$. By symmetry, we may assume that $x \notin V(C)$. If $x$ has no neighbor in $\lbrace y,z\rbrace$, then we can find a vertex $v \in V(C) \setminus \lbrace y,z\rbrace$ such that $x$ has no neighbor in $\lbrace v,y,z\rbrace$, a contradiction.
Hence, we may assume that $xy \in E(G)$. Note that $N_C (x)=\lbrace y\rbrace$. Let $y=v_1$. Now considering an arbitrary vertex $v \in V(C) \setminus \lbrace y,z\rbrace$ and a perfect matching $M$ in $G[\lbrace x,y,z,v\rbrace]$, we have $xy \in M$, implying that $V(C)\setminus \lbrace y\rbrace \subseteq N[z]$. Hence, $z=v_3$. Further, by Remark \ref{rem4}, $N(v_2)=N(v_4)=\lbrace y,z\rbrace$. Hence, $G \in \mathcal{D}_1 \cup \mathcal{D}_2$.

\textbf{Subcase 2.4.} $\lbrace x,y,z\rbrace \subset V(C)$. By symmetry, we may assume that $N_C (y)=\lbrace x,z\rbrace$. Let $\lbrace v_1\rbrace =V(C) \setminus \lbrace x,y,z\rbrace$.  It is easy to verify that $N(y) =N_C (y)= \lbrace x,z\rbrace$. Further, by Remark \ref{rem4}, $N_G (v_1)=N_C (v_1)$ and $V(G)=V(C) \cup N(x) \cup N(z)$. Hence, $G \in \mathcal{D}_1 \cup \mathcal{D}_2$.
\end{proof}

We define  the families of graphs $\mathcal{E}_1$,$\mathcal{E}_2$, \dots ,$\mathcal{E}_7$  as shown in Figure \ref{unicyclic5}.
 \begin{figure}[!htbp]
	\centering
	\begin{subfigure}{0.32\textwidth}
		\begin{tikzpicture}[scale=.15, transform shape]
			
			\node [draw, shape=circle,fill=black,scale=2] (v1) at  (0,0) {};
			\node [draw, shape=circle,fill=black,scale=2] (v2) at  (10,0) {};
			\node [draw, shape=circle,fill=black,scale=2] (v3) at  (5,9) {};
			
			\node [draw, shape=circle,fill=black,scale=2] (v4) at  (-4,6) {};
			\node [draw, shape=circle,fill=black,scale=2] (v5) at  (-10,6) {};
			\node [scale=6] at (-7,6) {\large $\cdots$};
			\node [draw, shape=circle,fill=black,scale=2] (v6) at  (14,6) {};
			\node [draw, shape=circle,fill=black,scale=2] (v7) at  (20,6) {};
			\node [scale=6] at (17,6) {\large $\cdots$};
			\draw(v1)--(v2);
			\draw(v1)--(v3);
			\draw(v2)--(v3);
			
			\draw(v1)--(v4);
\draw(v1)--(v5);
\draw(v2)--(v6);
\draw(v2)--(v7);			
			
		\end{tikzpicture}
		\caption{The family $\mathcal{E}_1$}
		
	\end{subfigure}
	\begin{subfigure}{0.32\textwidth}
		\begin{tikzpicture}[scale=.15, transform shape]
			
			\node [draw, shape=circle,fill=black,scale=2] (v1) at  (0,0) {};
\node [draw, shape=circle,fill=black,scale=2] (v2) at  (10,0) {};
\node [draw, shape=circle,fill=black,scale=2] (v3) at  (5,9) {};

\node [draw, shape=circle,fill=black,scale=2] (v4) at  (-4,6) {};
\node [draw, shape=circle,fill=black,scale=2] (v5) at  (-10,6) {};
\node [scale=6] at (-7,6) {\large $\cdots$};
\node [draw, shape=circle,fill=black,scale=2] (v6) at  (14,6) {};
\node [draw, shape=circle,fill=black,scale=2] (v7) at  (20,6) {};
\node [scale=6] at (17,6) {\large $\cdots$}; {};
			\node [draw, shape=circle,fill=black,scale=2] (v8) at  (2,15) {};
\node [draw, shape=circle,fill=black,scale=2] (v9) at  (8,15) {};
\node [scale=6] at (5,15) {\large $\cdots$}; {};			
			\draw(v1)--(v2);
			\draw(v1)--(v3);
			\draw(v2)--(v3);
			
			\draw(v1)--(v4);
			\draw(v1)--(v5);
			\draw(v2)--(v6);
			\draw(v2)--(v7);			
			\draw(v3)--(v8);
\draw(v3)--(v9);			
			
		\end{tikzpicture}
		\caption{The family $\mathcal{E}_2$}	
	\end{subfigure}
	\begin{subfigure}{0.32\textwidth}
	\begin{tikzpicture}[scale=.15, transform shape]
		
		\node [draw, shape=circle,fill=black,scale=2] (v1) at  (0,0) {};
		\node [draw, shape=circle,fill=black,scale=2] (v2) at  (10,0) {};
		\node [draw, shape=circle,fill=black,scale=2] (v3) at  (5,9) {};
		
		\node [draw, shape=circle,fill=black,scale=2] (v4) at  (20,0) {};
		\node [draw, shape=circle,fill=black,scale=2] (v5) at  (24,6) {};
		\node [draw, shape=circle,fill=black,scale=2] (v6) at  (30,6) {};
		\node [scale=6] at (27,6) {\large $\cdots$}; {};
		\draw(v1)--(v2);
		\draw(v1)--(v3);
		\draw(v2)--(v3);
		
		\draw(v2)--(v4);
		\draw(v4)--(v5);
		\draw(v4)--(v6);
	\end{tikzpicture}
	\caption{The family $\mathcal{E}_3$}
	
\end{subfigure}
	
	\begin{subfigure}{0.49\textwidth}
	\begin{tikzpicture}[scale=.15, transform shape]
		
		\node [draw, shape=circle,fill=black,scale=2] (v1) at  (0,0) {};
		\node [draw, shape=circle,fill=black,scale=2] (v2) at  (10,0) {};
		\node [draw, shape=circle,fill=black,scale=2] (v3) at  (5,9) {};
		
		\node [draw, shape=circle,fill=black,scale=2] (v4) at  (20,0) {};
		\node [draw, shape=circle,fill=black,scale=2] (v5) at  (24,6) {};
		\node [draw, shape=circle,fill=black,scale=2] (v8) at  (30,6) {};
		\node [scale=6] at (27,6) {\large $\cdots$}; {};
		\node [draw, shape=circle,fill=black,scale=2] (v6) at  (-4,6) {};
		\node [draw, shape=circle,fill=black,scale=2] (v7) at  (-10,6) {};
		\node [scale=6] at (-7,6) {\large $\cdots$}; {};
		\draw(v1)--(v2);
		\draw(v1)--(v3);
		\draw(v2)--(v3);
		
		\draw(v2)--(v4);
		\draw(v4)--(v5);
			\draw(v4)--(v8);
		\draw(v1)--(v6);
		
		\draw(v1)--(v7);
	\end{tikzpicture}
	\caption{The family $\mathcal{E}_4$}
	
\end{subfigure}
	\begin{subfigure}{0.49\textwidth}
	\begin{tikzpicture}[scale=.15, transform shape]
		
		\node [draw, shape=circle,fill=black,scale=2] (v1) at  (0,0) {};
		\node [draw, shape=circle,fill=black,scale=2] (v2) at  (10,0) {};
		\node [draw, shape=circle,fill=black,scale=2] (v3) at  (5,9) {};
		
		\node [draw, shape=circle,fill=black,scale=2] (v4) at  (20,0) {};
		\node [draw, shape=circle,fill=black,scale=2] (v5) at  (30,0) {};
		\node [draw, shape=circle,fill=black,scale=2] (v6) at  (34,6) {};
		\node [draw, shape=circle,fill=black,scale=2] (v7) at  (40,6) {};
		\node [scale=6] at (37,6) {\large $\cdots$}; {};
		\draw(v1)--(v2);
		\draw(v1)--(v3);
		\draw(v2)--(v3);
		
		\draw(v2)--(v4);
		\draw(v4)--(v5);
		\draw(v5)--(v6);
		\draw(v5)--(v7);
	\end{tikzpicture}
	\caption{The family $\mathcal{E}_5$}
	
\end{subfigure}
	\begin{subfigure}{0.49\textwidth}
	\begin{tikzpicture}[scale=.15, transform shape]
		
		\node [draw, shape=circle,fill=black,scale=2] (v1) at  (0,0) {};
		\node [draw, shape=circle,fill=black,scale=2] (v2) at  (10,0) {};
		\node [draw, shape=circle,fill=black,scale=2] (v3) at  (5,9) {};
		
		\node [draw, shape=circle,fill=black,scale=2] (v4) at  (20,0) {};
		\node [draw, shape=circle,fill=black,scale=2] (v5) at  (30,0) {};
		\node [draw, shape=circle,fill=black,scale=2] (v6) at  (14,6) {};
		\node [draw, shape=circle,fill=black,scale=2] (v7) at  (20,6) {};
		\node [scale=6] at (17,6) {\large $\cdots$}; {};
		\draw(v1)--(v2);
		\draw(v1)--(v3);
		\draw(v2)--(v3);
		
		\draw(v2)--(v4);
		\draw(v4)--(v5);
		\draw(v2)--(v6);
		\draw(v2)--(v7);
	\end{tikzpicture}
	\caption{The family $\mathcal{E}_6$}
	
\end{subfigure}
	\begin{subfigure}{0.49\textwidth}
	\begin{tikzpicture}[scale=.15, transform shape]
		
		\node [draw, shape=circle,fill=black,scale=2] (v1) at  (0,0) {};
		\node [draw, shape=circle,fill=black,scale=2] (v2) at  (10,0) {};
		\node [draw, shape=circle,fill=black,scale=2] (v3) at  (5,9) {};
		
		\node [draw, shape=circle,fill=black,scale=2] (v4) at  (20,0) {};
		\node [draw, shape=circle,fill=black,scale=2] (v5) at  (30,0) {};
		\node [draw, shape=circle,fill=black,scale=2] (v6) at  (14,6) {};
		\node [draw, shape=circle,fill=black,scale=2] (v7) at  (20,6) {};
			\node [scale=6] at (17,6) {\large $\cdots$}; {};
				\node [draw, shape=circle,fill=black,scale=2] (v8) at  (34,6) {};
		\node [draw, shape=circle,fill=black,scale=2] (v9) at  (40,6) {};
			\node [scale=6] at (37,6) {\large $\cdots$}; {};
		\draw(v1)--(v2);
		\draw(v1)--(v3);
		\draw(v2)--(v3);
		
		\draw(v2)--(v4);
		\draw(v4)--(v5);
		\draw(v2)--(v6);
		\draw(v2)--(v7);
		\draw(v5)--(v8);
\draw(v5)--(v9);
	\end{tikzpicture}
	\caption{The family $\mathcal{E}_7$}
	
\end{subfigure}
\caption{Unicyclic graphs $G$ with $g(G)=3$ and $PC(G)=n-2$}\label{unicyclic5}
\end{figure}
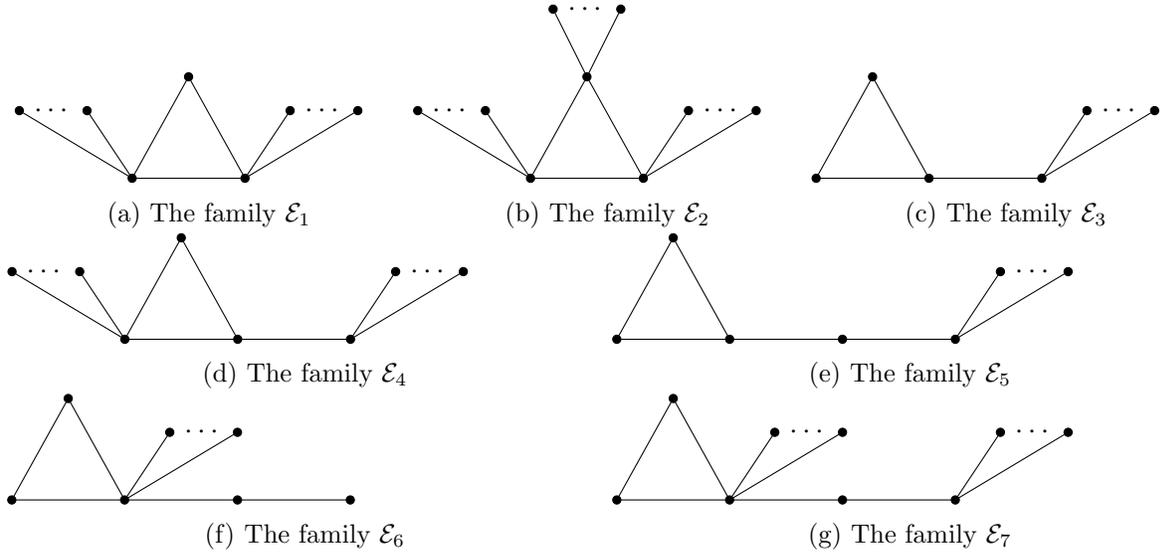
		\begin{lemma} \label{girth3}
	Let $G$ be a unicyclic graph of order $n$ with $g(G)=3$. Then $PC(G)=n-2$ if and only if $G \in \bigcup_{i=1} ^{7} \mathcal{E}_i$.
\end{lemma}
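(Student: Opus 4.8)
The plan is to follow the template of Theorem~\ref{girth5} and Lemma~\ref{girth4}, after first recording a simplification special to girth $3$. A unicyclic graph with $\delta(G)\ge 2$ is a cycle, and the only girth-$3$ cycle is $C_3$, for which $PC(C_3)=3\neq n-2$ by Theorem~\ref{pc-cycle}; hence every girth-$3$ unicyclic graph with $PC(G)=n-2$ has a leaf, so $\delta(G)=1$, $n\ge 5$, and $PC(G)=n-2\ge 3$. Corollary~\ref{coro-all} then provides the member $S$ of $\pi$ that contains all support vertices and is a $pc$-partner of every other member. Moreover, splitting $n$ vertices into $n-2$ parts leaves an excess of $2$, so $\pi$ consists either of two doubletons and $n-4$ singletons (Case~1) or of one triple and $n-3$ singletons (Case~2).

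For the forward implication I would, for each family $\mathcal{E}_1,\dots,\mathcal{E}_7$, exhibit (as was done for the previous girth values) the $pc$-partition of order $n-2$ whose unique non-singleton member is a suitable triple $\{x,y,z\}$, and verify the three requirements: no member is a paired dominating set, the set $\{x,y,z\}\cup\{w\}$ both dominates $G$ and induces a perfect matching for every singleton $\{w\}$, and no further coalition raises the order. Once the labelling of $x,y,z$ is fixed this is routine.

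For the converse, Case~1 is impossible: regardless of its size, $S$ must be a $pc$-partner of some member $T$ of opposite parity (a singleton when $|S|=2$, a doubleton when $|S|=1$), whence $G[S\cup T]$ has odd order and no perfect matching. In Case~2 the set $S$ is either the triple or a singleton. If $S=\{s\}$, then $s$ is the unique support vertex and, by Corollary~\ref{coro-all}, every other member must coalesce with $\{s\}$; forcing $sw$ to be a dominating edge for each remaining singleton $\{w\}$ drives $s$ to be a full vertex (if some leaf of $s$ were a singleton this is immediate, and otherwise the triple cannot admit a perfect matching with $s$, while two singleton neighbours of $s$ sharing an outside vertex would close a second cycle), so $PC(G)=n$ by Observation~\ref{obs-full}, a contradiction. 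Hence $S=\{x,y,z\}$ is the triple and every singleton is its $pc$-partner.

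It then remains to locate $\{x,y,z\}$ relative to the triangle $C=(v_1,v_2,v_3)$, splitting on $|\{x,y,z\}\cap V(C)|\in\{0,1,2,3\}$ as in Theorem~\ref{girth5} and using Observations~\ref{unio1} and~\ref{unio2} together with the matching and domination requirements for each singleton partner. The value $0$ is excluded because matching the three triangle vertices into the core would close a cycle of length $4$ or $5$ through the core and $C$, contradicting unicyclicity; the value $3$ (so $\{x,y,z\}=V(C)$) forces every vertex off $C$ to be a leaf of $C$, with leaves on at least two of $v_1,v_2,v_3$ since a single loaded triangle vertex would be full, yielding $\mathcal{E}_1\cup\mathcal{E}_2$; and the value $2$ forces the off-triangle core vertex to be a support whose remaining neighbours are leaves and permits pendant leaves on just one of the two core triangle vertices, yielding $\mathcal{E}_3\cup\mathcal{E}_4$. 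The crux is the value $1$: here the core induces a path with middle vertex $x$ hinged at a triangle vertex $z$, and one must show that leaves may hang only from the two ends $z$ and $y$ (a leaf on $x$ would destroy every perfect matching of $G[\{x,y,z,w\}]$), that no branch can be longer without creating a second cycle, and that the three admissible leaf-placements on $\{z,y\}$ give exactly $\mathcal{E}_5,\mathcal{E}_6,\mathcal{E}_7$. The main obstacle is organizing this last subcase and then reconciling the overlaps among the subcases: several small graphs, such as the single-leaf members of $\mathcal{E}_3$, admit $PC(G)$-partitions realizing more than one subcase, so one must argue that the subcases cover, rather than partition, the seven families.
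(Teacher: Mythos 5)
Your overall architecture (reduce to the one-triple-plus-singletons case, then locate the triple relative to the triangle) matches the paper's, and your parity argument disposing of the two-doubleton case via Corollary \ref{coro-all} is in fact cleaner than the paper's. But the step on which all of your Case~2 rests --- that the support-containing member $S$ cannot be a singleton because $s$ would then be forced to be a full vertex --- is false. Take $G$ to be the triangle $v_1v_2v_3$ with a pendant path $v_2,a,b$ attached at $v_2$ (the smallest member of $\mathcal{E}_3$, $n=5$). Here $a$ is the unique support vertex, and $\pi=\lbrace \lbrace a\rbrace ,\lbrace v_2\rbrace ,\lbrace v_1,v_3,b\rbrace \rbrace$ is a $PC(G)$-partition of order $n-2=3$: the edge $av_2$ dominates $G$, the set $\lbrace a,v_1,v_3,b\rbrace$ dominates $G$ and carries the perfect matching $\lbrace ab, v_1v_3\rbrace$, and no member is itself a paired dominating set. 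So $S=\lbrace a\rbrace$ is a singleton and $a$ is not full. Your fullness argument breaks exactly when there is at most one singleton besides $s$ (here $n-4=1$): with a single singleton partner $w$, the requirement that $\lbrace s,w\rbrace$ dominate $G$ lets $w$ carry part of the domination, and the perfect matching of $G[\lbrace s\rbrace \cup \lbrace x,y,z\rbrace]$ only forces $s$ to be adjacent to \emph{one} of $x,y,z$. Consequently your proof, applied to this graph and this partition, ``derives'' a contradiction from a consistent situation, and the subsequent subcase analysis (which presumes every singleton is a $pc$-partner of the triple) does not cover all $PC(G)$-partitions.

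To repair this you must either analyze the singleton-$S$ configuration directly and show it still forces $G\in \bigcup_{i=1}^{7}\mathcal{E}_i$ (compare Lemma \ref{girth4}, where the analogous configuration is treated explicitly and produces the family $\mathcal{D}_1$), or prove that whenever $PC(G)=n-2$ one may \emph{choose} a $PC(G)$-partition whose support-containing member is the triple; neither is routine, and the paper's own one-sentence dismissal of this point is the weakest part of its proof as well. Two smaller matters: your assertion that $n\geq 5$ needs the (easy) verification that the triangle with one pendant vertex ($n=4$) has $PC\neq 2$; and your closing observation that the subcases cover rather than partition the seven families is correct and harmless --- the same overlap occurs in the paper --- so it should be recorded as a feature of the argument rather than treated as an obstacle.
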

 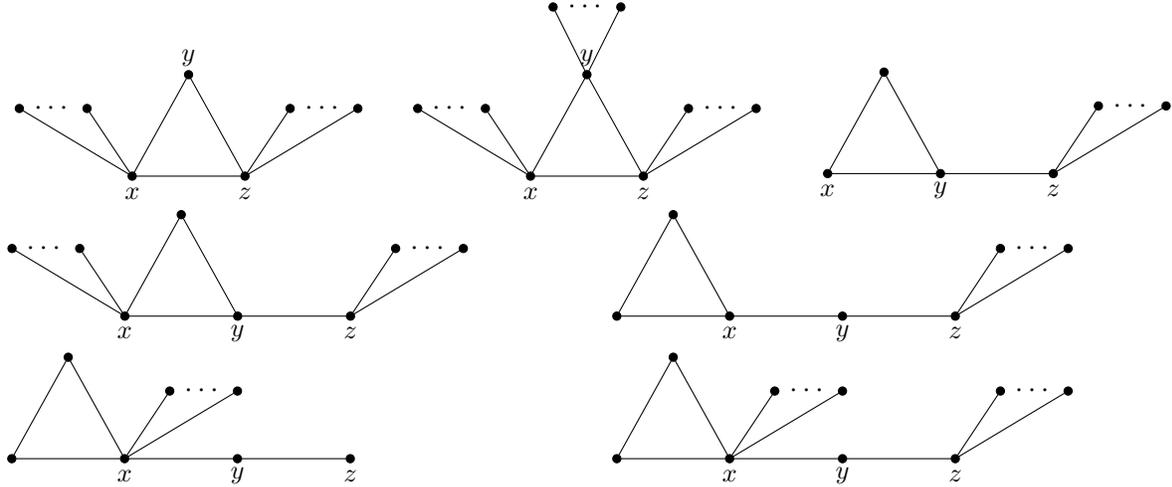
\begin{figure}[!htbp]
	\centering
	\begin{subfigure}{0.32\textwidth}
		\begin{tikzpicture}[scale=.15, transform shape]
			\node [draw, shape=circle,fill=black,scale=2] (v1) at  (0,0) {};
\node [draw, shape=circle,fill=black,scale=2] (v2) at  (10,0) {};
\node [draw, shape=circle,fill=black,scale=2] (v3) at  (5,9) {};

\node [draw, shape=circle,fill=black,scale=2] (v4) at  (-4,6) {};
\node [draw, shape=circle,fill=black,scale=2] (v5) at  (-10,6) {};
\node [scale=6] at (-7,6) {\large $\cdots$};
\node [draw, shape=circle,fill=black,scale=2] (v6) at  (14,6) {};
\node [draw, shape=circle,fill=black,scale=2] (v7) at  (20,6) {};
\node [scale=6] at (17,6) {\large $\cdots$};
			\node [scale=6] at (0,-1.5) {$x$};
			\node [scale=6] at (5,10.5) {$y$};
			\node [scale=6] at (10,-1.5) {$z$};
			\draw(v1)--(v2);
			\draw(v1)--(v3);
			\draw(v2)--(v3);
			
			\draw(v1)--(v4);
			\draw(v1)--(v5);
			\draw(v2)--(v6);
			\draw(v2)--(v7);			
			
		\end{tikzpicture}
		
	\end{subfigure}
	\begin{subfigure}{0.32\textwidth}
		\begin{tikzpicture}[scale=.15, transform shape]
			\node [draw, shape=circle,fill=black,scale=2] (v1) at  (0,0) {};
\node [draw, shape=circle,fill=black,scale=2] (v2) at  (10,0) {};
\node [draw, shape=circle,fill=black,scale=2] (v3) at  (5,9) {};

\node [draw, shape=circle,fill=black,scale=2] (v4) at  (-4,6) {};
\node [draw, shape=circle,fill=black,scale=2] (v5) at  (-10,6) {};
\node [scale=6] at (-7,6) {\large $\cdots$};
\node [draw, shape=circle,fill=black,scale=2] (v6) at  (14,6) {};
\node [draw, shape=circle,fill=black,scale=2] (v7) at  (20,6) {};
\node [scale=6] at (17,6) {\large $\cdots$}; {};
\node [draw, shape=circle,fill=black,scale=2] (v8) at  (2,15) {};
\node [draw, shape=circle,fill=black,scale=2] (v9) at  (8,15) {};
\node [scale=6] at (5,15) {\large $\cdots$}; {};
			\node [scale=6] at (0,-1.5) {$x$};
\node [scale=6] at (5,10.5) {$y$};
\node [scale=6] at (10,-1.5) {$z$};			
			\draw(v1)--(v2);
			\draw(v1)--(v3);
			\draw(v2)--(v3);
			
			\draw(v1)--(v4);
			\draw(v1)--(v5);
			\draw(v2)--(v6);
			\draw(v2)--(v7);			
			\draw(v3)--(v8);
			\draw(v3)--(v9);			
			
		\end{tikzpicture}
	\end{subfigure}
	\begin{subfigure}{0.32\textwidth}
		\begin{tikzpicture}[scale=.15, transform shape]
				\node [draw, shape=circle,fill=black,scale=2] (v1) at  (0,0) {};
		\node [draw, shape=circle,fill=black,scale=2] (v2) at  (10,0) {};
		\node [draw, shape=circle,fill=black,scale=2] (v3) at  (5,9) {};
		
		\node [draw, shape=circle,fill=black,scale=2] (v4) at  (20,0) {};
		\node [draw, shape=circle,fill=black,scale=2] (v5) at  (24,6) {};
		\node [draw, shape=circle,fill=black,scale=2] (v6) at  (30,6) {};
		\node [scale=6] at (27,6) {\large $\cdots$}; {};
				\node [scale=6] at (0,-1.5) {$x$};

	\node [scale=6] at (10,-1.5) {$y$};	
		\node [scale=6] at (20,-1.5) {$z$};
			\draw(v1)--(v2);
			\draw(v1)--(v3);
			\draw(v2)--(v3);
			
			\draw(v2)--(v4);
			\draw(v4)--(v5);
			\draw(v4)--(v6);
		\end{tikzpicture}
		
	\end{subfigure}

	\begin{subfigure}{0.49\textwidth}
		\begin{tikzpicture}[scale=.15, transform shape]
			
		\node [draw, shape=circle,fill=black,scale=2] (v1) at  (0,0) {};
\node [draw, shape=circle,fill=black,scale=2] (v2) at  (10,0) {};
\node [draw, shape=circle,fill=black,scale=2] (v3) at  (5,9) {};

\node [draw, shape=circle,fill=black,scale=2] (v4) at  (20,0) {};
\node [draw, shape=circle,fill=black,scale=2] (v5) at  (24,6) {};
\node [draw, shape=circle,fill=black,scale=2] (v8) at  (30,6) {};
\node [scale=6] at (27,6) {\large $\cdots$}; {};
\node [draw, shape=circle,fill=black,scale=2] (v6) at  (-4,6) {};
\node [draw, shape=circle,fill=black,scale=2] (v7) at  (-10,6) {};
\node [scale=6] at (-7,6) {\large $\cdots$}; {};
			\node [scale=6] at (0,-1.5) {$x$};

\node [scale=6] at (10,-1.5) {$y$};	
\node [scale=6] at (20,-1.5) {$z$};		
			\draw(v1)--(v2);
			\draw(v1)--(v3);
			\draw(v2)--(v3);
			
			\draw(v2)--(v4);
			\draw(v4)--(v5);
			\draw(v4)--(v8);
			\draw(v1)--(v6);
			
			\draw(v1)--(v7);
		\end{tikzpicture}
		
	\end{subfigure}
	\begin{subfigure}{0.49\textwidth}
		\begin{tikzpicture}[scale=.15, transform shape]
			
		\node [draw, shape=circle,fill=black,scale=2] (v1) at  (0,0) {};
\node [draw, shape=circle,fill=black,scale=2] (v2) at  (10,0) {};
\node [draw, shape=circle,fill=black,scale=2] (v3) at  (5,9) {};

\node [draw, shape=circle,fill=black,scale=2] (v4) at  (20,0) {};
\node [draw, shape=circle,fill=black,scale=2] (v5) at  (30,0) {};
\node [draw, shape=circle,fill=black,scale=2] (v6) at  (34,6) {};
\node [draw, shape=circle,fill=black,scale=2] (v7) at  (40,6) {};
\node [scale=6] at (37,6) {\large $\cdots$}; {};
			\node [scale=6] at (10,-1.5) {$x$};

\node [scale=6] at (20,-1.5) {$y$};	
\node [scale=6] at (30,-1.5) {$z$};	
			\draw(v1)--(v2);
			\draw(v1)--(v3);
			\draw(v2)--(v3);
			
			\draw(v2)--(v4);
			\draw(v4)--(v5);
			\draw(v5)--(v6);
			\draw(v5)--(v7);
		\end{tikzpicture}
		
	\end{subfigure}
	\begin{subfigure}{0.49\textwidth}
		\begin{tikzpicture}[scale=.15, transform shape]
			
		\node [draw, shape=circle,fill=black,scale=2] (v1) at  (0,0) {};
\node [draw, shape=circle,fill=black,scale=2] (v2) at  (10,0) {};
\node [draw, shape=circle,fill=black,scale=2] (v3) at  (5,9) {};

\node [draw, shape=circle,fill=black,scale=2] (v4) at  (20,0) {};
\node [draw, shape=circle,fill=black,scale=2] (v5) at  (30,0) {};
\node [draw, shape=circle,fill=black,scale=2] (v6) at  (14,6) {};
\node [draw, shape=circle,fill=black,scale=2] (v7) at  (20,6) {};
\node [scale=6] at (17,6) {\large $\cdots$}; {};
			\node [scale=6] at (10,-1.5) {$x$};

\node [scale=6] at (20,-1.5) {$y$};	
\node [scale=6] at (30,-1.5) {$z$};			
			\draw(v1)--(v2);
			\draw(v1)--(v3);
			\draw(v2)--(v3);
			
			\draw(v2)--(v4);
			\draw(v4)--(v5);
			\draw(v2)--(v6);
			\draw(v2)--(v7);
		\end{tikzpicture}
		
	\end{subfigure}
	\begin{subfigure}{0.49\textwidth}
		\begin{tikzpicture}[scale=.15, transform shape]
		\node [draw, shape=circle,fill=black,scale=2] (v1) at  (0,0) {};
\node [draw, shape=circle,fill=black,scale=2] (v2) at  (10,0) {};
\node [draw, shape=circle,fill=black,scale=2] (v3) at  (5,9) {};

\node [draw, shape=circle,fill=black,scale=2] (v4) at  (20,0) {};
\node [draw, shape=circle,fill=black,scale=2] (v5) at  (30,0) {};
\node [draw, shape=circle,fill=black,scale=2] (v6) at  (14,6) {};
\node [draw, shape=circle,fill=black,scale=2] (v7) at  (20,6) {};
\node [scale=6] at (17,6) {\large $\cdots$}; {};
\node [draw, shape=circle,fill=black,scale=2] (v8) at  (34,6) {};
\node [draw, shape=circle,fill=black,scale=2] (v9) at  (40,6) {};
\node [scale=6] at (37,6) {\large $\cdots$}; {};
			\node [scale=6] at (10,-1.5) {$x$};

\node [scale=6] at (20,-1.5) {$y$};	
\node [scale=6] at (30,-1.5) {$z$};			
			\draw(v1)--(v2);
			\draw(v1)--(v3);
			\draw(v2)--(v3);
			
			\draw(v2)--(v4);
			\draw(v4)--(v5);
			\draw(v2)--(v6);
			\draw(v2)--(v7);
			\draw(v5)--(v8);
			\draw(v5)--(v9);
		\end{tikzpicture}
		
	\end{subfigure}
	\caption{$pc$-partitions of order $n-2$}\label{unicyclic6}
\end{figure}
\begin{proof}
	In Figure \ref{unicyclic6}, a $pc$-partition $\pi$ of order $n-2$ is illustrated for  graphs $G \in \bigcup_{i=1} ^{7} \mathcal{E}_i$, where $\pi$ consists of the set $\lbrace x,y,z\rbrace$ and $n-3$ singleton sets. Conversely,
let $\pi$ be a $PC(G)$-partition, and let $C =(v_1,v_2,v_3)$ be the cycle in $G$. We consider two cases.

\textbf{Case 1.} $\pi$ consists of two doubleton sets and $n-4$ singleton sets. If $n=5$, then the singleton set cannot form a paired coalition with any other sets, so we may assume that $n\geq 6$. Note that no singleton set can form a paired coalition with a doubleton set. Let $\lbrace v\rbrace$ and $\lbrace u\rbrace$ be two singleton sets in $\pi$ that are $pc$-partners. It follows that the edge $uv$ is a dominating edge in $G$, and so $\lbrace u,v\rbrace \cap V(C) \neq \emptyset$. Assume first that $uv \in E(C)$. Let $u=v_1$ and $v=v_2$.  Now we  define $A=N(u) \setminus \lbrace v_2,v_3\rbrace$ and $B=N(v) \setminus \lbrace v_1,v_3\rbrace$. Let $V_1, V_2 \subset \pi$, where $\lvert V_1\rvert =\lvert V_2\rvert =2$. Note that $E(G[V \setminus \lbrace v_1,v_2\rbrace])=\emptyset$, which contradicts the fact that $V_1$ and $V_2$ are $pc$-partners. Hence, $uv \notin E(C)$. Thus, we may assume, by symmetry, that $\lbrace u,v\rbrace \cap V(C) = \lbrace u\rbrace$. 
	Let $u=v_1$. Now we  define $A=N(u) \setminus \lbrace v\rbrace$ and $B=N(v) \setminus \lbrace u\rbrace$. Let $V_1, V_2 \subset \pi$, where $\lvert V_1\rvert =\lvert V_2\rvert =2$. Note that $E(G[A \cup B])= \lbrace v_2 v_3\rbrace$, which contradicts the fact that $V_1$ and $V_2$ are $pc$-partners. Hence, this case is impossible

\textbf{Case 2.} $\pi$ consists of a  set of order $3$ and $n-3$ singleton sets. An argument similar to the one used in the previous case shows that each singleton set must form a paired coalition with the set of order $3$. Define graph $G^\prime$, with $V(G^\prime)=V(G)$ and $E(G^\prime)=E(G)\setminus \lbrace v_1v_2 , v_1v_3  , v_2v_3\rbrace$, and let $T_1$,$T_2$ and $T_3$ denote the connected components of $G^\prime$ such that $v_1 \in V(T_1),$ $v_2 \in V(T_2)$ and $v_3 \in V(T_3)$.  Now we consider the following subcases.

\textbf{Subcase 2.1.} $\lbrace x,y,z\rbrace \cap V(C) =\emptyset$.
By Remark \ref{rem4}, $E(G[\lbrace x,y,z\rbrace]) \neq \emptyset$. By symmetry, we may assume that $xy \in E(T_1)$. Now if $z \in V(T_1)$, then we can find a vertex in $V(C)$ having no neighbor in $\lbrace x,y,z\rbrace$. Hence, we may assume, by symmetry, that $z \in V(T_2)$. Then, again we can find a vertex in $V(C)$ having no neighbor in $\lbrace x,y,z\rbrace$. Hence, this subcase is impossible.

\textbf{Subcase 2.2.} $\lvert \lbrace x,y,z\rbrace \cap V(C)\rvert =1$. By symmetry, we may assume that $z \in V(C)$. Let $z=v_1$. If $z$ has no neighbor in $\lbrace x,y\rbrace$, then $xy \in E(G)$. In this case, considering an arbitrary vertex $v \in V(G) \setminus \lbrace x,y,z\rbrace$ and a perfect matching $M$ in $G[\lbrace x,y,z,v\rbrace]$, we have $xy \in M$, implying that $N[z]=V(G)\setminus \lbrace x,y\rbrace$. Hence, $G \in \mathcal{E}_3 \cup \mathcal{E}_4 \cup \mathcal{E}_7$.  Otherwise, we may assume, by symmetry, that $xz \in E(G)$. Now if $y \notin V(T_1)$, then considering an arbitrary vertex $v \in V(G) \setminus \lbrace x,y,z\rbrace$ and a perfect matching $M$ in $G[\lbrace x,y,z,v\rbrace]$, we have $xz \in M$, implying that $N[y]=V(G)\setminus \lbrace x,z\rbrace$, which is impossible. Hence, $y \in V(T_1)$. Now considering a perfect matching $M$ in $G[\lbrace x,y,z,v_2\rbrace]$, we have $v_2 z \in M$, implying that $xy \in E(G)$. Further, it is easy to verify that $N(v_2)=N_C (v_2)$, $N(v_3)=N_C (v_3)$ and $N(x)=\lbrace y,z\rbrace$. Hence, $G\in \mathcal{E}_3 \cup \mathcal{E}_5 \cup \mathcal{E}_6 \cup \mathcal{E}_7$.

\textbf{Subcase 2.3.} $\lvert \lbrace x,y,z\rbrace \cap V(C)\rvert =2$. By symmetry, we may assume that $x \notin V(C)$. Let $\lbrace y,z\rbrace =\lbrace v_1,v_2\rbrace$. If $x$ has no neighbor in $\lbrace y,z\rbrace$, then considering an arbitrary vertex $v \in V(G) \setminus \lbrace x,y,z\rbrace$ and a perfect matching $M$ in $G[\lbrace x,y,z,v\rbrace]$, we have $yz \in M$, implying that $N[x]=V(G)\setminus \lbrace y,z\rbrace$. Note that $N(x)\setminus \lbrace v_3\rbrace \neq \emptyset$, for otherwise we would have $PC(G)=n$. Hence, $G \in \mathcal{E}_3$. Otherwise, we may assume, by symmetry, that $xy \in E(G)$. Then it is easy to verify that $N(v_3)=\lbrace y,z\rbrace$, and that $N(y)=\lbrace x,z,v_3\rbrace$. Hence, $G \in \mathcal{E}_1 \cup \mathcal{E}_3 \cup \mathcal{E}_4$.

\textbf{Subcase 2.4.} $V(C)=\lbrace x,y,z\rbrace$. It follows from Remark \ref{rem4} that each vertex in $V(G) \setminus V(C)$ must have a neighbor in $\lbrace x,y,z\rbrace$. Hence, $G \in \mathcal{E}_1 \cup \mathcal{E}_2$. This completes the proof.
\end{proof}
Using Theorems \ref{girth6} and \ref{girth5}, and Lemmas \ref{girth4} and \ref{girth3}, we have the following.
\begin{corollary}
	Let $G$ be a unicyclic graph of order $n$. Then $PC(G)=n-2$ if and only if $G \in (\bigcup_{i=1} ^{7} \mathcal{E}_1) \cup \mathcal{B} \cup \mathcal{B}_1 \cup \mathcal{B}_2 \cup \mathcal{D}_1 \cup \mathcal{D}_2 $.
\end{corollary}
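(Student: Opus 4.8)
The plan is to reduce the statement to the four girth-based results already established. The crucial structural fact is that a unicyclic graph contains exactly one cycle, so its girth $g(G)$ equals the length of that unique cycle and satisfies $g(G) \geq 3$. I would therefore split the argument into the four exhaustive cases $g(G) = 3$, $g(G) = 4$, $g(G) = 5$, and $g(G) \geq 6$, and in each case invoke the corresponding characterization to read off both directions of the equivalence at once.

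First I would dispose of the range $g(G) \geq 6$. By Theorem \ref{girth6} we have $PC(G) < n-2$, so no unicyclic graph of girth at least six satisfies $PC(G) = n-2$. I would also note that every graph appearing in the listed families is built on a cycle of length $3$, $4$, or $5$, hence has girth at most five; so a unicyclic $G$ with $g(G) \geq 6$ lies in none of these families. Thus both sides of the claimed equivalence are false, and it holds vacuously in this range.

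For the three remaining girth values I would apply the matching results directly: Lemma \ref{girth3} gives $PC(G) = n-2 \iff G \in \bigcup_{i=1}^{7} \mathcal{E}_i$ when $g(G) = 3$, Lemma \ref{girth4} gives $PC(G) = n-2 \iff G \in \mathcal{D}_1 \cup \mathcal{D}_2$ when $g(G) = 4$, and Theorem \ref{girth5} gives $PC(G) = n-2 \iff G \in \mathcal{B} \cup \mathcal{B}_1 \cup \mathcal{B}_2$ when $g(G) = 5$ (the hypothesis of Theorem \ref{girth5} is stated for general graphs, but since the families $\mathcal{B}, \mathcal{B}_1, \mathcal{B}_2$ are themselves unicyclic, it specializes cleanly to our setting). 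Assembling the three positive cases together with the vacuous high-girth case yields precisely the union $(\bigcup_{i=1}^{7} \mathcal{E}_i) \cup \mathcal{B} \cup \mathcal{B}_1 \cup \mathcal{B}_2 \cup \mathcal{D}_1 \cup \mathcal{D}_2$ claimed in the corollary.

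The only point requiring care, and the step I expect to be the main (though minor) obstacle, is verifying that the girth partitions the families cleanly, so that the separate results do not interfere. Each $\mathcal{E}_i$ consists of graphs with a single triangle ($g = 3$), each $\mathcal{D}_j$ of graphs built on a $4$-cycle ($g = 4$), and $\mathcal{B}, \mathcal{B}_1, \mathcal{B}_2$ of graphs built on a $5$-cycle ($g = 5$); these girth values are pairwise distinct. Consequently, for the forward (``if'') direction each listed graph is unicyclic of a known girth and attains $PC(G) = n-2$ by the one cited result governing that girth, while for the reverse (``only if'') direction any unicyclic $G$ with $PC(G) = n-2$ has a well-defined girth in $\{3,4,5\}$ (the case $g(G)\geq 6$ being excluded by Theorem \ref{girth6}) and is forced into exactly the family dictated by that girth. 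The disjointness of the girth values guarantees membership in the union is equivalent to membership in the single family matching $g(G)$, completing the proof.
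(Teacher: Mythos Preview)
Your proposal is correct and follows essentially the same approach as the paper: the corollary is stated immediately after the four girth-based results and the paper simply writes ``Using Theorems \ref{girth6} and \ref{girth5}, and Lemmas \ref{girth4} and \ref{girth3}, we have the following,'' which is exactly the case split by girth that you carry out in detail. Your additional remarks about the disjointness of the families by girth are a welcome explicit verification of a point the paper leaves implicit.
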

	\section{Future research}
We conclude the paper with some research problems:
\begin{p}
	Characterize the trees admitting a paired coalition partition.
\end{p}
\begin{p}
	Characterize the trees attaining the bound of  Theorem \ref{tree-bound}.
\end{p}
\begin{p}
	Characterize all triangle-free graphs $G$ with $PC(G)=n-2$.
\end{p}
\begin{p}
A recent paper by Alikhani et al. discuss the concept of connected coalitions. It is intriguing to study paired Connected coalitions. What graphs admit a paired coalition partition that is also connected? That is, can we partition a graph into sets $\lbrace V_1,V_2,\dots ,V_k\rbrace$ such that neither of these sets are paired connected dominating sets, but $V_i$  can form a coalition with $V_j$  such that the set $V_i \cup V_j$  is; i) a dominating set, ii) a paired dominating set, and iii) the subgraph $G[V_i \cup V_j]$ is connected.	
\end{p}
\begin{p}
Can we study paired coalitions on cactuses?	
\end{p}
\begin{p}
Can we develop algorithms that can take a graph as input, and give you a paired coalition partition of maximum order $k$? Can these algorithms run in polynomial time, or which complexity classes do computations of paired coalition partitions on certain graph classes belong to?	
\end{p}

\begin{flushleft}
	\textbf{{\large Conflicts of interest}}\vspace{-3.5mm}
\end{flushleft}
The authors declare that they have no conflict of interest.

\end{document}